\numberwithin{equation}{section}
\newtheorem{Theorem}{Theorem}[section]
\newtheorem*{Theorem*}{Theorem}
\newtheorem{Lemma}[Theorem]{Lemma}
\theoremstyle{definition}
	\newtheorem{Definition}[Theorem]{Definition}
	\newtheorem{Example}[Theorem]{Example}
	\newtheorem{Remark}[Theorem]{Remark} }
\newcommand{\Real}{\mathbb R}
\newcommand{\N}{\mathbb N}
\newcommand{\ddbar}{\overline\partial}
\newcommand{\pr}{\partial}
\newcommand{\ol}{\overline}
\newcommand{\norm}[1]{\left\Vert#1\right\Vert}
\newcommand{\abs}[1]{\left\vert#1\right\vert}
\newcommand{\set}[1]{\left\{#1\right\}}
\newcommand{\To}{\rightarrow}
\begin{document}
	
\allowdisplaybreaks

\newcommand{\arXivNumber}{2410.03322}

\renewcommand{\PaperNumber}{048}

\FirstPageHeading

\ShortArticleName{Strict Quantization for Compact Pseudo-K\"ahler Manifolds and Group Actions}

\ArticleName{Strict Quantization for Compact Pseudo-K\"ahler\\ Manifolds and Group Actions}

\Author{Andrea GALASSO}

\AuthorNameForHeading{A.~Galasso}

\Address{Dipartimento di Matematica e Applicazioni, Universit\`a degli Studi di Milano Bicocca, \\ Via R.~Cozzi~55, 20125 Milano, Italy}
\Email{\href{mailto:andrea.galasso@unimib.it}{andrea.galasso@unimib.it}}

\ArticleDates{Received January 29, 2025, in final form June 17, 2025; Published online June 25, 2025}
	
\Abstract{The asymptotic results for Berezin--Toeplitz operators yield a strict quantization for the algebra of smooth functions on a given Hodge manifold. It seems natural to generalize this picture for quantizable pseudo-K\"ahler manifolds in presence of a group action. Thus, in this setting we introduce a Berezin transform which has a complete asymptotic expansion on the preimage of the zero set of the moment map. It leads in a natural way to prove that certain quantization maps are strict.}
	
\Keywords{CR manifolds; Toeplitz operators; star products; group actions}
	
\Classification{32V20; 32A25; 53D50}

\section{Introduction}
	
This paper is a continuation of \cite{gh}. Here, we further investigate the properties of the star product induced by Toeplitz operators, with a focus on establishing conditions under which certain quantization maps are strict. The present paper aims to extend known results for K\"ahler manifolds to the setting of pseudo-K\"ahler manifolds equipped with a group action. We~emphasize that the purpose of this introduction is to provide a broad overview of the results.
	
	 Consider a compact connected complex manifold $(M, J)$ of real dimension $2n$, where $J$ is an integrable complex structure. Let $\omega$ be a real, non-degenerate closed $2$-form on $M$, and assume that it satisfies the compatibility condition
		$
		\omega(JX, JY) = \omega(X, Y)$, $ \forall X,Y \in TM$.
		In this case, the triple $(M, \omega, J)$ is called a \emph{pseudo-K\"ahler manifold}. Unlike in the standard K\"ahler case, we do not assume that $\omega$ is positive definite. Instead, $\omega$ has constant signature $(n_-, n_+)$, meaning that at each point $m \in M$, the symmetric bilinear form $g_m(X,Y) := \omega_m(X, JY)$ has signature~${(n_-, n_+)}$.
		
		Let $(L, h_L)$ be a Hermitian line bundle over $M$, endowed with a Hermitian connection $\nabla^L$ whose curvature $R^L$ satisfies
		$
		R^L = -2\pi {\rm i} \omega$.

		\begin{Definition}
			For each positive integer $k$, the \emph{quantization space} $\mathcal{Q}_k(M)$ of $(M, \omega)$ with respect to the polarization $J$ is defined as the space of harmonic $(0,n_-)$-forms with values in the $k$-th tensor power $L^{\otimes k}$ of $L$,
\[
			\mathcal{Q}_k(M) := \ker \square^{(n_-)}_k,
\]
			where \smash{$\square^{(n_-)}_k$} is the Kodaira Laplacian acting on $L^{\otimes k}$-valued $(0,n_-)$-forms.
		\end{Definition}
		
		In the special case when $n_- = 0$ (i.e., when $g$ is positive definite), this construction recovers the classical geometric quantization: $\mathcal{Q}_k(M)$ coincides with the space $H^0\bigl(M, L^{\otimes k}\bigr)$ of holomorphic sections. Hence, this framework generalizes the K\"ahler case to pseudo-K\"ahler manifolds with indefinite metrics. There is a natural inner product on $\mathcal{Q}_k(M)$, we refer to Section~\ref{sec:for} for the definition and for more details.
		
		Let $\{\cdot ,\cdot\}$ denote the Poisson bracket on $C^\infty(M)$ induced by the symplectic form $\omega$.
		We now recall the definition of a star product.
	
	\begin{Definition} \label{def:star}
		A star product for the algebra $C^{\infty}(M)$ is given by the formal power series
		\[g\star h= \sum_{j=0}^{+\infty} C_{j}(g,h) \nu^{-j} \]
		such that $\star$ is an associative $\mathbb C[[\nu]]$-linear product, that is, $(g\star h) \star l=g\star (h\star l) ,$ for all $g, h, l\in C^{\infty}(M)$ and $C_{0}(g,h)=g\cdot h $, $ C_{1}(g,h)-C_{1}(h,g)= {\rm i} \{g, h\} $, for all $g, h\in C^{\infty}(M)$. The star product is said to be of \textit{Wick type} if and only if the function appearing as first argument is differentiated in holomorphic directions while the one appearing as second argument is differentiated in anti-holomorphic directions.
		
		In addition to the defining conditions, a star product may enjoy further properties, such as
		\begin{itemize}\itemsep=0pt
			\item[(1)] (Unit) Let $1$ be the constant function equals to $1$ everywhere, then $1\star f= f\star 1 =f$ for every smooth function on $M$.
			\item[(2)] (Parity) For every smooth functions $f$ and $g$ on $M$, then $\overline{f\star g}=\overline{f}\star \overline{g}$.
			\item[(3)] (Trace) Denote the trace on $\mathcal{Q}_k(M)$ by $\mathrm{Tr}_k$. It induces a $\mathbb{C}[[\nu]]$-linear map
			\[\mathrm{Tr} \colon\ {C}^{\infty}(M)[[\nu]]\rightarrow \nu^{-n} \mathbb{C}[[\nu]]\]
			such that
			$\mathrm{Tr}(f\star g)=\mathrm{Tr}(g\star f)$.
		\end{itemize}
\end{Definition}
	
Let $G$ be a compact connected Lie group having dimension $\dim_{\mathbb{R}}G=n_G$. Assume that $M$ admits a Hamiltonian and holomorphic action of $G$ with moment map $\Phi$. Suppose that $0$, in the dual of the Lie algebra, is a regular value for $\Phi$. Assume that the pseudo-metric $g$ restricted to the orbit through any $m\in \Phi^{-1}(0)$ is non-degenerate and the action of $G$ on $\Phi^{-1}(0)$ is free. Then the symplectic reduction $M_G:=\Phi^{-1}(0)/G$ is a quantizable pseudo-K\"ahler manifold, see, for example, \cite[Appendix~B]{rungi}. More precisely, let $\iota \colon\Phi^{-1}(0) \hookrightarrow M$ denote the inclusion map. Then the following holds:
	\begin{itemize}\itemsep=0pt
			\item[(1)] the topological quotient $M_G := \Phi^{-1}(0)/G$ is a smooth manifold of dimension $2n - 2n_G$, and the quotient map $\pi \colon \Phi^{-1}(0) \to M_G$ is a principal $G$-bundle;
			\item[(2)] there exist a unique pseudo-Riemannian metric $g_G$ and complex structure $J_G$ on $M_G$ such that
$
			\pi^* g_G = \iota^* g$, $ \pi^* J_G = \iota^* J$,
			and the 2-form $\omega_G := g_G(\cdot, J_G \cdot)$ defines a symplectic form on the quotient.
	\end{itemize}
 Furthermore, we assume that the symmetric bilinear form $g_G$ has signature $\bigl(n_-^G, n_+^G\bigr)$.

	 Suppose that the action of $G$ on $M$ lifts on $(L,h_L)$. Then there exists an unitary representation of $G$ on $\mathcal{Q}_k(M)$. Let $\mathcal{Q}_k(M)^G$ be the space of $G$-fixed vectors in $\mathcal{Q}_k(M)$. In Section~\ref{s:prelim}, for every smooth function $f$ on $M_G$, we define a $G$-invariant Toeplitz operators $T^G_k[f]$ acting on~$\mathcal{Q}_k(M)^G$. Our next result show that $G$-invariant Toeplitz operators induces a star product~$\star$ which defines a strict deformation quantization in the sense of \cite[Chapter 2]{l}. The following theorem is a consequence of Theorem~\ref{thm:quant} which is formulated further down.
	
\begin{Theorem} \label{cor}
There exists a unique formal star-product of Wick type
\[f\star g := \sum_{j=0}^{+\infty}\nu^j C_j(f, g) ,\qquad C_j(f, g)\ \text{lies in} \ C^{\infty}(M_G), \]
such that for every smooth functions $f$ and $g$ on $M_G$, for every $N\in \mathbb{N}$ we have with suitable constants $K_N(f, g)$, and for all $k\in \mathbb{N}$
\begin{equation} \label{eq:cond}
\left\lVert T^G_k[f] T^G_k[g]-\sum_{j=0}^{N-1} k^{-j} T^G_k[C_j(f,g)]\right \rVert=K_N(f, g) k^{-N} .
\end{equation}
	 	
Furthermore, the star product $\star$ satisfies the properties $(1)$, $(2)$, and $(3)$ of Definition {\rm\ref{def:star}}.
\end{Theorem}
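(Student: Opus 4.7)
The plan is to define the bidifferential operators $C_j(f,g)$ directly as the coefficients appearing in the operator-norm asymptotic expansion of $T^G_k[f]T^G_k[g]$ provided by Theorem~\ref{thm:quant}, and then to transport the required algebraic identities from the operator side to the formal series using uniqueness of asymptotic expansions in powers of $k^{-1}$. Granting Theorem~\ref{thm:quant}, the estimate~\eqref{eq:cond} holds tautologically; the smoothness of $C_j(f,g)$ on $M_G$ and the Wick-type character of $\star$ are read off from the structure of the $G$-invariant Berezin kernel on $\Phi^{-1}(0)$, whose semiclassical expansion produces differentiations in holomorphic directions on the first argument and in anti-holomorphic directions on the second.

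The first two coefficients are identified by short computations. The leading order immediately yields $C_0(f,g)=fg$, while extracting the antisymmetric part of the $k^{-1}$ coefficient gives the commutator estimate $\bigl[T^G_k[f],T^G_k[g]\bigr]={\rm i}k^{-1}T^G_k\bigl[\{f,g\}_{M_G}\bigr]+O\bigl(k^{-2}\bigr)$, hence $C_1(f,g)-C_1(g,f)={\rm i}\{f,g\}_{M_G}$, with the Poisson bracket taken with respect to the reduced form $\omega_G$. Associativity is obtained by equating the asymptotic expansions arising from the two parenthesizations of $T^G_k[f]T^G_k[g]T^G_k[h]$: since the triple operator compositions agree at every $k$, uniqueness of the asymptotic expansion forces the identity $\sum_{i+j=n}C_i\bigl(C_j(f,g),h\bigr)=\sum_{i+j=n}C_i\bigl(f,C_j(g,h)\bigr)$ at each order $n$. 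Uniqueness of $\star$ itself follows by the same principle, since any other formal series satisfying~\eqref{eq:cond} would have to agree with $\star$ coefficient by coefficient.

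The three supplementary properties are inherited from the corresponding structural properties of the equivariant Toeplitz calculus. Property~(1) comes from $T^G_k[1]=\mathrm{Id}_{\mathcal{Q}_k(M)^G}$, which, inserted into the expansions of $T^G_k[1]T^G_k[f]$ and $T^G_k[f]T^G_k[1]$, forces $C_0(1,f)=f=C_0(f,1)$ and $C_j(1,f)=0=C_j(f,1)$ for $j\geq 1$. Property~(2) follows from a Hermiticity relation between $T^G_k[f]$ and $T^G_k[\overline f]$ with respect to the (possibly indefinite) inner product on $\mathcal{Q}_k(M)^G$, combined with the uniqueness of the expansion applied to $(T^G_k[f]T^G_k[g])^*$, which transfers the relation to the formal coefficients. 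Property~(3) follows from the cyclicity $\mathrm{Tr}_k\bigl(T^G_k[f]T^G_k[g]\bigr)=\mathrm{Tr}_k\bigl(T^G_k[g]T^G_k[f]\bigr)$ together with the complete asymptotic expansion of $\mathrm{Tr}_k(T^G_k[h])$ in powers of $k^{-1}$, which Theorem~\ref{thm:quant} also supplies. The main obstacle is precisely Theorem~\ref{thm:quant}: it rests on a careful microlocal analysis of the equivariant Szeg\H{o} and Berezin kernels near $\Phi^{-1}(0)$ in the pseudo-K\"ahler setting, and once those asymptotic expansions are in hand, Theorem~\ref{cor} is a formal consequence.
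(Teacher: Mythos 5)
Your proposal is correct and follows essentially the same route as the paper: the coefficients $C_j$ are taken from the complete asymptotic expansion of $T^G_k[f]T^G_k[g]$ (which in the paper is Theorem~\ref{thm:compositionFouriercomponents}, imported from \cite{gh}, rather than Theorem~\ref{thm:quant} itself, which only records the norm estimate and the first-order Dirac/von Neumann conditions), and all algebraic identities --- $C_0$, the bracket condition, associativity, uniqueness, unit, parity, trace --- are transported from operator identities to identities of functions via the sharp norm asymptotics $\lim_k\lVert T^G_k[h]\rVert=\lVert h\rVert_\infty$ of Theorem~\ref{thm:quant}(a), exactly as in Section~\ref{sec:proofcor}. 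The only inaccuracies are attributional and harmless: the trace expansion likewise comes from \cite{gh,gh2} rather than from Theorem~\ref{thm:quant}, and the inner product on $\mathcal{Q}_k(M)^G$ is the positive-definite $L^2$ product on $(0,n_-)$-forms, so no indefiniteness enters the parity argument.
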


To a certain extent the strategy to show Theorem~\ref{cor} is similar to the strategy done in \cite{schl1}, details are given in Section~\ref{sec:proofcor}. Now, if we put $G=\{e\}$, where $e$ is the identity element, we recover previous results in \cite{schl1}, and we generalize it to pseudo-K\"ahler manifolds. Furthermore, we refer to the surveys \cite{ks,schl2}, and \cite{w} for detailed discussions on the induced star products.
	
In order to better contextualize the aim of this work and to motivate our main results, we~recall some relevant developments in the literature. In~\cite{bms}, the authors studied Berezin--Toeplitz quantization on Hodge manifolds, their proofs are based on the theory of generalized Toeplitz structures developed by L.~Boutet de Monvel, V.~Guillemin, and J.~Sj\"ostrand in the framework of microlocal analysis, see \cite{boutet-guillemin}. In \cite{schl2}, additional properties of the Berezin--Toeplitz star product were established, and it was shown that the structure of Berezin--Toeplitz operators naturally leads to the strictness of certain quantization maps. The idea of considering pseudo-K\"ahler manifolds is suggested by results contained in~\cite{hsma,mm}, in particular see \cite[Theorem~1.7]{mm06} and \cite[Section~8.2]{mm}, where the authors worked out the asymptotic expansion for the Bergman kernel for $(0,q)$-forms under the assumption that the curvature is non-degenerate.
	
In the geometric quantization of pseudo-K\"ahler manifolds, two main strategies are available: one based on the theory of Berezin--Toeplitz quantization with vector bundles, as developed by X.~Ma and G.~Marinescu \cite{mm}, and another one employing Toeplitz operators on the associated circle bundle. In this work, we adopt the second approach. This choice is primarily motivated by the fact that it aligns with the framework we have developed in a previous article \cite{gh}.
	 	
The main approach of X. Ma and G. Marinescu is the local index theory, especially the analytic localization techniques developed by J.-M.~Bismut and G.~Lebeau. Our approach is based of Fourier integral operators of complex type as developed by A.~Melin and J.~Sj\"ostrand. It is worth noting that, in principle, both methods could be employed in the context of this article. Our method can be applied to study Berezin--Toeplitz star product for Reeb invariant smooth functions on non-degenerate CR manifolds with transversal and CR $\mathbb R$-action.

We now introduce the setting for our quantization commutes with reduction result in the pseudo-K\"ahler case. Suppose that the action of $G$ on $M$ lifts to an action on $(L, h_L)$. Then $(L, h_L)$ descends to a Hermitian line bundle $\bigl(L_G, h_L^G\bigr)$ over $M_G$ with curvature $R^{L_G} = -2\pi {\rm i} \omega_G$.
	Recall that the standard K\"ahler case is recovered when $n_-=0$, and consequently $n_-^G=0$.
		
This allows us to define the quantization of the reduced space as follows.
		
\begin{Definition}
The quantization space $\mathcal{Q}_k(M_G)$ is defined as the space of harmonic $\bigl(0,n_-^G\bigr)$-forms with values in \smash{$L_G^{\otimes k}$}
\[
\mathcal{Q}_k(M_G) := \ker \Box_k^{(n_-^G)},
\]
where \smash{$\square^{(n_-^G)}_k$} is the Kodaira Laplacian acting on \smash{$L^{\otimes k}_G$}-valued $\bigl(0,n_-^G\bigr)$-forms.
		\end{Definition}
		
There is a canonical map $\mathcal{Q}_k(M)^G \to \mathcal{Q}_k(M_G)$, and our main result is the following (the proof is given in Section~\ref{sec:proofof1.2}).
		
\begin{Theorem}[quantization commutes with reduction]\label{thm:qr=0}
Under the above assumptions, for every $k \in \mathbb{N}$, the canonical map
$\mathcal{Q}_k(M)^G \longrightarrow \mathcal{Q}_k(M_G)$ is an isomorphism of finite-dimensional Hilbert spaces.
\end{Theorem}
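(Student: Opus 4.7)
My plan uses the circle-bundle picture that the paper develops (following~\cite{gh}) to realize $\mathcal{Q}_k(M)$ and $\mathcal{Q}_k(M_G)$ on a common footing. Let $\pi_X\colon X\to M$ be the unit circle bundle of $L^{-1}$; under the standard correspondence, $L^{\otimes k}$-valued $(0,q)$-forms on $M$ become weight-$k$ Fourier components of forms on $X$, and $\square_k^{(n_-)}$ is the weight-$k$ component of a Kohn-type Laplacian on $X$. The Hamiltonian $G$-action lifts to $X$ and commutes with the $S^1$-action; since $0$ is a regular value of $\Phi$ and $G$ acts freely on $\Phi^{-1}(0)$, the preimage $X_0:=\pi_X^{-1}(\Phi^{-1}(0))$ is a $G$-invariant submanifold, and $X_0/G$ is canonically the unit circle bundle of $L_G^{-1}\to M_G$.

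\textbf{Construction and bijectivity.} Given $s\in\mathcal{Q}_k(M)^G$, viewed as a $G\times S^1$-equivariant harmonic form on $X$, I define the canonical map by restricting to $X_0$, contracting with the vector fields generating the lifted $G$-action in order to drop the form degree from $n_-$ to $n_-^G$, and descending along the quotient $X_0\to X_0/G$. To see that the output lies in $\ker\square_k^{(n_-^G)}$, I would use the moment-map identity $\Phi=0$ together with the non-degeneracy of $g$ on the $G$-orbits to decompose the Kohn Laplacian on $X$ restricted to $G$-basic sections into the Kohn Laplacian on $X_0/G$ plus correction terms which vanish identically on $G$-basic forms of the reduced degree. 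Injectivity then follows by elliptic unique continuation for $\square_k^{(n_-)}$: a $G$-invariant harmonic form whose restriction (with its contractions) vanishes on $\Phi^{-1}(0)$ must vanish identically. Surjectivity is obtained by the reverse procedure: pull back a harmonic form from $M_G$, extend it $G$-invariantly to a tubular neighborhood of $\Phi^{-1}(0)$ using a Marle--Guillemin--Sternberg-type normal form adapted to the pseudo-K\"ahler structure, multiply by a cutoff, and project onto $\mathcal{Q}_k(M)^G$ via the harmonic projector. A symbol-level computation shows that this produces a right inverse modulo a lower-order operator, which together with finite-dimensionality and the previously established injectivity forces bijectivity.

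\textbf{Main obstacle.} The delicate point is the Laplacian comparison underlying both the well-definedness and the inverse construction: in the pseudo-K\"ahler setting one has no positivity to fall back on, and one must keep careful track of the signatures along orbit and horizontal directions, using in an essential way the identity $n_-=n_-^G+n_G^-$, where $\bigl(n_G^-,n_G^+\bigr)$ is the signature of $g$ restricted to a $G$-orbit. Related to this, one has to check that contraction with the orbit generators yields a pointwise isomorphism from $G$-invariant $(0,n_-)$-forms along $\Phi^{-1}(0)$ to $(0,n_-^G)$-forms on $M_G$, so that the canonical map is injective on germs at every point of $\Phi^{-1}(0)$. Once these two geometric inputs are secured, the analytic content of the argument runs essentially as in the K\"ahler Guillemin--Sternberg theorem, adapted to the circle-bundle setting of \cite{gh}.
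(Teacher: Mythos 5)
Your proposal takes a genuinely different route from the paper -- an analytic, circle-bundle comparison of Kohn Laplacians -- whereas the paper follows the holomorphic Guillemin--Sternberg strategy: it extends the $G$-action to the complexified group $G^{\mathbb{C}}$ (using that $\mathrm{Iso}(M,g)\cap\mathrm{Aut}_J(M)$ is a finite-dimensional Lie group in the pseudo-Riemannian setting), identifies $M_G\cong M_s/G^{\mathbb{C}}$ where $M_s$ is the $G^{\mathbb{C}}$-saturation of $\Phi^{-1}(0)$, proves that $M\setminus M_s$ sits in a complex submanifold of codimension greater than one (via Bergman kernel asymptotics along $\Phi^{-1}(0)$), and shows that $\langle s,s\rangle$ attains its maximum on $\Phi^{-1}(0)$ by a gradient-flow argument for $\Phi^{\xi}$; surjectivity of the restriction map then follows from a removable-singularity argument. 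Unfortunately, your version has gaps that I do not see how to close without reintroducing essentially this machinery.

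The most serious problem is your injectivity step. You claim that a $G$-invariant harmonic form whose restriction (with its contractions) vanishes on $\Phi^{-1}(0)$ must vanish identically ``by elliptic unique continuation.'' But $\Phi^{-1}(0)$ has positive real codimension $n_G$ in $M$, and unique continuation for second-order elliptic operators from a positive-codimension set is false in general (the harmonic function $x_1$ vanishes on the hyperplane $x_1=0$). The mechanism that actually makes injectivity work in Guillemin--Sternberg is that a $G$-invariant element of the kernel is automatically $G^{\mathbb{C}}$-invariant, hence determined by its values on the open dense saturation $M_s$ of $\Phi^{-1}(0)$, where the identity theorem applies; your proposal never introduces $G^{\mathbb{C}}$, so this engine is missing. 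The surjectivity step is also incomplete: constructing a right inverse ``modulo a lower-order operator'' by cutoff and projection is an asymptotic statement and at best yields an isomorphism for $k$ sufficiently large, whereas the theorem asserts it for every $k\in\mathbb{N}$; and you cannot combine it with injectivity to force bijectivity without first knowing the two spaces have equal dimension, which is precisely what is to be proved. Finally, the Laplacian decomposition underlying well-definedness (the claimed cancellation of the correction terms on $G$-basic forms of reduced degree, and the identity $n_-=n_-^G+n_G^-$) is asserted rather than proved, and in the indefinite setting this is exactly where the signature bookkeeping could fail. The paper avoids all three issues by working with the complexified action and the maximum principle on $\Phi^{-1}(0)$ rather than with a direct comparison of Laplacians.
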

	
The principle that ``quantization commutes with reduction'' was originally formulated by V.~Guillemin and S.~Sternberg in the K\"ahler setting~\cite{gs}. The present result extends this principle to the pseudo-K\"ahler setting, of which the K\"ahler case is a particular instance.
		
In the context of general symplectic manifolds, proofs of this principle were developed by E.~Meinrenken \cite{14}, M.~Vergne \cite{v,v-II}, and E.~Meinrenken and R.~Sjamaar~\cite{16}. Independently, Y.~Tian and W.~Zhang \cite{tz} gave a different analytic approach. For a survey on the topic, we~refer to~\cite{ma}, where the connection with the CR setting introduced in \cite{hmm} is also highlighted.
		
Finally, let us also mention that in many of the aforementioned approaches, the quantization space is defined as the index of a twisted Spin$^c$-Dirac operator. In contrast, our proof of Theorem~\ref{thm:qr=0} follows the geometric strategy introduced in~\cite{gs}, adapted to the setting of indefinite curvature. We refer to Section~\ref{sec:quot} for the detailed construction and background.

\section{Background and statement of the results}	 \label{sec:for}
	
\subsection{Examples}
	 There are many examples of pseudo-K\"ahler manifolds that serve as natural candidates for quantization. First, we refer to the thesis \cite{rungi}, where the author investigates the symplectic and pseudo-Riemannian geometry of the $\mathbb{P}{\rm SL}(3,\mathbb{R})$ Hitchin component associated with a closed orientable surface. When the genus is at least~$2$, the author defines a mapping class group-invariant pseudo-K\"ahler metric on the Hitchin component.
	
	 A second example is given by the Grauert tube of an analytic pseudo-Riemannian manifold; see \cite{z} for an introduction in the classical Riemannian setting. Finally, we present a natural example arising in the context of the Heisenberg group.
	
	 Before describing this example, we recall some material from~\cite{fs}. A CR manifold $\bigl(X, T^{1,0}X\bigr)$, with contact form $\omega_0$, is said to be nondegenerate if its Levi form is nondegenerate at every point, for precise definitions, we refer the reader to Section~\ref{sec:crform}. In this case, replacing $\omega_0$ by~$-\omega_0$ if necessary, we may assume that the (constant) dimension $k$ of a maximal positive-definite subspace for the Levi form is at least $n/2$. We then say that $X$ is $k$-strongly pseudoconvex. When $k = n$, we simply say that $X$ is strongly pseudoconvex. If $X$ is $k$-strongly pseudoconvex with $k < n$, then by \cite[Lemma 13.1]{fs}, there exist smooth subbundles $E^+$ and $E^-$ of $T^{1,0}X$ such that $T^{1,0}X=E^+\oplus E^-$, the Levi form is positive definite on $E^+$, negative definite on $E^-$, and~${E^+ \perp E^-}$ with respect to the Levi form.
	
	 \begin{Example}
	 	Let $\mathbb{H}_n = \mathbb{C}^n \times \mathbb{R}$ denote the Heisenberg group. Occasionally, as in \cite{f}, it is useful to modify the group law on $\mathbb{H}_n^{{\rm pol}} = \mathbb{R}^{2n+1}$ via
	 	$(p, q, t)\cdot (p', q', t')=(p+p', q+q', t+t'+pq')$
	 	where~${z=p+{\rm i} q}$. Moreover, following \cite[pp.~436--437]{fs}, we may define a left-invariant CR structure on \smash{$\mathbb{H}_n^{{\rm pol}}$} which is $k$-strongly pseudoconvex, with $n/2 \leq k \leq n$, by taking \smash{$T^{1,0}\mathbb{H}_n^{{\rm pol}}$} to be the bundle spanned by
	 	$Z_1, \dots, Z_k, \overline{Z}_{k+1}, \dots, \overline{Z}_n $,
	 	where
	 	\smash{$Z_j=\frac{\partial}{\partial z_j}+{\rm i} \overline{z}_j \frac{\partial}{\partial t}$}.
	 	
	 	By compactifying this example as in \cite[p. 68]{f}, one obtains a natural compact pseudo-K\"ahler~manifold. Let $\Gamma$ be the subgroup of \smash{$\mathbb{H}_n^{{\rm pol}}$} consisting of points with integer coordinates
	 	\[\Gamma =\big\{(p, q, t)\in \mathbb{H}^{\rm pol}_n \mid p, q\in \mathbb{Z}^n\text{ and }t\in \mathbb{Z} \big\} . \]
	 	Then $\Gamma$ is a discrete subgroup of \smash{$\mathbb{H}_n^{{\rm pol}}$}, and the right coset space \smash{$M = \Gamma \backslash \mathbb{H}_n^{{\rm pol}}$} is a compact~mani\-fold. It is easy to verify that the half-open unit cube $Q^{2n+1}$ in \smash{$\mathbb{H}_n^{{\rm pol}}$} is a fundamental domain for~$\Gamma$, meaning that each right coset of $\Gamma$ contains precisely one point of $Q^{2n+1}$. Hence, topologically, $M$ can be viewed as the closed unit cube $Q^{2n+1}$ with certain boundary faces identified—a kind of ``twisted torus''. Moreover, by the above discussion, $M$ inherits a pseudo-K\"ahler structure. \end{Example}
	
	 \subsection{Toeplitz operators in the pseudo-K\"ahler setting}
	
Let $(M, \omega, J)$ be a {pseudo-K\"ahler manifold} and $\dim_{\mathbb{R}}M=2n$. Let $(L, h_L)$ be a Hermitian line bundle on $M$ having Hermitian connection whose curvature is $R^L=-2\pi {\rm i} \omega$. Fix a Hermitian metric $\langle \cdot | \cdot \rangle$ on the holomorphic tangent bundle $T^{1,0}M$. The Hermitian metric $\langle \cdot | \cdot \rangle$ on $T^{1,0}M$ induces a Hermitian metric on $TM\otimes\mathbb C$ and also on
 \[
 \bigoplus_{q\in\mathbb N\cup\set{0},\,0\leq q\leq n}T^{*0,q}M,
\]
 where $T^{*0,q}M$ is the bundle of $(0,q)$ forms. Consider the vector bundle $T^{*0,q}M\otimes L^{\otimes k}$ whose space of smooth sections is denoted by $\Omega^{0,q}\bigl(M,L^{\otimes k}\bigr)$.
	 	
	 The Hermitian metrics $\langle \cdot | \cdot \rangle$ and $h^L$ induce a new Hermitian metric $\langle \cdot | \cdot \rangle_{h^{L^{\otimes k}}}$ on $T^{*0,q}M\otimes L^{\otimes k}$, the corresponding norm is denoted by \smash{$|\cdot|_{h^{L^{\otimes k}}}$}. We can define the $L^2$-inner product as follows
	 \[(s_1,s_2)_k=\int_M \langle s_1 | s_2 \rangle_{h^{L^{\otimes k}}} \mathrm{dV}_M,\qquad s_1, s_2\in \Omega^{0,q}\bigl(M,L^{\otimes k}\bigr) , \]
	 and we denote by $\lVert \cdot \rVert_{k}$ the corresponding norm, where $\mathrm{dV}_M$ is the volume form on $M$ induced by $\langle \cdot | \cdot \rangle$.
	 Let $L^2_{0,q}\bigl(M,L^{\otimes k}\bigr)$ be the completion of $\Omega^{0,q}\bigl(M,L^{\otimes k}\bigr)$ with respect to $(\cdot ,\cdot)_k$. Let
 \[
 \square^{(q)}_k\colon\ \Omega^{0,q}\bigl(M,L^{\otimes k}\bigr)\To\Omega^{0,q}\bigl(M,L^{\otimes k}\bigr)
 \]
 be the Kodaira Laplacian. We denote by the same symbol the $L^2$ extension of $\square^{(q)}_k$
	 \[\square^{(q)}_k\colon\ \operatorname{Dom}\square^{(q)}_k\subset L^2_{0,q}\bigl(M,L^{\otimes k}\bigr)\To L^2_{0,q}\bigl(M,L^{\otimes k}\bigr),\]
	 where
\smash{$
\operatorname{Dom}\square^{(q)}_k=\big\{u\in L^2_{0,q}\bigl(M,L^{\otimes k}\bigr)\mid \square^{(q)}_ku\in L^2_{0,q}\bigl(M,L^{\otimes k}\bigr)\big\}$}.
	 The projection
	 \[ P^{(q)}_k \colon\ L^2_{0,q}\bigl(M,L^{\otimes k}\bigr) \rightarrow H^q\bigl(M,L^{\otimes k}\bigr):=\operatorname{Ker}\square^{(q)}_k
\]
	 onto the kernel of \smash{$\square^{(q)}_k$} is called Bergman projector or Bergman projection. Since \smash{$\square^{(q)}_k$} is elliptic, its distributional kernel satisfies
	 \[P^{(q)}_k( \cdot , \cdot )\in\mathcal{C}^{\infty}\bigl(M\times M, L^k\otimes \bigl(\bigl(T^{*0,q}M\bigr)\boxtimes \bigl(T^{*0,q}M\bigr)^*\bigr)\otimes \bigl(L^k\bigr)^*\bigr) . \]

Now, we shall follow \cite{hsma1} and define Toeplitz operators in this framework. Fix a smooth function $f$ on $M$. Define the Berezin--Toeplitz operator
	 \[T^{(q)}_{k}[f]:= P^{(q)}_k\circ f \circ P^{(q)}_k \colon\ L^2_{0,q}\bigl(M,L^{\otimes k}\bigr) \rightarrow H^q\bigl(M,L^{\otimes k}\bigr):=\operatorname{Ker}\square^{(q)}_k ; \]
	 we always fix $q=n_-$ and we write \smash{$T_{k}[f]=T^{(n_-)}_{k}[f]$}. Note that if $M$ is compact complex manifold endowed with a positive line bundle $L$, we put $q=n_-=0$ and we recover the standard K\"ahler setting. Thus, the quantizing Hilbert space is \smash{$\operatorname{Ker}\square^{(q)}_k$}, for Riemann--Roch-type results we refer to \cite{hsma1} and \cite[Section~1.2]{gh2}.

 	\subsection{CR formulation} \label{sec:crform}
 		
	The circle bundle $X$ inside the dual of the line bundle $L$ is a compact, connected and orientable CR manifold of dimension $2n+1$. We now provide a more precise formulation of these concepts. Let $\bigl(X, T^{1,0}X\bigr)$ be a compact, connected and orientable CR manifold of dimension $2n+1$, $n\geq 1$, where $T^{1,0}X$ is a subbundle of the complexified tangent bundle $ TX\otimes \mathbb{C}$. There is a~unique sub-bundle $HX$ of $TX$ such that
$
 HX\otimes \mathbb{C}=T^{1,0}X \oplus T^{0,1}X
$
 which is called the horizontal bundle. Let~${J \colon HX\rightarrow HX}$ be the complex structure map given by $J(u+\ol u)={\rm i}u-{\rm i}\ol u$, for every~${u\in T^{1,0}X}$. By complex linear extension of $J$ to $TX\otimes \mathbb{C}$, the ${\rm i}$-eigenspace of $J$ is $T^{1,0}X$.
	
	Since $X$ is orientable, there always exists a real non-vanishing $1$-form $\omega_0\in {C}^{\infty}(X,T^*X)$ so that $\langle \omega_0(x) , u \rangle=0$, for every $u\in H_xX$, for every $x\in X$. The one form $\omega_0$ is called contact form and it naturally defines a volume form $\mathrm{dV}_X$ on $X$. For each $x \in X$, we define a quadratic form on $HX$ with respect to $\omega_0$ by
	${L}_x(U,V) =\frac{1}{2}{\rm d}\omega_0(JU, V)$, $ \forall U, V \in H_xX$.
	Then, we extend~${L}$ to~${HX\otimes \mathbb{C}}$ by complex linear extension; for \smash{$U, V \in T^{1,0}_xX$},
	\[
	{L}_x\bigl(U,\overline{V}\bigr) = \frac{1}{2} {\rm d}\omega_0\bigl(JU, \overline{V}\bigr) = -\frac{1}{2{\rm i}} {\rm d}\omega_0\bigl(U,\overline{V}\bigr).
	\]
	The Hermitian quadratic form ${L}_x$ on $T^{1,0}_xX$ is called Levi form at $x$ (with respect to $\omega_0$). Hereafter we shall always assume that the Levi form is non-degenerate with $n_-$ negative and~$n_+$ positive eigenvalues, $n_++n_-=n$. The Reeb vector field $R\in{C}^\infty(X,TX)$ is defined to be the non-vanishing vector field determined by
$\omega_0(R)\equiv -1$, $ 		{\rm d}\omega_0(R,\cdot)\equiv0 $ on $ TX$.
	There is a Reeb vector field such that the flow of it induces a transversal CR $\mathbb{R}$-action on~$X$. Suppose that all orbits of the flow of $R$ are compact. Then $X$ admits a transversal CR circle action, which we denote by ${\rm e}^{{\rm i} \theta}\cdot$, where $\theta\in \mathbb{R}$. Note that this is the case of the unit circle bundle~${X\subset L^{\vee}}$, with projection $\pi\colon X\rightarrow M$ and connection contact form $\omega_0$ such that~${{\rm d}\omega_0 =2 \pi^*(\omega)}$. Then $(X, \omega)$ is a CR manifold and $\omega_0$ is the contact form.
	
	Now, we define the quantizing space and we identify it later with the kernel of the Kodaira Laplacian. For all $x\in X$, let $T^{*0,q}X$ be the $q$-th exterior power of $\bigl(T^{(0,1)}X\bigr)^*$, whose space of sections is denoted by $\Omega^{0,q}(X)$ and its elements are called $(0,q)$-forms. The Hermitian metric~$\langle \cdot | \cdot \rangle$ on $TX\otimes \mathbb{C}$ induces in a natural way a $L^2$ inner product $( \cdot | \cdot )$ on $\Omega^{0,q}(X)$. Denote as $\lVert \cdot\rVert$ the corresponding norm of $( \cdot | \cdot )$. Let \smash{$L^2_{(0,q)}(X)$} be the completion of $\Omega^{0,q}(X)$ with respect to $( \cdot | \cdot )$. Let $\Box^q_b$ be the Gaffney extension of the Kohn Laplacian. The Szeg\H{o} projection is the orthogonal projection
\smash{$S^{(q)} \colon L^2_{(0,q)}(X)\To \operatorname{Ker}\Box^q_b$}
	with respect to $( \cdot | \cdot )$. Let
\[
S^{(q)}(x,y)\in {D}'\bigl(X\times X,T^{*0,q}X\boxtimes\bigl(T^{*0,q}X\bigr)^*\bigr)
\]
 be the distributional kernel of $S^{(q)}$, ${D}'$ denotes the space of distribution sections, see Section~\ref{sec:crmanif} for notation. In this paper, we will assume that $\Box^q_b$ has $L^2$ closed range. This hypothesis is always satisfied for the circle bundle case we are considering.
	
	Fix $q=n_-$ and let $f\in C^{\infty}(X, \mathbb{C})$, then the Toeplitz operator is given by
	\[T^X[f]:=S^{(n_-)}\circ f\cdot\circ S^{(n_-)}\colon\ L^2_{(0,n_-)}(X)\To\operatorname{Ker}\Box^{n_-}_b .
	\]
	The transversal circle action can be used to define the corresponding Fourier components which we denote by $T_k^X[f]$, see \cite{gh} where we use pseudodifferential operators of order zero (furthermore we refer to \cite{sh} for composition with certain classical pseudodifferential operators of order one). Let $u\in\Omega^{0,q}(X)$ be arbitrary. Define
	\[
	Ru:=\frac{\pr}{\pr\theta}\bigl(\bigl({\rm e}^{{\rm i}\theta}\bigr)^*u\bigr)|_{\theta=0}\in\Omega^{0,q}(X).
	\]
	For every $k\in\mathbb Z$, let
	\[
	\Omega^{0,q}_k(X):=\set{u\in\Omega^{0,q}(X)\mid Ru={\rm i}k u},\qquad q=0,1,2,\dots,n .\]
	Thus, we denote \smash{${C}^\infty_k(X):=\Omega^{0,0}_k(X)$}. Then, the $L^2$ inner product $( \cdot | \cdot )$ on $\Omega^{0,q}(X)$
	induced by~$\langle \cdot | \cdot \rangle$ is $S^1$-invariant. Let $L^2_{(0,q), k}(X)$ be
	the completion of $\Omega_k^{0,q}(X)$ with respect to $( \cdot | \cdot )$.
	The $k$-th Szeg\H{o} projection is the orthogonal projection
\smash{$S^{(q)}_{k}\colon L^2_{(0,q)}(X)\To \bigl(\operatorname{Ker}\Box^q_b\bigr)_k$}
	with respect to~$( \cdot | \cdot )$. For ease of notation we put
	\smash{$H(X):=\operatorname{Ker}\Box^{n_-}_b$} and \smash{$H_k(X):=\bigl(\operatorname{Ker}\Box^{n_-}_b\bigr)_k$}.
	Eventually, for every $S^1$ invariant function $f\in{C}^\infty(X)$, the $k$-th Toeplitz operator is given by
	\[
		T_{k}^X[f]:=S^{(n_-)}_{k}\circ f\cdot\circ S^{(n_-)}_{k}\colon\ L^2_{(0,n_-)}(X)\To H_k(X) .
	\]
	
	Now, recall that $X$ is the circle bundle inside the dual of the line bundle $(L, h_L)$. We have defined two important operators: $T_{k}^X[f]$ \big(constructed with the $k$-th Fourier component of the Szeg\H{o} projector \smash{$S^{(n_-)}_{k}$}\big) and $T_{k}[f]$ \big(constructed with the Bergman projector \smash{$P_{k}^{(n_-)}$}\big). They can be identified, in fact we claim
	\[S_{k}^{(n_-)}(x_m,x'_{m'})=(x_m)^k \left(\frac{1}{2\pi}P_{k}^{(n_-)}(m,m')(x'_{m'})^k \right) , \]
	where $\pi(x_m)=m$, $\pi(x'_{m'})=m'$.
	
	\begin{proof}[Proof of the claim]
		We adopt the following conventions and notations: every element $x^{\vee}$ in~$ {(L^{\vee})^{\vee}=L}$ defines a map $x^* \colon L^{\vee}\rightarrow \mathbb{C}$ which is the dual of an element $x\in L^{\vee}$. We also write~$f_m$ for the restriction of \smash{$f\in L^2_{(0,n_-),k}(X)$} along the fibre $S_m$ of the standard circle action on $X$. First, notice that any function \smash{$f\in L^2_{(0,n_-),k}(X)$} is uniquely defined at $m$ along the fibre~${S_m\subset L^{\vee}}$ by~\smash{$f\bigl({\rm e}^{-{\rm i}\theta}\cdot x_m\bigr)={\rm e}^{{\rm i}k\theta} f(x_m) $};
		and thus it defines a linear map ${f_m \colon (L^{\vee}_m)^{\otimes k}\rightarrow T^{*0,q}X}$, which is the same as an element $\sigma(m) \in T^{*0,q}_mM\otimes L^{\otimes k}_m$.
		We have the following isomorphism:
		\begin{align*}\mathfrak{S}_k \colon\ L^2_{(0,n_-),k}(X) \rightarrow L^2_{0,n_-}\bigl(M,L^{\otimes k}\bigr) ,\qquad
			f\mapsto \sigma_f\end{align*}	
		where the inverse of $\mathfrak{S}_k$ is given by $\sigma \mapsto f_{\sigma}$, with $f_{\sigma}(x_m)=(x_m)^k(\sigma(m))$. Finally, the operator given by taking the $k$-th Fourier component along the fibers $S$ of $\pi \colon X\rightarrow M$ is given explicitly~by
		\begin{align*}F_k \colon\ L^2_{(0,n_-)}(X) \rightarrow L^2_{(0,n_-),k}(X),\qquad
			f\mapsto \frac{1}{2\pi}\int_S f(x) (x^*)^k {\rm d}x .\end{align*}
		Now, the claim follows by the formula
		\smash{$S_{k}^{(n_-)}=\mathfrak{S}^{-1}_k\circ P_{k}^{(n_-)} \circ \mathfrak{S}_k\circ F_k$}.	
	\end{proof}

\subsection{Group action framework}

A Hamiltonian and holomorphic action of a compact Lie group $G$ on $M$ induces an infinitesimal action of its Lie algebra on $X$ via the Kostant formula, see \cite{ko}. Under suitable topological conditions—such as when $G$ is semisimple or the manifold $M$ is simply connected—this infinitesimal action can be integrated to an action of $G$ on the circle bundle $X$ inside the dual line bundle~$L^*$. This action of $G$ on $X$ preserves both $\omega_0$ and $J$. We are thus led to introduce the main definitions in the CR setting, but before doing so we give more details on the Kostant formula and the quantization of group actions on pseudo-K\"ahler manifolds.

Let $M$ be a pseudo-K\"ahler manifold with complex structure $J$ and Hodge form $\omega$. Recall that we denote by $X$ the circle bundle lying in the dual line bundle $L^*$, with circle action~${{\rm e}^{{\rm i}\theta}\cdot \colon S^1\times X\rightarrow X}$, projection $\pi\colon X\rightarrow M$ and $\partial_\theta$ the generator of the structure circle action on it. Hence $X$ is naturally a contact and Cauchy--Riemann manifold; $\omega_0$ is the contact form.

Suppose given, in addition, an action $\mu \colon G\times M\rightarrow M$ of a compact Lie group $G$ with Lie algebra $\mathfrak{g}$, which is holomorphic with respect to the complex structure $J$ and Hamiltonian with moment map $\Phi\colon M\rightarrow \mathfrak{g}^\vee$,
$2 \omega(\xi_M, \cdot)={\rm d}\Phi^{\xi}(\cdot)$.
By~\cite{ko}, the action $\mu$ naturally induces an infinitesimal contact action of $\mathfrak{g}$ on the circle bundle $X$. Explicitly, if $\xi\in \mathfrak{g}$ and $\xi_M$ is the corresponding Hamiltonian vector field on $M$, then its contact lift $\xi_X$ is as follows.
Let $\mathbf{v}^\sharp$ denote the horizontal lift on $X$ of a vector field $\mathbf{v}$ on $M$, we have
\begin{equation}\label{eqn:contact vector field}
\xi_X:= \xi_M^\sharp-\langle \Phi\circ \pi, \xi\rangle R .
\end{equation}

Furthermore, suppose that the infinitesimal action~\eqref{eqn:contact vector field} can be integrated to an action of~$G$ on~$X$. By hypothesis, it preserves the Cauchy--Riemann structure and the contact form $\alpha$. There is a naturally induced unitary representation of $G$ on the Hardy space
\smash{$H(X)\subset L^2_{(0, n_-)}(X)$} given by
$ g \colon f \mapsto f\circ {g^{-1}}\cdot$.
	
Thus, adopting the convection of the previous subsection, we find that $X$ admits an action of a compact connected Lie group $G$ of dimension $d$, and the $G$ action preserves $\omega_0$ and the CR structure. Furthermore, for any $\xi \in \mathfrak{g}$, the vector field $\xi_X$ appearing in equation \eqref{eqn:contact vector field} is~\smash{$(\xi_X u)(x)=\frac{\partial}{\partial t}\bigl[u\bigl({\rm e}^{t\xi} x\bigr)\bigr]_{|_{t=0}}$}, for any $u\in {C}^\infty(X)$.
	
Fix a smooth Hermitian metric $\langle \cdot | \cdot \rangle$ invariant under the action of $G$ and $S^1$ on $TX\otimes \mathbb{C}$ so that $T^{1,0}X$ is orthogonal to $T^{0,1}X$, $\langle u | v \rangle$ is real if $u, v$ are real tangent vectors, and $\langle R | R \rangle=1$. Now, we define the quantizing $G$-invariant space. Fix $g\in G$ and $x\in X$, for any $r\in \mathbb{N}$ let
\[
g^*\colon \
\Lambda^r_x(T^*X\otimes \mathbb{C})\To\Lambda^r_{g^{-1}\circ x}( T^*X\otimes \mathbb{C})
\]
 be the pullback map. Since $G$ preserves $J$, we have
\smash{$
g^*\colon T^{*0,q}_xX\To T^{*0,q}_{g^{-1}\circ x}X
$}
 for all $x\in X$. Thus, for $u\in\Omega^{0,q}(X)$, we have $g^*u\in\Omega^{0,q}(X)$. Put
 \[\Omega^{0,q}(X)^G:=\big\{u\in\Omega^{0,q}(X) \mid g^*u=u,\,\forall g\in G\big\} .
 \]
	Since the Hermitian metric $\langle \cdot | \cdot \rangle$ on $TX\otimes \mathbb{C}$ is $G$-invariant, the $L^2$ inner product $( \cdot | \cdot )$ on $\Omega^{0,q}(X)$
	induced by $\langle \cdot | \cdot \rangle$ is $G$-invariant. Denote as $\lVert \cdot\rVert$ the corresponding norm of $( \cdot | \cdot )$. Let $u\in L^2_{(0,q)}(X)$ and $g\in G$, we define $g^*u$ in the standard way. Put
	\[L^2_{(0,q)}(X)^G:=\big\{u\in L^2_{(0,q)}(X)\mid g^*u=u,\,\forall g\in G\big\}.\]
	Put
\smash{$\bigl(\operatorname{Ker}\Box^q_b\bigr)^G:=\operatorname{Ker}\Box^q_b\cap L^2_{(0,q)}(X)^G$}. The $G$-invariant Szeg\H{o} projection is the orthogonal projection
	\[S^{(q)}_G\colon \ L^2_{(0,q)}(X)\To \bigl(\operatorname{Ker}\Box^q_b\bigr)^G\]
	with respect to $( \cdot | \cdot )$. Let \smash{$S^{(q)}_G(x,y)\in {D}'\bigl(X\times X,T^{*0,q}X\boxtimes\bigl(T^{*0,q}X\bigr)^*\bigr)$} be the distributional kernel of \smash{$S^{(q)}_G$}, ${D}'$ denotes the space of distribution sections, see Section~\ref{sec:crmanif} for notation.
	
	Let $f\in C^{\infty}(X, \mathbb{C})$ be invariant under the circle action and the action of $G$, then the $G$-invariant Toeplitz operator is given by
	\[T^G[f]:=S^{(n_-)}_G\circ f\cdot\circ S^{(n_-)}_G\colon\ L^2_{(0,n_-)}(X)\To\bigl(\operatorname{Ker}\Box^{n_-}_b\bigr)^G .
	\]
	The transversal circle action can be used to define the corresponding Fourier components. First, we note that
$
	{\rm e}^{{\rm i}\theta}\cdot (g x)=g \bigl( {\rm e}^{{\rm i}\theta}\cdot x\bigr)$, for every $ x\in X$, $ \theta\in[0,2\pi]$, $ g\in G$.
	For every $k\in\mathbb Z$, let
	\[\Omega^{0,q}_k(X)^G=\big\{u\in\Omega^{0,q}(X)^G \mid Ru={\rm i}k u\big\},\qquad q=0,1,2,\dots,n.
	\]
	Thus, we denote \smash{${C}^\infty_k(X)^G:=\Omega^{0,0}_k(X)^G$}. Then, the $L^2$ inner product $( \cdot | \cdot )$ on $\Omega^{0,q}(X)$
	induced by~$\langle \cdot | \cdot \rangle$ is $G$ and $S^1$-invariant. Let \smash{$L^2_{(0,q), k}(X)^G$} be
	the completion of $\Omega_k^{0,q}(X)^G$ with respect to~$( \cdot | \cdot )$.
	The $k$-th $G$-invariant Szeg\H{o} projection is the orthogonal projection
	\[S^{(q)}_{G,k}\colon\ L^2_{(0,q)}(X)\To \bigl(\operatorname{Ker}\Box^q_b\bigr)^G_k\]
	with respect to $( \cdot | \cdot )$. For ease of notation, we put
	\smash{$H^G(X):=\bigl(\operatorname{Ker}\Box^{n_-}_b\bigr)^G$}
	and
$H^G_k(X):=\smash{\bigl(\operatorname{Ker}\Box^{n_-}_b\bigr)^G_k}$.
	Eventually, for every $G$ and $S^1$ invariant function $f\in{C}^\infty(X)$, the $k$-th $G$-invariant Toeplitz operator is given by
	\[
		T^G_{k}[f]:=S^{(n_-)}_{G,k}\circ f\cdot\circ S^{(n_-)}_{G,k}\colon\ L^2_{(0,n_-)}(X)\To H^G_k(X) .
	\]
	
	 \subsection{A key result on Toeplitz operators}
	
	 The goal of this subsection is to establish a key result concerning Toeplitz operators. This result will serve as the main analytical tool in the proof of the principal theorem of the paper.
	
	 \begin{Definition}\label{d-gue170124}
	 	The contact moment map associated to the form $\omega_0$ is the map $\mu\colon X \to \mathfrak{g}^*$ such that, for all $x \in X$ and $\xi \in \mathfrak{g}$, we have
$\langle \mu(x), \xi \rangle = \omega_0(\xi_X(x))$.
	 \end{Definition}

 	Note that $\mu$ is $\Phi\circ \pi \colon X \rightarrow \mathfrak{g}^*$. Let $b$ be the nondegenerate bilinear form on $HX$ such that~${
	 	b(\cdot , \cdot) = {\rm d}\omega_0(\cdot , J\cdot)}$.
	 Clearly $b$ is the pullback via $\pi$ of the pseudo-Riemannian metric on $M$. Let us denote $\mu^{-1}(0)$ by $Y$. Assume that $0$ is a regular value of $\mu$, assume that the action of $G$ on $\mu^{-1}(0)$ is free and
	 \begin{equation}\label{e-gue200120yydI}
	\underline{\mathfrak{g}}_x\cap \underline{\mathfrak{g}}^{\perp_b}_x=\set{0}\qquad \text{at every point} \ x\in Y,
	 \end{equation}
	 where $\underline{\mathfrak{g}}={\rm Span}(\xi_X; \xi\in\mathfrak{g})$
	 and
$\underline{\mathfrak{g}}^{\perp_b}=\set{v\in HX \mid b(\xi_X,v)=0,\,\forall \xi_X\in\underline{\mathfrak{g}}}$.
	 Note that \eqref{e-gue200120yydI} is one of the main assumptions in \cite{hsiaohuang}. By \cite[Section 2.5]{hsiaohuang}, we observe that when restricted to~${\mathfrak{g}\times \mathfrak{g}}$, the bilinear form $b$ is nondegenerate on $Y$. Note that in Theorem~\ref{thm:qr=0} we assume that the pseudo-metric $g$ restricted to the orbit through any $m\in \Phi^{-1}(0)$ is non-degenerate. This assumption implies equation \eqref{e-gue200120yydI}. Then, $\mu^{-1}(0)$ is a $d$-codimensional submanifold of $X$. Let $Y:=\mu^{-1}(0)$ and let $HY:=HX\cap TY$.

	 Now, recall that $R$ is the Reeb vector field of the contact form $\omega_0$ and let $\mathcal{L}$ denote the Lie derivative. A vector field $V \colon X \rightarrow TX$ such that $\mathcal{L}_V\omega_0=g \omega_0$ for some smooth function $g \colon X\rightarrow \mathbb{R}$ is called a \textit{contact vector field}. By \cite[Lemma 3.5.14]{mds}, for every function $g \colon X\rightarrow \mathbb{R}$ there exists a unique contact vector field $X_g$ which satisfies
	$\omega_0(X_g)=g $
	 and
	 \[ {\rm d}\omega_0(X_g, V)=({\rm d}g(R)) \alpha(V)-{\rm d}g(V)\]
	 for each vector field $V$ on $X$.
	 Thus, there is a one-to-one correspondence between contact vector fields and smooth functions on $(X, \omega_0)$.
	
	 The analog of the Poisson bracket in contact geometry is
	 \[\{g, h\}:=\omega_0([X_g, X_h])={\rm d}g(X_h)-{\rm d}h(X_g)+{\rm d}\omega_0(X_g, X_h) \]
	 for $f$ and $g$ smooth function on $X$. Now, we are interested in the subspace of $H$-invariant smooth functions on $X$. Thus, for any $S^1$-invariant smooth function $g$ we have ${\rm d}g(R)=R(g)=0$ and~${{\rm d}\omega_0(X_g, V)=-{\rm d}g(V)}$.
	 Eventually, we note that for two $H$-invariant smooth functions $g$ and $h$ on $X$, we obtain
	 \[\{g, h\}=-{\rm d}\omega_0(X_g, X_h)+{\rm d}\omega_0(X_h, X_g)+{\rm d}\omega_0(X_g, X_h)={\rm d}\omega_0(X_h, X_g) .\]
	
	 The CR reduction is the CR manifold $X_G:=\mu^{-1}(0)/G$ \cite{hsiaohuang}. Let $\pi_X \colon \mu^{-1}(0) \rightarrow X_G$ be the projection. Then, $X_G$ is a circle bundle over the symplectic reduction $M_G:=\Phi^{-1}(0)/G$. In Section~\ref{sec:quot}, we explain how to generalize some of the results in~\cite{gs} to the setting of pseudo-Riemannian manifolds. In particular, there is an identification
	 $ M_G\cong M_s/G^{\mathbb{C}}$,
	 where $G^{\mathbb{C}}$ is the complexification of the group $G$ and $M_s$ is the saturation by $G^{\mathbb{C}}$ of $\Phi^{-1}(0)$ in $M$.
	
	 Now, consider a smooth function $\widetilde{f}$ on $X$ invariant under the action of $G$ and $S^1$, it gives rise to a smooth function $\widetilde{f}_{\vert Y}$ on $Y=\mu^{-1}(0)$. Then, $\widetilde{f}_{\vert Y}$ defines a smooth function $f$ on $M_G$. Thus, we define the sup-norm of a $G$-invariant function on $\Phi^{-1}(0)$ to be
	 \[
	 	\big\lVert \widetilde{f}\big\rVert_{\infty} :=\lVert f\rVert_{\infty} := \sup\big\{ \big\lvert \widetilde f(x)\big\rvert \mid x \in \mu^{-1}(0)\big\},
	 \]
	 which is the uniform norm of $f$ on the quotient $M_G$. Vice versa, let $f$ be a smooth $S^1$ invariant function on $X_G$, its lift defines a smooth $G$ and $S^1$ invariant function on $Y$. By \cite[Lemma~5.34\,(b)]{lee}, the extension Lemma for functions on submanifolds, since $Y$ is a smooth properly embedded submanifold of $X$, then there exists a smooth function $\widetilde{f}$ on $X$ whose restriction to~$Y$ coincides with $f$.
	
	 As the last piece of notation, let $\widetilde{f}$ be a $G$-invariant function on $X$, we denote the operator norm of its corresponding level $k$ $G$-invariant Toeplitz operator
	 \begin{equation} \label{eq:opn}\big\lVert T^{G}_{k}\big[\widetilde{f}\big] \big\rVert=\sup \left\{\frac{\lVert T^G_k\big[\widetilde{f}\big]s\rVert}{\lVert s\rVert} \mid s\in H^{G}_k(X) ,\, s\neq 0\right\} .
	 \end{equation}
	
	 In the following theorem, we state an analogue of the main result in \cite{bms}. Parts (c) and (b) of the following theorem are a consequence of the results contained in \cite{gh}, see also \cite{mm,mmt}, where the Bergman kernels and the corresponding Toeplitz operators on symplectic manifolds, for the mixed curvature case, are studied. In Section~\ref{sec:proofoftheorem}, we prove part~(a) by adapting the strategy of~\cite{schl2} to our setting.
	
	 \begin{Theorem} \label{thm:quant} Under the assumptions and notations above. Let $\widetilde{f}$, $\widetilde{g}$ be $G$ and $S^1$ invariant functions on $X$, then
	 	\begin{itemize}\itemsep=0pt
	 		\item[$(a)$] there exists a positive constant $C$, such that
	 		\[\lVert {f}\rVert_{\infty}-\frac{C}{k} \leq \big\lVert T^{G}_{k}\big[\widetilde{f}\big] \big\rVert \leq \lVert {f} \rVert_{\infty}
	 		\]
	 		and in particular $\lim_{k} \big\lVert T^{G}_{k}\big[\widetilde{f}\big] \big\rVert= \lVert {f}\rVert_{\infty}$;
	 		\item[$(b)$] the semi-classical Dirac condition holds
	 		\[ \big\lVert k\mathrm{i} \big[T^{G}_{k}\big[\widetilde{f}\big], T^{G}_{k}\big[\widetilde{g}\big]\big] -T^{G}_{k}\big[\big\{\widetilde{f}, \widetilde{g}\big\}\big]\big\rVert=O\left(\frac{1}{k}\right) ;\]
	 		\item[$(c)$] the semi-classical von Neumann's condition holds
	 		\[\big\lVert T^{G}_{k}\big[\widetilde{f}\big] T^{G}_{k}\big[\widetilde{g}\big] -T^{G}_{k}\big[\widetilde{f}\cdot \widetilde{g}\big]\big\rVert=O\left(\frac{1}{k}\right) .\]
	 	\end{itemize}
	 \end{Theorem}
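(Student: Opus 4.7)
My plan is to treat parts (b) and (c) as consequences of the asymptotic composition formulas for $G$-invariant Toeplitz operators on the circle bundle $X$ developed in \cite{gh}: composing $T^G_k[\widetilde{f}]$ and $T^G_k[\widetilde{g}]$ yields an expansion whose leading term is $T^G_k[\widetilde{f}\cdot\widetilde{g}]$ and whose $k^{-1}$ correction, after antisymmetrization and multiplication by $k\mathrm{i}$, reproduces $T^G_k[\{\widetilde{f},\widetilde{g}\}]$; the corresponding operator-norm remainders are $O(k^{-1})$, giving (b) and (c). The real novelty is part (a), which I propose to handle by adapting the peaked-section argument of \cite{schl2} to the $G$-invariant pseudo-K\"ahler CR setting.

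For the upper bound $\big\lVert T^G_k[\widetilde{f}]\big\rVert \leq \lVert f\rVert_\infty$, I would start from the fact that $S^{(n_-)}_{G,k}$ is an orthogonal projection, so that for every $s\in H^G_k(X)$
\[ \big\lVert T^G_k[\widetilde{f}]s\big\rVert \leq \big\lVert \widetilde{f} s\big\rVert \leq \lVert \widetilde{f}\rVert_\infty \lVert s\rVert. \]
Since elements of $H^G_k(X)$ concentrate microlocally on $Y=\mu^{-1}(0)$ as $k\to\infty$ (this is built into the description of $S^{(n_-)}_{G,k}$ in \cite{gh}), only $\widetilde{f}|_Y$ is seen effectively; choosing a $G$-equivariant extension of $f$ supported in a tube around $Y$ with $\lVert\widetilde{f}\rVert_\infty=\lVert f\rVert_\infty$ then yields the stated bound.

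For the lower bound, I would pick $x_0\in Y$ with $|\widetilde{f}(x_0)|=\lVert f\rVert_\infty$ and construct a normalized $G$-invariant peaked section $s_k\in H^G_k(X)$ concentrated near the orbit $G\cdot x_0$, obtained by applying $S^{(n_-)}_{G,k}$ to a suitable equivariant bump form at $x_0$. The asymptotic expansion of $S^{(n_-)}_{G,k}(x,y)$ from \cite{gh} provides Gaussian transverse concentration of width $k^{-1/2}$ and leads to
\[ \big(T^G_k[\widetilde{f}]s_k,s_k\big)=\widetilde{f}(x_0)\lVert s_k\rVert^2+O\big(k^{-1}\lVert s_k\rVert^2\big), \]
whence $\big\lVert T^G_k[\widetilde{f}]\big\rVert\geq |\widetilde{f}(x_0)|-\frac{C}{k}=\lVert f\rVert_\infty-\frac{C}{k}$. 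Combined with the upper bound, this also yields the limit $\lim_k \big\lVert T^G_k[\widetilde{f}]\big\rVert=\lVert f\rVert_\infty$.

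The main obstacle is the rigorous control of the matrix element displayed above. One has to combine the complex stationary-phase description of the $G$-invariant Szeg\H{o} kernel, whose real phase locus lies in $Y\times Y$, with a second-order Taylor expansion of $\widetilde{f}$ at $x_0$, and verify that the linear correction averages to zero against the Gaussian transverse profile so that the residual error is genuinely $O(k^{-1})$ rather than $O(k^{-1/2})$. The pseudo-K\"ahler character --- the indefinite Levi form and the need to work with $(0,n_-)$-forms --- prevents a direct transplantation of the scalar estimates of \cite{bms,schl2}; one must use the CR setup of \cite{gh}, which itself rests on an extension of the Boutet de Monvel--Sj\"ostrand machinery to non-degenerate CR manifolds with a transversal $\mathbb{R}$-action and a compatible $G$-action. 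Once the required localization and stationary-phase expansion are in place, the two bounds combine to give (a), and together with (b) and (c) they complete the proof of Theorem~\ref{thm:quant}.
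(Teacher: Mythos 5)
Your proposal is correct and follows essentially the same route as the paper: parts (b) and (c) are deduced from the composition asymptotics of \cite{gh} (Theorem~\ref{thm:compositionFouriercomponents}), and part (a) is obtained by sandwiching $\big\lVert T^G_k\big[\widetilde f\big]\big\rVert$ between the trivial projection bound and a normalized coherent-state matrix element at a maximizing point of $\mu^{-1}(0)$. Your ``peaked section'' $s_k$ is exactly the coherent state $S^G_k(\cdot,x_0)v$ used in the paper, and the matrix-element asymptotics you require is precisely the Berezin-transform expansion $I^G_k[f]=f+O(1/k)$ on $\mu^{-1}(0)$ of Theorem~\ref{thm:main}.
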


 	A main consequence of Theorem~\ref{thm:quant} is Theorem~\ref{cor} stated in the introduction. This is shown in Section~\ref{sec:proofcor}.
Finally, note that the Berezin--Toeplitz quantization is also a positive quantization in the following sense, see \cite{lef}. Let $f \in C^{\infty}(X_G)^{S^1}$ be such that $f(x) \geq 0$ for every $x\in X_G$. Then~$T_k^G[f]$ is a non-negative operator, in the sense that $\bigl(T_k^G[f]s, s\bigr)\geq 0$ for every $s \in H_k^G(X)$. Equivalently, by the spectral theorem, a self-adjoint linear operator $A$ is positive if and only if ${\mathrm{Spec}(A)\subset [0, +\infty)}$.
	 On the other hand, the Weyl quantization of a~non-negative function need not be a~non-negative operator.\looseness=-1

	 \section{Preliminaries}\label{s:prelim}
	
	 \subsection{Standard notations} \label{s-ssna}
	 The following notations are adopted through this article: $\mathbb N$ is the set of natural numbers, $\mathbb N_0=\mathbb N\cup\{0\}$, $\mathbb R$ is the set of
	 real numbers, and we denote by ${\mathbb R}_+=\{x\in\mathbb R \mid x>0\}$ and $\overline{\mathbb R}_+=\{x\in\mathbb R\mid x\geq0\}$. Furthermore, we write $\alpha=(\alpha_1,\dots,\alpha_n)\in\mathbb N^n_0$
	 if $\alpha_j\in\mathbb N_0$,
	 $j=1,\dots,n$.
	
	 Let $M$ be a smooth paracompact manifold and let $TM$ and $T^*M$ denote the tangent bundle of $M$
	 and the cotangent bundle of $M$, respectively.
	 The complexified tangent bundle of $M$ and the complexified cotangent bundle of $M$ will be denoted by $TM\otimes \mathbb{C}$
	 and $T^*M\otimes \mathbb{C}$, respectively. Let $\langle \cdot ,\cdot \rangle$ denote the pointwise
	 duality between $TM$ and $T^*M$ and we extend $\langle \cdot ,\cdot \rangle$ bi-linearly to~${ TM\otimes \mathbb{C}\times T^*M\otimes \mathbb{C}}$.
	 Let $B$ be a smooth vector bundle over $M$ whose fiber at $x\in M$ will be denoted by $B_x$.
	 If $E$ is another vector bundle over a smooth paracompact manifold $N$, we write
	 $B\boxtimes E^*$ to denote the vector bundle over $M\times N$ with fiber over $(x, y)\in M\times N$
	 consisting of the linear maps from $E_y$ to $B_x$.
	
	 Let $Y\subset M$ be an open set, the spaces of distribution sections of $B$ over $Y$ and smooth sections of $B$ over $Y$ will be denoted by $ D'(Y, B)$ and ${C}^\infty(Y, B)$, respectively.
	 Let $ E'(Y, B)$ be the subspace of $ D'(Y, B)$ whose elements have compact support in $Y$.

	 Now, we recall the Schwartz kernel theorem.
	 Let $B$ and $E$ be smooth vector
	 bundles over paracompact orientable smooth manifolds $M$ and $N$, respectively, equipped with smooth densities of integration. If
	$A\colon {C}^\infty_0(N, E)\To D'(M, B)$
	 is continuous, we write $A(x, y)$ to denote the distribution kernel of $A$.
	 The following two statements are equivalent
	 \begin{enumerate}\itemsep=0pt
	 	\item[(1)] $A$ is continuous: $ E'(N, E)\To{C}^\infty(M, B)$,
	 	\item[(2)] the distributional kernel $A(x,y)$ lies in ${C}^\infty(M\times N,B\boxtimes E^*)$.
	 \end{enumerate}
	 If $A$ is continuous, we say that $A$ is smoothing on $M \times N$. Let
	 \[A,\hat A\colon\ {C}^\infty_0(N, E)\To{D}'(M, B)\] be continuous operators, then we write
$
	 	\mbox{$A\equiv \hat A$ (on $M\times N$)}
$
	 if $A-\hat A$ is a smoothing operator. If $M=N$, we simply write ``on $M$".
	 We say that $A$ is properly supported if the restrictions of the two projections
	 $(x,y)\mapsto x$, $(x,y)\mapsto y$ to ${\rm supp }(A(x,y))$
	 are proper.
	
	 Eventually, let $H(x,y)\in D'(M\times N, B\boxtimes E^*) $
	 then we write $H$ to denote the unique
	 continuous operator $ C^\infty_0(N,E)\To D'(M,B)$ with distribution kernel $H(x,y)$ and we identify $H$ with~$H(x,y)$.
	
	 \subsection{Operators and symbols} \label{sec:crmanif}
	
	 Let $\bigl(X, T^{1,0}X\bigr)$ be a compact, connected and orientable CR manifold of dimension $2n+1$, $n\geq 1$.
	
	 Let $D$ be an open set of $X$. Recall that $\Omega^{0,q}(D)$ denote the space of smooth sections of $T^{*0,q}X$ over $D$ and let $\Omega^{0,q}_c(D)$ be the subspace of $\Omega^{0,q}(D)$ whose elements have compact support in~$D$. Let
$
	 \ddbar_b\colon\Omega^{0,q}(X)\To\Omega^{0,q+1}(X)
$
	 be the tangential Cauchy--Riemann operator. Let ${\rm d}v(x)$ be the volume form on $X$ induced by the Hermitian metric $\langle \cdot | \cdot \rangle$.
	 The natural global $L^2$ inner product~$( \cdot | \cdot )$ on $\Omega^{0,q}(X)$
	 induced by~${\rm d}v(x)$ and $\langle \cdot | \cdot \rangle$ is given by
	 \[
	 ( u | v ):=\int_X\langle u(x) | v(x) \rangle {\rm d}v(x) ,\qquad u,v\in\Omega^{0,q}(X) .
	 \]
	 Recall that \smash{$L^2_{(0,q)}(X)$} denote
	 the completion of $\Omega^{0,q}(X)$ with respect to $( \cdot | \cdot )$.
	 We extend $( \cdot | \cdot )$ to~\smash{$L^2_{(0,q)}(X)$}
	 in the standard way. For \smash{$f\in L^2_{(0,q)}(X)$}, we denote $\norm{f}^2:=( f | f )$.
	 We extend
	 $\ddbar_{b}$ to~\smash{$L^2_{(0,r)}(X)$}, $r=0,1,\dots,n$, by
	 \[
	 \ddbar_{b}\colon \ \operatorname{Dom}\ddbar_{b}\subset L^2_{(0,r)}(X)\To L^2_{(0,r+1)}(X) ,
	 \]
	 where \[\operatorname{Dom}\ddbar_{b}:=\big\{u\in L^2_{(0,r)}(X)\mid \ddbar_{b}u\in L^2_{(0,r+1)}(X)\big\}\] and, for any $u\in L^2_{(0,r)}(X)$, $\ddbar_{b} u$ is defined in the sense of distributions.
	 We also write
	 \[
	 \ol{\pr}^{*}_{b}\colon \ \operatorname{Dom}\ol{\pr}^{*}_{b}\subset L^2_{(0,r+1)}(X)\To L^2_{(0,r)}(X)
	 \]
	 to denote the Hilbert space adjoint of $\ddbar_{b}$ in the $L^2$ space with respect to $( \cdot | \cdot )$.
	 Let $\Box^{q}_{b}$ denote the Gaffney extension of the Kohn Laplacian whose domain $\operatorname{Dom}\Box^q_{b}$ is given by
	 \begin{equation*}
	 		\big\{s\in L^2_{(0,q)}(X)\mid
	 		s\in\operatorname{Dom}\ddbar_{b}\cap\operatorname{Dom}\ol{\pr}^{*}_{b},\,
	 		\ddbar_{b}s\in\operatorname{Dom}\ol{\pr}^{*}_{b}, \, \ol{\pr}^{*}_{b}s\in\operatorname{Dom}\ddbar_{b}\big\}
	 \end{equation*}
	 and
$
	 		\Box^{q}_{b}s=\ddbar_{b}\ol{\pr}^{*}_{b}s+\ol{\pr}^{*}_{b}\ddbar_{b}s
 $ for $s\in \operatorname{Dom}\Box^q_{b} $.
	 Let
	 \[
	 	S^{(q)}\colon\ L^2_{(0,q)}(X)\To\operatorname{Ker}\Box^q_b
	 \]
	 be the orthogonal projection with respect to the $L^2$ inner product $( \cdot | \cdot )$ and let
	 \[
	 S^{(q)}(x,y)\in{D}'\bigl(X\times X,T^{*0,q}X\boxtimes\bigl(T^{*0,q}X\bigr)^*\bigr)
	 \]
	 denote the distribution kernel of $S^{(q)}$.
	
	 Now, we recall H\"ormander symbol space. Let $D\subset X$ be a local coordinate patch with local coordinates $x=(x_1,\dots,x_{2n+1})$.
	
	 \begin{Definition}\label{d-gue140221a}
	 	For $m\in\Real$,
$
S^m_{1,0}\bigl(D\times D\times\mathbb{R}_+,T^{*0,q}X\boxtimes\bigl(T^{*0,q}X\bigr)^*\bigr)
$
	 	is the space of all
\[
a(x,y,t)\in{C}^\infty\bigl(D\times D\times\mathbb{R}_+,T^{*0,q}X\boxtimes\bigl(T^{*0,q}X\bigr)^*\bigr)
\]
	 	such that, for all compact $K\Subset D\times D$ and all $\alpha, \beta\in\mathbb N^{2n+1}_0$, $\gamma\in\mathbb N_0$,
	 	there is a constant $C_{\alpha,\beta,\gamma}>0$ such that
	 	\[\big|\pr^\alpha_x\pr^\beta_y\pr^\gamma_t a(x,y,t)\big|\leq C_{\alpha,\beta,\gamma}(1+\abs{t})^{m-\gamma} \qquad
	 	\text{for every}\ (x,y,t)\in K\times\Real_+, \ t\geq1.\]
	 	Put
$S^{-\infty}\bigl(D\times D\times\mathbb{R}_+,T^{*0,q}X\boxtimes\bigl(T^{*0,q}X\bigr)^*\bigr)$
	 	to be
	 	\[\bigcap_{m\in\Real}S^m_{1,0}\bigl(D\times D\times\mathbb{R}_+,T^{*0,q}X\boxtimes\bigl(T^{*0,q}X\bigr)^*\bigr).
	 	\]
	 	
	 	Now, let \[a_j\in S^{m_j}_{1,0}\bigl(D\times D\times\mathbb{R}_+,T^{*0,q}X\boxtimes\bigl(T^{*0,q}X\bigr)^*\bigr) ,\]
	 	$j=0,1,2,\dots$ with $m_j\To-\infty$, as $j\To\infty$.
	 	Then there exists \[a\in S^{m_0}_{1,0}\bigl(D\times D\times\mathbb{R}_+,T^{*0,q}X\boxtimes\bigl(T^{*0,q}X\bigr)^*\bigr)\]
	 	unique modulo $S^{-\infty}$ such that
	 	\[a-\sum^{k-1}_{j=0}a_j\in S^{m_k}_{1,0}\bigl(D\times D\times\mathbb{R}_+,T^{*0,q}X\boxtimes\bigl(T^{*0,q}X\bigr)^*\bigr)\]
	 	for $k=0,1,2,\dots$.
	 	
	 	If $a$ and $a_j$ have the properties above, we write $a\sim\sum^{\infty}_{j=0}a_j$ in
	 	\[S^{m_0}_{1,0}\bigl(D\times D\times\mathbb{R}_+,T^{*0,q}X\boxtimes\bigl(T^{*0,q}X\bigr)^*\bigr) .\]
	 	Furthermore, we write
	 	\[
	 	s(x, y, t)\in S^{m}_{{\rm cl }}\bigl(D\times D\times\mathbb{R}_+,T^{*0,q}X\boxtimes\bigl(T^{*0,q}X\bigr)^*\bigr)
	 	\]
	 	if \[s(x, y, t)\in S^{m}_{1,0}\bigl(D\times D\times\mathbb{R}_+,T^{*0,q}X\boxtimes\bigl(T^{*0,q}X\bigr)^*\bigr)\] and
	 	\[
	 		s(x, y, t)\sim\sum^\infty_{j=0}s^j(x, y)t^{m-j}\qquad \text{in} \ S^{m}_{1, 0}
	 		\bigl(D\times D\times\mathbb{R}_+ ,T^{*0,q}X\boxtimes\bigl(T^{*0,q}X\bigr)^*\bigr) ,
	 	\]
	 	where
	 	\[
	 		s^j(x, y)\in{C}^\infty\bigl(D\times D,T^{*0,q}X\boxtimes\bigl(T^{*0,q}X\bigr)^*\bigr),\qquad j\in\N_0 .
	 	\]
	 	Sometimes, we simply write $S^{m}_{1,0}$ to denote
$
S^m_{1,0}\bigl(D\times D\times\mathbb{R}_+,T^{*0,q}X\boxtimes\bigl(T^{*0,q}X\bigr)^*\bigr)$,
 where $m\in\mathbb R\cup\set{-\infty}$.
	 \end{Definition}
	
	 Let $E$ be a smooth
	 vector bundle over $X$. Let $m\in\Real$, $0\leq\rho,\delta\leq1$. Let $S^m_{\rho,\delta}(X,E)$
	 denote the H\"{o}rmander symbol space on $X$ with values in $E$ of order $m$ type $(\rho,\delta)$
	 and let $S^m_{{\rm cl }}(X,E)$
	 denote the space of classical symbols on $X$ with values in $E$ of order $m$.
	 For $a\in S^m_{{\rm cl }}(X,E)$, we~write ${\rm Op }(a)$ to denote the pseudodifferential operator on $X$ of order $m$ from sections of $E$ to sections of $E$ with full symbol~$a$.
	
	 Let $D\subset X$ be an open set. Let
$L^m_{{\rm cl }}\bigl(D,T^{*0,q}X\boxtimes \bigl(T^{*0,q}X\bigr)^*\bigr)$
	 denote the space of classical
	 pseudodifferential operators on $D$ of order $m$ from sections of
	 $T^{*0,q}X$ to sections of $T^{*0,q}X$ respectively. Let $P\in L^m_{{\rm cl }}\bigl(D,T^{*0,q}X\boxtimes \bigl(T^{*0,q}X\bigr)^*\bigr) $ and we write $\sigma_P$ and $\sigma^0_P$ to denote the full symbol of $P$ and the principal symbol of $P$, respectively.
 	
 	 \subsection{Toeplitz operators}
	 \label{sec:toep}
	
	 In this subsection, we briefly recall some results from~\cite{gh}. Here, the Reeb vector field $R$ is transversal to the space $\underline{\mathfrak{g}}$ at every point $p\in\mu^{-1}(0)$,
${\rm e}^{{\rm i}\theta}\cdot g\circ x=g\circ {\rm e}^{{\rm i}\theta}\cdot x$, for every $ x\in X$, $\theta\in[0,2\pi]$, $ g\in G$,
	 and the action of $G$ and $S^1$ is free near $\mu^{-1}(0)$.
		
	For every smooth function $g\in{C}^\infty(X)^G$ invariant under the action of $S^1$, the $k$-th $G$-invariant Toeplitz operator is given by
	 \[
	 	T^G_{k}[g]:=S^{G}_{k}\circ g\cdot\circ S^{G}_{k}\colon\ L^2_{(0,n_-)}(X)\To L^2_{(0,n_-),k}(X)^G.
	 \]
	 The main interest for us is the case when $g=\tilde{f}$ and $f$ is in \smash{$C^{\infty}(X_G)^{S^1}$}. For the following theorem see in \cite[Theorem 1.8]{hsiaohuang} and \cite{gh}.
	
	 \begin{Theorem} \label{thm:kfourierszego}
	 	Let $f\in{C}^\infty(X)^G$ and suppose $q= n_-$. Let $D$ be an open set in $X$ such that the intersection $\mu^{-1}(0)\cap D= \varnothing$. Then, $T^{G}_{k}[f]\equiv O(k^{-\infty})$ on $D$.
	 	
	 	Let $p\in \mu^{-1}(0)$ and let $U$ a local neighborhood of $p$. Then, if $q= n_-$, for every fixed~${y\in U}$, we consider $T_{k}^{G}[f](x,y)$ as a $k$-dependent smooth function in $x$. There exist local coordinates $(x_1,\dots ,x_{2n+1})$ on $U$ such that
	 	\[T^G_{k}[f](x, y)= {\rm e}^{{\rm i} k \Psi(x, y)} b(x, y, k)+O(k^{-\infty}) \]
	 	for every $x, y\in U$. For an explicit expression of the phase function $\Psi$ we refer to {\rm\cite{gh}}, the symbol satisfies
	 	\[b(x, y, m)\in S^{n-d/2}_{\mathrm{loc}}\bigl(1, U\times U, T^{*0,q}X\boxtimes\bigl(T^{*0,q}X\bigr)^*\bigr) \]
	 	and the leading term of $b(x,y,m)$ along the diagonal at $x\in\mu^{-1}(0)$ is given by
	 	\[b_0(x, x):= 2^{d-1}\frac{1}{V_{{\rm eff }}(x)}\pi^{-n-1+\frac{d}{2}}\abs{\det R_x}^{-\frac{1}{2}}\abs{\det{L}_{x}}\tau_{x,n_-},
 \]
	 	where
$V_{{\rm eff }}(x):=\int_{O_x}{\rm dV}_{O_x}$,
	 	$O_x$ is the orbit through $x$, and $R_x$ is the linear map
	 	\[
	 		R_x\colon\ \underline{\mathfrak{g}}_x\To\underline{\mathfrak{g}}_x,\qquad
	 		u\To R_xu,\qquad\langle R_xu | v \rangle=\langle {\rm d}\omega_0(x) , Ju\wedge v \rangle.
	 	\]
	 \end{Theorem}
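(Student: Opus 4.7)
The plan is to read off the theorem from the microlocal description of the $G$-invariant Szeg\H{o} projector $S^{(n_-)}_{G,k}$ already obtained in \cite{hsiaohuang,gh}, and then analyze the composition $T^G_k[f]=S^{(n_-)}_{G,k}\circ f\circ S^{(n_-)}_{G,k}$ via a complex stationary phase argument in the Melin--Sj\"ostrand framework.

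First I would recall the structure of $S^{(n_-)}_{G,k}(x,y)$. By \cite{hsiaohuang}, $S^{(n_-)}_G$ is a complex Fourier integral operator whose canonical relation is supported over $\mu^{-1}(0)\times\mu^{-1}(0)$; the standing hypothesis $\Box^{n_-}_b$ has $L^2$-closed range, together with the transversality and freeness assumptions, is exactly what is needed to apply that theorem. Taking the $k$-th Fourier component along the $S^1$-action as in \cite{gh} produces an oscillatory integral representation $S^{(n_-)}_{G,k}(x,y)={\rm e}^{{\rm i}k\Psi(x,y)}s(x,y,k)+O(k^{-\infty})$ with $s\in S^{n-d/2}_{\rm loc}$, where $\Psi$ is the phase constructed in \cite{gh} and the leading symbol is known along the diagonal in $\mu^{-1}(0)$.

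Next, on an open set $D$ with $D\cap\mu^{-1}(0)=\varnothing$, I would use $|\mu|\geq c>0$ on $\overline D$ together with the fact that the imaginary part of $\Psi$ is strictly positive off the $G\times S^1$-orbit of the diagonal over $\mu^{-1}(0)$. This forces the oscillatory integral for $S^{(n_-)}_{G,k}(x,y)$ to be $O(k^{-\infty})$ uniformly on $D\times D$ (equivalently, a standard cutoff/integration-by-parts argument exploits that the phase in the $\theta$ direction of the $S^1$-averaging has nonvanishing derivative proportional to $\mu$). The same decay is then inherited by $T^G_k[f]$ after multiplying by the smooth function $f$ and composing with another copy of $S^{(n_-)}_{G,k}$, giving part (a) of the statement.

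For a point $p\in\mu^{-1}(0)$, choose the adapted coordinates of \cite{gh} on a neighborhood $U$ and write
\[
T^G_k[f](x,y)=\int_U S^{(n_-)}_{G,k}(x,z) f(z) S^{(n_-)}_{G,k}(z,y) \mathrm{dV}_X(z)+O(k^{-\infty}).
\]
Substituting the two oscillatory representations gives a kernel of the form $\int {\rm e}^{{\rm i}k(\Psi(x,z)+\Psi(z,y))} f(z) s(x,z,k) s(z,y,k) \mathrm{dV}_X(z)$. The composition formula for complex phases of Melin--Sj\"ostrand yields a single phase, which is again a representative of the equivalence class of $\Psi$; complex stationary phase in $z$ then produces a classical symbol $b(x,y,k)\in S^{n-d/2}_{\rm loc}$. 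The leading term $b_0(x,x)$ is obtained by evaluating the stationary phase Hessian along the diagonal: the Gaussian integration transverse to the $G\times S^1$-orbit produces the factor $\abs{\det R_x}^{-1/2}$ and the power of $\pi$, the $G$-averaging of $S^{(n_-)}_{G,k}$ contributes $V_{{\rm eff}}(x)^{-1}$ together with the combinatorial constant $2^{d-1}$, the Levi factor $\abs{\det L_x}$ and the projection $\tau_{x,n_-}$ come from the leading diagonal symbol of the ordinary Szeg\H{o} kernel in the mixed-signature case, and the value of $f$ at the critical point $z=x$ is absorbed (here $f$ plays the role of the coefficient). The hardest step will be this complex stationary phase composition: one must track the complex Hessian along the orbit direction, verify that no new phase contribution appears modulo the equivalence of Melin--Sj\"ostrand phases, and carefully collect determinant factors so that the formula for $b_0$ matches the normalization used in \cite{gh}.
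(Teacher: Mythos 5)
The paper does not actually prove this theorem: it is quoted as a known result, with the reader referred to \cite[Theorem~1.8]{hsiaohuang} and \cite{gh} (see the sentence immediately preceding the statement in Section~3.3). Your proposal reconstructs precisely the strategy of those references --- the complex Fourier integral operator description of the $G$-invariant Szeg\H{o} projector concentrated on $\mu^{-1}(0)$, rapid decay of the kernel away from $\mu^{-1}(0)$, extraction of the $k$-th Fourier component, and Melin--Sj\"ostrand complex stationary phase applied to the composition $S^{(n_-)}_{G,k}\circ f\circ S^{(n_-)}_{G,k}$ --- so it is essentially the same approach as the proof the paper points to, presented as a plan with the determinant bookkeeping deferred. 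One small point in your favour: you correctly observe that the value $f(x)$ should multiply the leading symbol of the composition, whereas the displayed formula for $b_0(x,x)$ in the statement records only the Szeg\H{o}-kernel leading term without the factor $f(x)$; this appears to be an omission in the statement rather than in your argument.
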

	
 Now we recall the definition of the operator $\hat{H}$, which appears in the description of the $C_j$'s, see \cite{gh}. It is convenient to introduce a class of operator which looks microlocally like the Szeg\H{o} kernel. Here we recall the definition from \cite{gh} for $q=n_-$ and action free case.

 \begin{Definition}\label{d-gue210803yyd}
 	Let $H\colon \Omega^{0,n_-}(X)\To\Omega^{0,n_-}(X)$ be a continuous operator with distribution kernel \[H(x,y)\in{D}'\bigl(X\times X,T^{*0,n_-}X\boxtimes\bigl(T^{*0,n_-}X\bigr)^*\bigr) .\]
 	We say that $H$ is a complex Fourier integral operator of $G$-invariant Szeg\H{o} type of order ${k\in\mathbb Z}$ if~$H$ is smoothing away $\mu^{-1}(0)$ and $H$ has the following microlocal expression. Let $p\in \mu^{-1}(0)$, and let $D$ a local neighborhood of $p$ with local coordinates $(x_1,\dots x_{2n+1})$. Then, the distributional kernel of $H$ satisfies
 	\[H(x,y)\equiv \int^\infty_0 {\rm e}^{{\rm i} t \Phi_-(x,y)}a_-(x, y, t) {\rm d}t\]
 	on $D$ where \[a_-\in S^{k-\frac{d}{2}}_{{\rm cl }}\bigl(D\times D\times\mathbb{R}_+,T^{*0,n_-}X\boxtimes\bigl(T^{*0,q}X\bigr)^*\bigr) ,\] $a_+=0$ if $q\neq n_+$, where $\Phi_-$ is the phase of the $G$-invariant Szeg\H{o} kernel. Furthermore, we write~${\sigma^0_{H,-}(x,y)}$ to denote the leading term of the expansion of $a_{-}$.
 	
 	Let $\Psi_k(X)^G$ denote the space of all complex Fourier integral operators of Szeg\H{o} type of order~$k$.
 \end{Definition}

Here we adapt the discussion before \cite[Theorem 4.9]{gh} to $G$-action case. Let
\[
	\hat R:=\frac{1}{2}S^{G}(-{\rm i}R+(-{\rm i}R)^*)S^{G}\colon\ \Omega^{0,n_-}(X)\To\Omega^{0,n_-}(X),
\]
where $(-{\rm i}R)^*$ is the adjoint of $-iR$ with respect to $( \cdot | \cdot )$.
Then, we note that $\hat R\in\Psi_{n+1}(X)^G$ with $\sigma^0_{\hat R,-}(x,x)\neq0$, for every $x\in X$.
Let $\hat H\in\Psi_{n-1}(X)^G$ with
\begin{equation}\label{e-gue210706ycdh}
		S^G\hat H=\hat H=\hat HS^G ,\qquad
		\hat R\hat H\equiv \hat H\hat R\equiv S^G .
\end{equation}
Note that $\hat H$ is uniquely determined by \eqref{e-gue210706ycdh}, up to some smoothing operators. The operator~$\hat H$ can always be found by setting $\hat H= S^G \tilde{H} S^G$ for a given pseudodifferential operator $\tilde{H}$. In fact, the first equation in \eqref{e-gue210706ycdh} is always satisfied. Furthermore, the second equation can be solved by an easy application of the stationary phase formula.

Now, we recall the $G$-invariant version of \cite[Theorem 4.9]{gh}.

\begin{Theorem}\label{t-gue210706ycdg}
	Let $q=n_-$.
	Let $f, g\in{C}^\infty(X)$, then we have
	\[
		T^G[f]\circ T^G[g]-\sum^N_{j=0}\hat H^j T^G\big[{\hat C_{j}(f,g)}\big]\in\Psi_{n-N-1}^G(X)
	\]
	for every $N\in\mathbb N_0$, where $\hat C_{j}(f,g)\in{C}^\infty(X)$, $\hat C_{j}$ is a universal bidifferential operator of order $\leq 2j$, $j=0,1,\dots$, and
$\hat C_{0}(f,g)=f\cdot g$, $
			\hat C_{1}(f,g)-\hat C_{1}(g,f)= {\rm i} \{f,g\}$.
\end{Theorem}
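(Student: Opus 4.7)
The plan is to adapt the proof of the action-free case \cite[Theorem 4.9]{gh} to the $G$-invariant setting, working microlocally near points of $\mu^{-1}(0)$. By Theorem \ref{thm:kfourierszego} and the theory of \cite{hsiaohuang}, both $S^G$ and the class $\Psi^G_\bullet(X)$ admit an explicit description as complex Fourier integral operators of $G$-invariant Szeg\H{o} type with phase $\Phi_-$, and both are smoothing away from $\mu^{-1}(0)$. Hence all computations can be restricted to a coordinate neighborhood $D$ of a fixed point $p \in \mu^{-1}(0)$.

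The key computation is an iterated application of the Melin--Sj\"ostrand complex stationary phase method to the oscillatory integral representing
\[
T^G[f] \circ T^G[g] = S^G \circ f \circ S^G \circ g \circ S^G .
\]
I would insert the oscillatory integral representations of each of the three factors, Taylor expand $f$ and $g$ along directions transversal to the critical set, and then apply stationary phase in the internal integration variables. This produces an asymptotic expansion whose $j$-th term is an oscillatory integral with phase $\Phi_-$ and with a symbol obtained from $(f,g)$ by a universal bidifferential operator of order $\leq 2j$; this collection defines $\hat{C}_j(f,g)$, with $\hat{C}_0(f,g)=fg$ coming directly from the leading term. Non-degeneracy of the stationary phase in the directions along $\underline{\mathfrak{g}}$ is ensured by \eqref{e-gue200120yydI}, and non-degeneracy in the remaining transversal directions is guaranteed by the pseudo-K\"ahler structure, exactly as in \cite{gh}.

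I would then identify this expansion with the claimed form $\sum_j \hat{H}^j T^G[\hat{C}_j(f,g)]$. The defining property \eqref{e-gue210706ycdh} states that $\hat{H}$ is a parametrix of $\hat{R}$ on the range of $S^G$, and each composition with $\hat{H}$ implements precisely the shift in order that matches the loss of one $t$-power occurring in a single stationary phase reduction. Matching the two expansions order by order gives the claimed form, with the tail lying in $\Psi^G_{n-N-1}(X)$. The second initial condition is then verified by computing the antisymmetric part of the subleading coefficient: by construction of $\Phi_-$ from the CR data on $X$ and its relation to the symplectic form on $M_G$, this antisymmetric part reproduces $\mathrm{i}\{f,g\}$ by the same local calculation as in the K\"ahler case treated in \cite{schl1,bms} and \cite[Theorem 4.9]{gh}.

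The main obstacle is the complex stationary phase analysis with a critical manifold of positive codimension $d$ and with an indefinite Levi form of signature $(n_-,n_+)$; non-degeneracy in these more delicate directions is exactly what the assumption \eqref{e-gue200120yydI} guarantees and is what permits the Melin--Sj\"ostrand machinery to be applied here as in \cite{gh}. The remaining delicate points are to verify the universality and bidifferential character of $\hat{C}_j$ (so that it does not depend on the chosen coordinates or the local representation of $\Phi_-$), to keep precise track of Szeg\H{o}-type orders after composition with $\hat{H}^j$, and to confirm that after $N$ subtractions the remainder indeed belongs to $\Psi^G_{n-N-1}(X)$; these are bookkeeping arguments parallel to the non-$G$-invariant case in \cite{gh}.
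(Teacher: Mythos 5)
The paper does not prove this theorem at all: it is explicitly recalled as the $G$-invariant version of \cite[Theorem 4.9]{gh}, so there is no internal proof to compare against. Your outline --- composing the oscillatory-integral representations of $S^G$, applying Melin--Sj\"ostrand complex stationary phase (with non-degeneracy along $\underline{\mathfrak{g}}$ supplied by \eqref{e-gue200120yydI}), extracting universal bidifferential operators $\hat C_j$, and using that $\hat H$ is a parametrix of $\hat R$ on the range of $S^G$ to account for the order shift so that the remainder lands in $\Psi^G_{n-N-1}(X)$ --- is a faithful reconstruction of the strategy of that cited reference, and I see no gap in it at the level of detail offered.
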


Eventually, we recall the following theorem from \cite{gh}.
	
	 \begin{Theorem} \label{thm:compositionFouriercomponents}
	 	With the same assumptions as above, let $f, g\in{C}^\infty(X)^G$. Let $q=n_-$. Then, as $k\gg1$,
	 	\[
	 		\norm{T^G_{f,k}\circ T^G_{g,k}-T^G_{g,k}\circ T^G_{f,k}-\frac{1}{k} T^G_{ {\rm i} \{f,g\},k}}=O\bigl(k^{-2}\bigr),\]
	 	and
	 	\[
	 		\norm{T^G_{f,k}\circ T^G_{g,k}-\sum^N_{j=0}k^{-j} T^G_{C_j(f,g),k}}=O\bigl(k^{-N-1}\bigr)
	 	\]
	 	in $L^2$ operator norm, for every $N\in\mathbb N$, where $C_j(f,g)\in{C}^\infty(X)^G$,
	 	$C_j$ is a universal bidifferential operator of order $\leq 2j$, $j=0,1,\dots$, and
$C_{0}(f,g)=f\cdot g $, $
	 			C_{1}(f,g)-C_{1}(g,f)= {\rm i} \{f,g\}$.
	 	
	 	Moreover, the star product
	 	\[
	 		f*g= \sum_{j=0}^{+\infty} C_{j}(f,g) \nu^{-j} ,
	 	\]
	 	$f, g\in{C}^\infty(X)^G_0$, is associative.
	 \end{Theorem}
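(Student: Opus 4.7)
The plan is to start from the microlocal composition formula in Theorem~\ref{t-gue210706ycdg}, namely
\[
T^G[f]\circ T^G[g]-\sum_{j=0}^{N}\hat H^{\,j} T^G\big[\hat C_{j}(f,g)\big]\in \Psi_{n-N-1}^G(X),
\]
and then project onto the $k$-th isotype of the $S^1$-action. Setting $C_j:=\hat C_j$, this reduces the theorem to two claims: first, that for each $j$ the operator $\hat H^{\,j}T^G[\hat C_j(f,g)]$, restricted to $L^2_{(0,n_-),k}(X)^G$, coincides with $k^{-j}T^G_{C_j(f,g),k}$ up to an $O(k^{-j-1})$ error in operator norm; and second, that any $A\in\Psi_{n-N-1}^G(X)$ has operator norm $O(k^{-N-1})$ on $L^2_{(0,n_-),k}(X)^G$.

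For the first claim I would exploit that $-{\rm i}R$ acts as multiplication by $k$ on the $k$-th Fourier component, so that $\hat R=\tfrac12 S^G(-{\rm i}R+(-{\rm i}R)^*)S^G$ restricts to $k\,S^G_{G,k}+O(1)$ on $H^G_k(X)$. Inverting the defining relation $\hat R\hat H\equiv S^G$ on each Fourier component gives $\hat H|_{H^G_k(X)}=k^{-1}S^G_{G,k}+O(k^{-2})$ in operator norm, and iterating yields $\hat H^{\,j}|_{H^G_k(X)}=k^{-j}S^G_{G,k}+O(k^{-j-1})$. Composing with $T^G[\hat C_j(f,g)]$ then produces the desired leading term $k^{-j}T^G_{C_j(f,g),k}$.

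The main obstacle is the second claim, the uniform $O(k^{-N-1})$ bound on the $k$-th Fourier component of an element of $\Psi_{n-N-1}^G(X)$. By Definition~\ref{d-gue210803yyd} the Schwartz kernel has the oscillatory form $\int_0^{\infty}{\rm e}^{{\rm i}t\Phi_-(x,y)}a_-(x,y,t)\,{\rm d}t$ with $a_-\in S^{n-N-1-d/2}_{\rm cl}$; the $k$-th Fourier component amounts to applying the averaging $\frac{1}{2\pi}\int_0^{2\pi}{\rm e}^{-{\rm i}k\theta}({\rm e}^{{\rm i}\theta})^*(\cdot)\,{\rm d}\theta$ and then evaluating by stationary phase in $(t,\theta)$, localised near $\mu^{-1}(0)$ in view of the off-diagonal decay statement from Theorem~\ref{thm:kfourierszego}. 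Carrying out a uniform semi-classical stationary phase estimate along $\mu^{-1}(0)$, exploiting the transversality of $R$ to both $\underline{\mathfrak{g}}$ and the CR structure, is the technical heart of the argument and is where the microlocal Szeg\H{o} calculus of \cite{gh} is indispensable.

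Once the asymptotic expansion is in hand, the commutator estimate follows by subtracting the expansion for $T^G_{g,k}\circ T^G_{f,k}$ and using $\hat C_0(f,g)=fg$ together with $\hat C_1(f,g)-\hat C_1(g,f)={\rm i}\{f,g\}$ from Theorem~\ref{t-gue210706ycdg}. For associativity of the induced formal star product, I would expand both sides of the operator identity $(T^G_{f,k}\circ T^G_{g,k})\circ T^G_{h,k}=T^G_{f,k}\circ (T^G_{g,k}\circ T^G_{h,k})$ into the asymptotic series in $k^{-1}$, equate the coefficients of each power $k^{-j}$, and upgrade the resulting operator identities to pointwise identities of $C^\infty(X)^G$-functions by appealing to the norm lower bound in part~(a) of Theorem~\ref{thm:quant}.
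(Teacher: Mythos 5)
Your proposal is correct in outline and follows essentially the same route as the source: the paper does not prove Theorem~\ref{thm:compositionFouriercomponents} itself but recalls it from \cite{gh}, where the argument is precisely the one you describe --- project the microlocal composition formula of Theorem~\ref{t-gue210706ycdg} onto the $k$-th $S^1$-isotype, use that $\hat H$ acts as $k^{-1}S^{(n_-)}_{G,k}$ there, and bound the Fourier components of the $\Psi^G_{n-N-1}(X)$ remainder by $O\bigl(k^{-N-1}\bigr)$ via stationary phase near $\mu^{-1}(0)$. One small sharpening: since $-{\rm i}R$ and its adjoint both act exactly as $k\cdot\mathrm{Id}$ on the $k$-th Fourier component, the relation $\hat R\hat H\equiv S^G$ gives $\hat H|_{H^G_k(X)}=k^{-1}S^{(n_-)}_{G,k}+O(k^{-\infty})$ rather than just $O\bigl(k^{-2}\bigr)$.
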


\section{Berezin transform}
	
\subsection{Vector valued reproducing kernel}
	
Here, we recall some definitions concerning reproducing kernel Hilbert spaces for vector valued functions, we refer to \cite{bm,pe} (see also \cite[Section~2]{cvt}).
	
\begin{Definition}Let $X$ be a manifold and $E$ be a vector bundle on $X$. A $E$-valued reproducing kernel Hilbert space on $X$ is a Hilbert space $H$ such that
		\begin{itemize}\itemsep=0pt
			\item[(1)] the elements of $H$ are sections of the bundle $E\rightarrow X$,
			\item[(2)] for all $x\in X$ there exists a positive constant $C_x$ such that
			$\lVert s(x)\rVert_{E_x} \leq C_x \lVert s\rVert_H $ for each~${s\in H}$.
		\end{itemize}
	\end{Definition}
	
	In this article, we shall consider the case where $X$ is CR manifold satisfying the assumption of the Introduction and $H$ is the Hilbert space $H^{G}(X)$. Now, let $q=n_-$, $x\in X$, and $s\in H^{G}_k(X)$ then $s(x)$ is in the vector space \smash{$T^{* (0,q)}_xX$} which is endowed with the Hermitian metric $\langle \cdot \vert\cdot \rangle$. The Hilbert space \smash{$H^{G}_k(X)$} is a vector valued reproducing kernel Hilbert space on $X$ with inner product $( \cdot\vert\cdot )$. The evaluation map
	\[\mathrm{ev}_x \colon\ H^{G}_k(X) \rightarrow T^{* (0,q)}X,\qquad \mathrm{ev}_x(s)=s(x)\]
	is a bounded operator and the reproducing kernel associated with $H^{G}_k(X)$ is $S^{G}$. The kernel~$S^{G}$ reproduces the value of a section $s\in H^{G}_k(X)$ at point $x\in X$. Indeed, for all $x\in X$ and~\smash{$v\in T^{* (0,q)}_xX$}
	$\mathrm{ev}_x^* v = S^{G}_k(\cdot, x) v $
	so that
	$\langle s(x)\vert v \rangle = \bigl(s, S^{G}_k(\cdot, x)v\bigr)$.
	Thus, for each $x\in X$ and~\smash{$v \in T^{* (0,q)}_xX$} the $(0,q)$-forms $s_{x,v}=S^{G}_k(\cdot, x) v$ is called the coherent state (of level $k$) associated to $x$ and $v$.
	
	\begin{Definition} \label{def:cov}
		The covariant Berezin symbol \smash{$\sigma^{(k)}_k(A)$} of an equivariant operator of Szeg\H{o} type~${A\in \Psi^G_k(X)}$ is
		\[\sigma_v^{(k)}(A) \colon\ X \rightarrow \mathbb{C},\qquad x\mapsto \sigma_v(A)(x):=\frac{(s_{x,v}\vert As_{x,v})}{(s_{x,v}\vert s_{x,v})} \]
		for a given fixed $v\in T^{*(0,q)}_xX$.
	\end{Definition}

	\begin{Remark}
		The definition of coherent states goes back to Berezin. A coordinate independent version and extensions to line bundles were given by Rawnsley \cite{r}, we mainly refer to \cite{schl2} where the coherent vectors are parameterized by the elements of $L^*\setminus 0$, where $L$ is a positive line bundle of a given Hodge manifold. In the following we shall generalize the coherent states in our setting.
	\end{Remark}
	
	Let $f$ be a $G$ invariant function on $\mu^{-1}(0)$, for any extension $\widetilde{f}$ on $X$, we assign to it its Toeplitz operator $T^{G}_{k}\big[\widetilde{f}\big]$ and then assign to it the covariant symbol, as in Definition \ref{def:cov} \[\sigma_v^{(k)}\bigl(T^{G}_{k}\big[\widetilde{f}\big]\bigr) \colon\ X \rightarrow \mathbb{C}\] which is again an element of $C^{\infty}(X)^{G}$ and thus it defines a circle invariant function on the CR reduction $X_G$.

 \subsection{Berezin transforms for Toeplitz operators}
	
	Recall that $\pi\colon \mu^{-1}(0)\rightarrow X_G$ is the projection. Given a smooth function $f\in C^{\infty}(X_G)$ invariant under the circle action, we define the Berezin transform of level~$k$.
	
 \begin{Definition} The map
\smash{$C^{\infty}(X_G)^{S^1} \rightarrow C^{\infty}(X_G)^{S^1} $}, \smash{$ f \mapsto I_k^G[f]:=\sigma^{(k)}_v\bigl(T^{G}_{k}[f]\bigr)$}
 		is called the Berezin transform of level $k$.
\end{Definition}

In particular, we are interested in studying $I_k^G[f]$ on a neighborhood $U$ of $p$ in $\mu^{-1}(0)$.
 	
 	\begin{Theorem} \label{thm:main}
 		Let $p\in \mu^{-1}(0)$ and let $U$ a local neighborhood of $p$. Then, if $q= n_-$, there exist local coordinates $(x_1,\dots, x_{2n+1})$ on $U$ such that the Berezin transform $I^{G}_k[f]$ evaluated at the point $x\in U$ has a complete asymptotic expansion in decreasing integer powers of $k$
 		\[I^G_k[f](x)\sim \sum_{j=0}^{+\infty}I_j[f](x) \frac{1}{k^j} , \]
 		as $k$ goes to infinity, where $I_j \colon C^{\infty}(X_G)^{S^1}\rightarrow C^{\infty}(X_G)^{S^1}$ are maps such that $I_0[f]=f$.
 	\end{Theorem}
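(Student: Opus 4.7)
The plan is to write $I_k^G[f](x)$ as a ratio of two oscillatory integrals and apply complex stationary phase to each, in the spirit of the Berezin-transform calculations of \cite{bms,schl2} adapted to our pseudo-K\"ahler CR setting. First I would use the reproducing property: since $s_{x,v}=S^{(n_-)}_{G,k}(\cdot,x)v$ and $T_k^G[f]s_{x,v}$ both lie in $H_k^G(X)$, the identity $(u\mid s_{x,v})=\langle u(x),v\rangle$ for $u\in H_k^G(X)$ yields
\[
(s_{x,v}\mid s_{x,v})=\langle S^{(n_-)}_{G,k}(x,x)v,v\rangle
\]
and
\[
(s_{x,v}\mid T_k^G[f]s_{x,v})=\left\langle \int_X S^{(n_-)}_{G,k}(x,w)f(w)S^{(n_-)}_{G,k}(w,x)v\,{\rm d}V(w),v\right\rangle,
\]
so that $I_k^G[f](x)$ is the quotient of the two.

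Next I would apply Theorem~\ref{thm:kfourierszego} with $f\equiv 1$ (giving $T_k^G[1]=S^{(n_-)}_{G,k}$) to obtain, on a coordinate neighborhood $U$ of $p\in\mu^{-1}(0)$, the local form
\[
S^{(n_-)}_{G,k}(x,y)={\rm e}^{{\rm i}k\Psi(x,y)}b^S(x,y,k)+O(k^{-\infty})
\]
with $b^S\in S^{n-d/2}_{\mathrm{loc}}$ and diagonal leading term $b_0^S(x,x)$ given explicitly in Theorem~\ref{thm:kfourierszego}. Because $S^{(n_-)}_{G,k}$ is smoothing off $\mu^{-1}(0)$, the numerator integral localizes, up to $O(k^{-\infty})$, to a small neighborhood of $x$ within $\mu^{-1}(0)$. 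The composite phase $\psi_x(w):=\Psi(x,w)+\Psi(w,x)$ vanishes at $w=x$, satisfies ${\rm Im}\,\psi_x\geq 0$, and its real critical set coincides with the $G\times S^1$-orbit through $x$; by the phase analysis in \cite{gh,hsiaohuang}, its complex Hessian is nondegenerate transversally to this orbit.

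I would then invoke the Melin--Sj\"ostrand complex stationary phase method (with critical submanifold) on the numerator, producing an asymptotic expansion in decreasing integer powers of $k$ whose leading coefficient factors as $f(x)$ times a universal prefactor built from $b_0^S(x,x)$, the orbit volume $V_{{\rm eff}}(x)$, and the transverse Hessian determinant of $\psi_x$. The denominator $\langle S^{(n_-)}_{G,k}(x,x)v,v\rangle$ admits its own asymptotic expansion from the diagonal case of Theorem~\ref{thm:kfourierszego}, and a direct calculation shows that the \emph{same} universal prefactor appears at leading order. Dividing the two expansions gives $I_k^G[f](x)\sim\sum_{j\geq 0}I_j[f](x)k^{-j}$; the cancellation of leading prefactors forces $I_0[f](x)=f(x)$, while the higher $I_j$ arise from the bidifferential contributions of stationary phase and from the subleading terms of $b^S$. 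By construction each $I_j$ is $G$- and $S^1$-invariant and hence descends to an operator $C^\infty(X_G)^{S^1}\to C^\infty(X_G)^{S^1}$.

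The main obstacle I anticipate is the stationary phase computation with a critical \emph{submanifold} (the $G\times S^1$-orbit) rather than an isolated point, combined with the careful tracking of universal prefactors required to verify that the leading ratio is exactly $f(x)$ and not a nontrivial multiple. Once this cancellation is pinned down, the higher-order coefficients follow by unwinding the standard complex stationary phase expansion in the Melin--Sj\"ostrand framework.
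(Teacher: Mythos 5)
Your proposal is correct and follows essentially the same route as the paper: the numerator is recognized as the diagonal value $v^{\dagger}T^{G}_{k}\big[\widetilde{f}\big](x,x)v$ of the Toeplitz kernel, the denominator as the case $f\equiv 1$, and the expansion plus the ratio of leading terms gives $I_0[f]=f$. The only difference is that the paper does not redo the Melin--Sj\"ostrand composition by hand; it cites Theorem~\ref{thm:kfourierszego} directly for $T^{G}_{k}\big[\widetilde{f}\big]$ (whose leading diagonal symbol is already $f(x)$ times that of $S^{G}_{k}$), so the stationary-phase step you outline is absorbed into that quoted result.
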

 		\begin{proof}
 		Let $f\in C^{\infty}(X_G)^{S^1}$ and let us first study the numerator of $\sigma^{(k)}_v\bigl(T^{G}_{k}\big[\widetilde{f}\big]\bigr)$ which is
 		\begin{align*} \bigl(S^{G}_k(\cdot, x) v\big\vert T^{G}_{k}\big[\widetilde{f}\big] S^{(q)}_k(\cdot, x) v\bigr) =\int_X v^{\dagger} S^{(q)}_k(x, y) \widetilde{f}(y) S^{(q)}_k(y, x) v \mathrm{dV}_X(y) .
 		\end{align*}
 		The denominator is obtain by taking $f\equiv 1$. Thus, by Proposition \ref{thm:kfourierszego}, we have a complete asymptotic expansion in decreasing integer power of $k$, and we obtain
 		\[\bigl(S^{(q)}_k(\cdot, x) v\big \vert T^{(q)}_{k}\big[\widetilde{f}\big] S^{(q)}_k(\cdot, x) v\bigr) \sim k^d v^{\dagger}\sigma^{0}_{T^G [\widetilde{f} ]}(x, x) v +O\bigl(k^{d-1}\bigr)\]
 		where
 		\[\sigma^0_{T^G [\widetilde{f} ]}(x,x)=\frac{1}{2}\pi^{-n-1}\abs{\det L_x} \widetilde{f}(x) \tau_{x,n_-} ,\]
 		for every $x\in \mu^{-1}(0)$. Thus, taking the quotient we get
 		$I_k^G\big[\widetilde{f}\big](x)=\widetilde{f}(x)+O\bigl(k^{d-1}\bigr) $
 		for each~${x\in \mu^{-1}(0)}$.
 	\end{proof}
 	
 	Theorem~\ref{thm:main} is consequence of the asymptotic expansion of Toeplitz operators for $(0,q)$-forms and the stationary phase formula. Here, we shall mainly refer to \cite{gh} where we study Toeplitz operators on CR manifolds, see also \cite{hsiao,mm} for the asymptotic behavior of the Bergman kernel for $(0,q)$-forms, and references therein.
 	
 	\section{Quantization commutes with reduction}
 	\subsection{Quotients and complexification of groups}
 	\label{sec:quot}
 	
 	Let $G$ be a compact connected Lie group and let $\mathfrak{g}$ be its Lie algebra. Here, we recall some results proved in \cite[Section 4]{gs} and we explain how to extend it to pseudo K\"ahler manifolds. There exists a unique connected complex Lie group, $G^{\mathbb{C}}$, with the following two properties:
 	\begin{itemize}\itemsep=0pt
 		\item its Lie algebra is $\mathfrak{g}\oplus \sqrt{-1} \mathfrak{g}$,
 		\item $G$ is a maximal compact subgroup of $G^{\mathbb{C}}$.
 	\end{itemize}

 	Now, we shall prove some properties of $G$-actions on pseudo-K\"ahler manifolds. A polarization of $X$ is an integrable Lagrangian subbundle, $F$, of $TM\otimes \mathbb{C}$. The next lemma, an analogue of~\cite[Lemma 4.3]{gs}, will be used to reconcile this definition with the standard one.
 	
 	\begin{Lemma} \label{lem:gs} Let $V$ be a $($real$)$ symplectic vector space with symplectic form, $\Omega$. Let $E$ be a~Lagrangian subspace of $V\otimes \mathbb{C}$ Then there exists a unique linear mapping $K \colon V \rightarrow V$ such that
 	\begin{itemize}	\itemsep=0pt\samepage
 	\item[$(i)$] $K^2 = -I$,
 	\item[$(ii)$] $E=\big\{v+\sqrt{-1} Kv, v\in V\big\}$,
 	\item[$(iii)$] $\Omega(Kv, Kw)=\Omega(v, w)$,
 	\item[$(iv)$] The quadratic form $B(v, w)= \Omega(v, K w)$ is symmetric and non-degenerate.
 	\end{itemize}
 	\end{Lemma}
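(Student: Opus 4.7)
The plan is to adapt the proof of Guillemin--Sternberg's original \cite[Lemma~4.3]{gs}, noting that the construction of $K$ implicitly requires $E\cap\ol E=\{0\}$; this is automatic when $E$ arises as the polarization associated to an almost complex structure on $V$, which is the intended application. Under this hypothesis one has the direct sum decomposition $V\otimes\Complex=E\oplus\ol E$.

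First I would construct $K$ explicitly. Given $v\in V$, decompose $v=u+\ol u$ for a unique $u\in E$: the splitting gives a unique $u\in E$, $u'\in\ol E$ with $v=u+u'$, and then reality of $v$ together with uniqueness forces $u'=\ol u$. Writing $2u=v+\sqrt{-1}\,Kv$ defines a real-linear map $K\colon V\to V$, yielding property (ii) by construction.

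Next I would verify (i), (iii), (iv) by short calculations. For (i) I use that $E$ is a complex subspace: from $\sqrt{-1}(v+\sqrt{-1}Kv)=-Kv+\sqrt{-1}\,v\in E$ and uniqueness of the representation in (ii), one reads off $K(-Kv)=v$, hence $K^2=-I$. For (iii) and the symmetry half of (iv), I expand the Lagrangian condition $\Omega(v_1+\sqrt{-1}Kv_1,\,v_2+\sqrt{-1}Kv_2)=0$ with $\Omega$ extended $\Complex$-bilinearly; separating real and imaginary parts yields simultaneously $\Omega(Kv_1,Kv_2)=\Omega(v_1,v_2)$ (which is (iii)) and $\Omega(Kv_1,v_2)+\Omega(v_1,Kv_2)=0$. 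The second identity combined with antisymmetry of $\Omega$ shows that $B(v,w)=\Omega(v,Kw)$ is symmetric; non-degeneracy of $B$ follows from non-degeneracy of $\Omega$ together with the invertibility of $K$ (guaranteed by $K^2=-I$).

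For uniqueness, if $K,K'$ both satisfy (ii) then $\sqrt{-1}(K-K')v\in E$ for every $v\in V$, so $\sqrt{-1}(K-K')v\in E\cap\sqrt{-1}V$; but $E\cap\ol E=\{0\}$ forces $E\cap\sqrt{-1}V=\{0\}$, hence $K=K'$. The only truly subtle point in the whole argument is recognizing the $E\cap\ol E=\{0\}$ hypothesis in the ambient geometric setup---polarizations of the complexified tangent bundle of a pseudo-K\"ahler manifold, where this property is automatic from the compatibility of $J$ with $\omega$---after which the verification reduces entirely to splitting real and imaginary parts.
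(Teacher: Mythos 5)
Your proof is correct and follows essentially the argument of Guillemin--Sternberg's Lemma~4.3, which is what the paper intends: the lemma is stated here without proof, deferring entirely to \cite[Lemma 4.3]{gs}. Your observation that the hypothesis $E\cap\overline{E}=\{0\}$ is needed (and is omitted from the statement as printed, though it is automatic in the intended application and is in fact forced by conditions $(i)$--$(ii)$ once such a $K$ exists) is correct and worth flagging --- without it existence fails, e.g.\ for $E=L\otimes\mathbb{C}$ with $L\subset V$ a real Lagrangian subspace --- and your construction of $K$ from the splitting $V\otimes\mathbb{C}=E\oplus\overline{E}$ together with the real/imaginary-part computations is exactly the standard argument.
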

 	
 	Let $(M, \omega)$ be a symplectic manifold and $F$ a polarization. By the lemma we get for each~${m\in M}$ a mapping $K=J_m \colon T_mM\rightarrow T_mM$ with the properties (i), (ii), and (iii) and a~quadratic form, $B=g_m$, on $M$. Furthermore, $J$ defines an almost-complex structure on $M$ and $g$ a~pseudo-Riemannian structure. The integrability of $F$ implies that the almost-complex structure is complex. Therefore, the quadruple $(M,J,g, \omega)$ is a pseudo-K\"ahler manifold in the usual sense.
 	
 	Let $(M,\omega)$ be a compact pseudo-K\"ahler manifold and $G$ a compact connected Lie group which acts on $M$, preserving $F$. We shall prove Theorem~\ref{thm:4.4} below, which is an analogue of~\cite[Theorem~4.4]{gs} and in fact the first part of the proof is identical. However, note that \cite[Theorem~4.4]{gs} relies on the fact that the group of analytic diffeomorphisms of $M$ which preserve~$F$ is a~finite-dimensional Lie group, see \cite{k}. Here, we prove that the former result still holds true in the setting of pseudo-Riemannian manifolds (in Theorem~\ref{thm:4.4}, we assume that $G^{\mathbb{C}}$ is simply connected, for the general case we refer to \cite[Theorem~4.4]{gs}).
 	
 	Let $\mathrm{Iso}(X, g)$ be the group of isometries from $(M, g)$ onto itself, that is $f \in \mathrm{Iso}(X, g)$ if and only if $f$ is a smooth diffeomorphism and $f^*g=g$. Let $\bigl(M, T^{1,0}M\bigr)$ be a connected complex manifold and denote by $\mathrm{Iso}(M, g)$ the group of isometries on $M$ with respect to some Riemannian metric $g$. Let $\mathrm{Aut}_J(M)$ be the group of complex automorphisms on $M$, that is $f\in\mathrm{Aut}_J(M)$ if and only if $f \colon M\rightarrow M$ is a smooth diffeomorphism satisfying ${\rm d}f\bigl(T^{1,0}M\bigr) \subseteq T^{1,0}M$.
 	
 	\begin{Theorem} \label{thm:4.4} Let $G^{\mathbb{C}}$ be simply-connected. The action of $G$ can be canonically extended to an action of $G^{\mathbb{C}}$, preserving the pseudo-K\"ahler structure of $M$.
 	\end{Theorem}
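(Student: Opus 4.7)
The plan is to extend the $G$-action to $G^{\mathbb{C}}$ at the infinitesimal level first, then integrate. The $G$-action on $M$ is already holomorphic, so for each $\xi \in \mathfrak{g}$ the fundamental vector field $\xi_M$ is real holomorphic, i.e.\ $\mathcal{L}_{\xi_M} J = 0$. The candidate Lie algebra homomorphism $\rho\colon \mathfrak{g}^{\mathbb{C}} = \mathfrak{g} \oplus \sqrt{-1}\,\mathfrak{g} \to \mathfrak{X}(M)$ is given by
\[
\rho(\xi) := \xi_M, \qquad \rho(\sqrt{-1}\,\xi) := J\xi_M, \qquad \xi \in \mathfrak{g},
\]
and I would extend $\mathbb{R}$-linearly. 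The Kähler-type arguments in \cite{gs} are formal, and the positivity of the metric plays no role in this step, so the construction carries over verbatim to the pseudo-Kähler setting.

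The key check is that $\rho$ respects brackets. Using $\mathcal{L}_{\xi_M} J = 0$ one has $[\xi_M, J\eta_M] = J[\xi_M, \eta_M]$; combined with the vanishing of the Nijenhuis tensor of the integrable $J$, this yields $[J\xi_M, J\eta_M] = -[\xi_M, \eta_M] = -[\xi,\eta]_M$. Matching these three identities against the structure constants of $\mathfrak{g}^{\mathbb{C}}$ (with convention $[\sqrt{-1}\,\xi, \sqrt{-1}\,\eta] = -[\xi,\eta]$) shows that $\rho$ is a Lie algebra homomorphism. A parallel computation using $N_J = 0$ also shows $\mathcal{L}_{J\xi_M} J = 0$, so every vector field in the image of $\rho$ is holomorphic.

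Next I would integrate $\rho$ to a global $G^{\mathbb{C}}$-action. Since $M$ is compact, every smooth vector field is complete, so each element of $\rho(\mathfrak{g}^{\mathbb{C}})$ generates a global one-parameter group of diffeomorphisms of $M$. By Palais' integrability theorem for Lie algebra actions on a manifold by complete vector fields, together with the hypothesis that $G^{\mathbb{C}}$ is simply connected, $\rho$ integrates to a smooth action $\mu^{\mathbb{C}}\colon G^{\mathbb{C}} \times M \to M$ whose restriction to $G$ recovers the original action. The canonicity is automatic: two extensions of the same infinitesimal action of a simply connected group must coincide.

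Finally, preservation of the pseudo-Kähler structure along the extended action. Because each generator $J\xi_M$ is holomorphic, the complex structure $J$ is preserved by the full $G^{\mathbb{C}}$-action, and the polarization $F \subset TM \otimes \mathbb{C}$ determined by $J$ via Lemma~\ref{lem:gs} is likewise $G^{\mathbb{C}}$-invariant; the symplectic form $\omega$ and the pseudo-Riemannian metric $g$ remain invariant under the compact subgroup $G$. The main obstacle I anticipate is precisely verifying the Nijenhuis identity $[J\xi_M, J\eta_M] = -[\xi_M,\eta_M]$ for non-positive $g$: in the genuine Kähler case one often invokes the Kähler identities and the fact that $\xi_M$ is a real Killing field, but here $g$ is only non-degenerate, so the argument must be purely complex-analytic (via $N_J = 0$ and $\mathcal{L}_{\xi_M} J = 0$) rather than metric-based, which is why the simply-connected hypothesis on $G^{\mathbb{C}}$ and the compactness of $M$—not any positivity of $g$—are doing the real work.
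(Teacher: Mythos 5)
Your proposal is correct, and its infinitesimal half coincides with the paper's: you both use the map $\xi^{(1)}+\sqrt{-1}\,\xi^{(2)}\mapsto \xi^{(1)}_M+J\xi^{(2)}_M$ (the paper's $\tau$), and the bracket identities you derive from $\mathcal{L}_{\xi_M}J=0$ and $N_J=0$ are exactly the metric-free computations (4.2)--(4.3) of \cite{gs}, which the paper cites rather than reproves. The genuine divergence is the integration step. The paper's route is to exhibit a finite-dimensional transformation group containing the flows: Lemma~\ref{lem:finliegroup} shows that $\mathrm{Iso}(M,g)\cap \mathrm{Aut}_J(M)$ is a Lie group (a closed subgroup of the isometry group of the pseudo-Riemannian manifold $(M,g)$, which is a Lie group by \cite{k,ph}), and simple connectedness of $G^{\mathbb{C}}$ then lets one integrate $\tau$ to a homomorphism of Lie groups into it. You instead invoke Palais' integrability theorem, with completeness of all vector fields supplied by compactness of $M$, which needs no ambient transformation group at all. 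Besides being more elementary, your route is more robust on one delicate point: the fields $J\xi_M$ are holomorphic but in general are \emph{not} Killing fields of $g$ --- with the paper's conventions $\iota_{J\xi_M}\omega=-g(\xi_M,\cdot)$, so their flows need not preserve $\omega$ or $g$ --- hence it is not evident that the image of $\tau$ lies in the Lie algebra of $\mathrm{Iso}(M,g)\cap\mathrm{Aut}_J(M)$; Palais' theorem sidesteps this issue entirely (the natural receptacle for the paper's style of argument would rather be $\mathrm{Aut}_J(M)$ alone, as in \cite{gs}). Correspondingly, your closing paragraph states precisely what is actually preserved --- $J$, equivalently the polarization $F$, by all of $G^{\mathbb{C}}$, while $\omega$ and $g$ stay invariant only under the compact subgroup $G$ --- which matches the content of the paper's proof and of \cite[Theorem~4.4]{gs}, and your diagnosis that compactness and simple connectedness, not any positivity of $g$, carry the argument is accurate.
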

 	\begin{proof}
 		Let
$\tau \colon \mathfrak{g}^{\mathbb{C}} \rightarrow $ (real) vector fields on $M$
 		be the mapping, \smash{$\xi^{(1)}+\sqrt{-1} \xi^{(2)} \mapsto \xi^{(1)}_M+J \xi^{(2)}_M$}
 		where $\xi^{(1)}, \xi^{(2)}\in \mathfrak{g}$. By \cite[equations~(4.2) and (4.3)]{gs}, $\tau$ is a morphism of Lie algebras. Moreover, by~(4.2), if $\eta \in \mathfrak{g}^{\mathbb{C}}$, $\tau(\eta)$ is a vector field preserving $F$. By Lemma \ref{lem:finliegroup} below, $\mathrm{Iso}(M, g)\cap \mathrm{Aut}_J(M)$ is a~(finite-dimensional) Lie group; therefore, if $G^{\mathbb{C}}$ is simply-connected, $\tau$ can be extended uniquely to a morphism of Lie groups.
 	\end{proof}

 	\begin{Lemma} \label{lem:finliegroup} The group $\mathrm{Iso}(M, g)\cap \mathrm{Aut}_J(M)$ is a Lie group.
 	\end{Lemma}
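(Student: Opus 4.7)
The plan is to exhibit $\mathrm{Iso}(M,g)\cap \mathrm{Aut}_J(M)$ as a closed subgroup of a finite-dimensional Lie group and then invoke Cartan's closed subgroup theorem. The natural ambient group to use is $\mathrm{Aut}_J(M)$ itself, which is already known to be a Lie group by purely complex-analytic considerations that do not involve $g$; all that then remains is to check that the isometry condition cuts out a closed subset.

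First, I would invoke the classical theorem of Bochner and Montgomery: for a compact complex manifold $(M,J)$, the group $\mathrm{Aut}_J(M)$ of biholomorphic self-diffeomorphisms is a finite-dimensional complex Lie group acting smoothly on $M$, with Lie algebra isomorphic to the space of holomorphic vector fields on $M$. Compactness of $M$ is essential here and is part of our standing assumption; in our application $M$ is the compact pseudo-K\"ahler manifold introduced at the beginning of the section.

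Second, I would show that $\mathrm{Iso}(M,g)\cap \mathrm{Aut}_J(M)$ is closed in $\mathrm{Aut}_J(M)$ in the Lie group topology. Since $(M,\omega,J)$ is pseudo-K\"ahler with $g(X,Y)=\omega(X,JY)$, any $f\in \mathrm{Aut}_J(M)$ automatically satisfies ${\rm d}f\circ J=J\circ {\rm d}f$, so $f^*g=g$ holds if and only if $f^*\omega=\omega$. Because the action of $\mathrm{Aut}_J(M)$ on $M$ is smooth, the map $f\mapsto f^*\omega$ from $\mathrm{Aut}_J(M)$ to the space of smooth $2$-forms on $M$ is continuous; hence the preimage of the single element $\omega$, which coincides with $\mathrm{Iso}(M,g)\cap \mathrm{Aut}_J(M)$, is closed.

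Finally, by Cartan's closed subgroup theorem every closed subgroup of a Lie group is itself a Lie subgroup, and applying this to $\mathrm{Iso}(M,g)\cap \mathrm{Aut}_J(M)\subset \mathrm{Aut}_J(M)$ yields the claim. The only genuine hurdle in this scheme is picking the right ambient Lie group so that the constraint of preserving the pseudo-Riemannian metric becomes a manifestly closed condition; once Bochner--Montgomery is in hand, the remaining steps are routine. An alternative, slightly heavier route would be to first establish (via the pseudo-Riemannian analogue of Myers--Steenrod) that $\mathrm{Iso}(M,g)$ is a Lie group and then cut by the closed condition of preserving $J$, but the approach through $\mathrm{Aut}_J(M)$ seems cleaner in this indefinite-signature setting.
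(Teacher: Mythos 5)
Your proof is correct, but it runs in the opposite direction from the paper's. The paper takes $\mathrm{Iso}(M,g)$ as the ambient finite-dimensional Lie group, invoking the fact (due to Kobayashi and, in the indefinite-signature case, Pham) that the isometry group of a pseudo-Riemannian manifold is a Lie group, and then observes that the further condition of preserving $J$ is closed in the compact-open topology; Cartan's closed subgroup theorem finishes the argument. You instead take $\mathrm{Aut}_J(M)$ as the ambient group via Bochner--Montgomery, reduce the isometry condition to $f^*\omega=\omega$ using $g(\cdot,\cdot)=\omega(\cdot,J\cdot)$ and the fact that biholomorphisms commute with $J$, and then cut by this closed condition. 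Both routes are legitimate here. Your version has the advantage of bypassing the pseudo-Riemannian Myers--Steenrod-type theorem entirely --- which is precisely the nonstandard ingredient the paper must cite a recent reference for --- at the price of leaning on compactness of $M$ (needed for Bochner--Montgomery, but a standing hypothesis in this section) and on the integrability of $J$. The identity $f^*g=g\iff f^*\omega=\omega$ for $f\in\mathrm{Aut}_J(M)$ is also a small extra observation specific to the pseudo-K\"ahler structure that the paper's route does not need. Your concluding remark correctly identifies the paper's actual argument as the ``alternative, slightly heavier route.''
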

 	\begin{proof}
 		By \cite{k,ph}, we have that $\mathrm{Iso}(M, g)$ is a finite-dimensional Lie group, since $(M, g)$ is a~pseudo-Riemannian manifold. Recall that	a subgroup of a Lie group is itself a Lie group if it is a closed subgroup.
 		
 		Now, consider the subgroup $\mathrm{Iso}(M, g, J)\subset \mathrm{Iso}(M, g)$, consisting of those diffeomorphisms that preserve both the metric
 		$g$ and the complex structure $J$. The preservation of $g$ and	$J$ by a~diffeomorphism are smooth conditions. Thus, we consider the set $\mathrm{Iso}(M, g, J)$ as the solution set of these equations within the group $\mathrm{Iso}(M, g)$. If a sequence of diffeomorphisms in $\mathrm{Iso}(M, g, J)$ converges to a diffeomorphism in the topology of
 		$\mathrm{Iso}(M, g)$ (which is the compact-open topology), the limit will also preserve $g$ and
 		$J$.
 		
 		Thus, $\mathrm{Iso}(M, g, J)$ is a closed subgroup of the Lie group $\mathrm{Iso}(M, g)$, it is itself a Lie group.
 	\end{proof}

 	Recall now that $(M, \omega)$ is the complex pseudo-K\"ahler manifold obtained by quotienting out by the circle action on $X$, $M:=X/S^1$. The CR action of $G$ on $X$ descends to a complex and Hamiltonian action on $M$ with moment map $\Phi \colon M\rightarrow \mathfrak{g}^*$. Thus, we form the reduced space~${M_G:= \Phi^{-1}(0)/G}$.
 	Let $M_s$ be the saturation of $\Phi^{-1}(0)$ with respect to $G^{\mathbb{C}}$,
 	\[M_s:=\big\{g\circ x \mid x\in \Phi^{-1}(0),\, g\in G^{\mathbb{C}} \big\} , \]
 	the points of $M_s$ are called stable points. In the same way as in \cite[Theorem~4.5]{gs}, one can prove the following theorem. Here, we shall retrace the proof again since we use the assumption that the pseudo-metric $g$ restricted to the orbit through any $m\in \Phi^{-1}(0)$ is non-degenerate.
 	
 	\begin{Theorem}
 		Assume that the pseudo-metric $g$ restricted to the orbit through any $m\in \Phi^{-1}(0)$ is non-degenerate and the action of $G$ on $\Phi^{-1}(0)$ is free.
 		The set $M_s$ is an open subset of $M$ and $G^{\mathbb{C}}$ acts freely on it and $M_G$ can be represented as the quotient space $M_G:=M_s/G^{\mathbb{C}}$.
 	\end{Theorem}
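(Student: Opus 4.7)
The strategy is to adapt \cite[Theorem 4.5]{gs} by systematically replacing the positive definiteness of $g$ with the hypothesis that $g$ is non-degenerate when restricted to $G$-orbits through points of $\Phi^{-1}(0)$. The proof splits naturally into three parts: openness of $M_s$, freeness of the $G^{\mathbb{C}}$-action on $M_s$, and the identification $M_G \cong M_s/G^{\mathbb{C}}$.

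For the openness, I would consider the smooth map $\Psi\colon G^{\mathbb{C}} \times \Phi^{-1}(0) \to M$, $\Psi(g, y) = g \cdot y$, whose image is by definition $M_s$. At a point $(e, x)$ with $x \in \Phi^{-1}(0)$, the image of the differential $\mathrm{d}\Psi$ equals $T_x \Phi^{-1}(0) + \underline{\mathfrak{g}}_x + J\underline{\mathfrak{g}}_x = T_x \Phi^{-1}(0) + J\underline{\mathfrak{g}}_x$, where $\underline{\mathfrak{g}}_x \subset T_x\Phi^{-1}(0)$ by $G$-invariance of $\Phi^{-1}(0)$. Using the Hamiltonian condition $2\omega(\xi_M, \cdot) = \mathrm{d}\Phi^{\xi}$ and the compatibility $g_m(X,Y) = \omega_m(X, JY)$, one computes
\[
\mathrm{d}\Phi_x^{\eta}(J\xi_M) = 2\omega_x(\eta_M, J\xi_M) = 2\, g_x(\eta_M, \xi_M),\qquad \eta,\xi\in\mathfrak{g}.
\]
Hence a vector $J\xi_M \in J\underline{\mathfrak{g}}_x$ lies in $\ker \mathrm{d}\Phi_x = T_x\Phi^{-1}(0)$ only if $g_x(\,\cdot\,, \xi_M)$ vanishes on $\underline{\mathfrak{g}}_x$, and the non-degeneracy assumption then forces $\xi_M = 0$. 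This yields the direct sum decomposition $T_xM = T_x\Phi^{-1}(0) \oplus J\underline{\mathfrak{g}}_x$, submersivity of $\Psi$ near $(e, x)$, and thus openness of $M_s$ by $G^{\mathbb{C}}$-invariance.

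For the freeness, it suffices to analyse the isotropy at a point $x \in \Phi^{-1}(0)$. If $\xi + {\rm i}\eta \in \mathfrak{g}^{\mathbb{C}}$ lies in the isotropy Lie algebra at $x$, then $\xi_M(x) + J\eta_M(x) = 0$; the direct sum decomposition above forces $\xi_M(x) = J\eta_M(x) = 0$, and the freeness of the $G$-action on $\Phi^{-1}(0)$ gives $\xi = \eta = 0$. Hence the stabilizer $G^{\mathbb{C}}_x$ is discrete for each $x \in \Phi^{-1}(0)$. To upgrade discreteness to triviality I would use the simply-connectedness of $G^{\mathbb{C}}$ provided by Theorem~\ref{thm:4.4}, together with the triviality of $G_x$ inside $G^{\mathbb{C}}_x$, which rules out non-trivial elements. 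Equivariance propagates freeness from $\Phi^{-1}(0)$ to all of $M_s$.

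For the identification $M_G \cong M_s/G^{\mathbb{C}}$, the natural map $\Phi^{-1}(0)/G \to M_s/G^{\mathbb{C}}$ is surjective by definition of $M_s$. Injectivity is the main obstacle: in the Kähler case one uses the strict convexity of $t \mapsto |\Phi(\exp({\rm i}t\eta)\cdot x)|^2$ along imaginary one-parameter subgroups, a Kempf--Ness type argument that rests essentially on positive-definiteness and is therefore not available in the pseudo-Kähler setting. I would instead argue locally via the diffeomorphism $(y, \eta) \mapsto \exp({\rm i}\eta)\cdot y$ from a neighborhood of $(x, 0)$ in $\Phi^{-1}(0) \times \underline{\mathfrak{g}}_x$ to a neighborhood of $x$ in $M_s$, supplied by Step~1; this yields injectivity locally. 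A connectedness/continuity argument along $G^{\mathbb{C}}$-orbits, using that the orbits are connected and $\Phi^{-1}(0)$ is a closed $G$-invariant submanifold, then globalizes the statement. Replacing the Kähler variational/gradient-flow argument by this purely differential-topological one, built exclusively on the transversality provided by the non-degeneracy of $g|_{\mathrm{orbit}}$, is the principal difficulty to overcome.
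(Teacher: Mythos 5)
Your transversality argument for the openness of $M_s$ is correct and is essentially the paper's own: the identity $\mathrm{d}\Phi^{\eta}_x(J\xi_M)=2\omega_x(\eta_M,J\xi_M)=2g_x(\eta_M,\xi_M)$ together with the non-degeneracy of $g$ on the orbit yields $T_xM=T_x\Phi^{-1}(0)\oplus J\underline{\mathfrak{g}}_x$, hence $M_s$ contains an open neighbourhood of $\Phi^{-1}(0)$ and is open as the $G^{\mathbb{C}}$-saturation of that neighbourhood. The same decomposition gives vanishing of the isotropy algebra, i.e., local freeness, exactly as in the paper.

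The genuine gaps are in your upgrade from local freeness to freeness and in the injectivity of $\Phi^{-1}(0)/G\to M_s/G^{\mathbb{C}}$. Simple connectedness of $G^{\mathbb{C}}$ does not rule out discrete stabilizers, and a discrete subgroup of $G^{\mathbb{C}}$ meeting the maximal compact $G$ trivially need not be trivial (take the cyclic group generated by a hyperbolic element of ${\rm SL}(2,\mathbb{C})$, which is simply connected and meets ${\rm SU}(2)$ only in the identity). So your freeness step does not close. Likewise, your ``local injectivity plus connectedness of orbits'' does not globalize: connectedness of a $G^{\mathbb{C}}$-orbit does not prevent it from returning to $\Phi^{-1}(0)$ along a different $G$-orbit. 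Both points require the global statement $\Phi^{-1}(0)\cap G^{\mathbb{C}}\cdot q=G\cdot q$, which in \cite{gs} (and in the paper, which defers to the second part of \cite[Theorem~4.5]{gs} and reuses the same mechanism in Theorem~\ref{thm:final}) is obtained from the Cartan decomposition $G^{\mathbb{C}}=PG$ together with the strict monotonicity of $t\mapsto\langle\Phi({\rm e}^{tJ\xi_M}\cdot q),\xi\rangle$, whose derivative is $2g(\xi_M,\xi_M)$. You correctly observe that this is the delicate point in the pseudo-K\"ahler setting (non-degeneracy of $g$ on the orbit does not force $g(\xi_M,\xi_M)\neq 0$ for every individual $\xi$), but the purely differential-topological substitute you propose does not actually replace it; some version of the monotonicity or a genuinely new argument is still needed here.
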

 	\begin{proof}
 		Let $V$ be a real symplectic vector space and $F$ a	Lagrangian subspace of the complexification $V\otimes \mathbb{C}$. Let $J$ and $B$ as in Lemma \ref{lem:gs}.

 		Since the origin in $\mathfrak{g}$ is a regular value of $\Phi$, then $\Phi^{-1}(0)$ is a $G$-invariant co-isotropic submanifold of $M$. Moreover, the action of $G$ on $\Phi^{-1}(0)$ is locally free and the orbits of $G$ are the leaves of the null-foliation.
 		
 		Recall that $b$ is the nondegenerate bilinear form on $HX$ such that
$b(\cdot , \cdot) = {\rm d}\omega_0(\cdot , J\cdot)$.
 		Let us denote $\mu^{-1}(0)$ by $Y$. There is a natural projection $\pi_Y \colon Y\rightarrow \Phi^{-1}(0)$. Recall that we assume that~$0$ is a regular value of $\mu$, and that the action of $G$ on $\mu^{-1}(0)$ is free. Furthermore, by hypothesis the bilinear form~$b$ is nondegenerate on $Y$. Then,
 		$\pi_Y(\underline{\mathfrak{g}}_x)=T_{\pi(x)}\Phi^{-1}(0) $
 		and
 		\[\big\{{\xi}_M(\pi(x))\in T_{\pi(x)}M \mid \xi\in \sqrt{-1} \mathfrak{g} \big\} \]
 		is a complementary space to $T_{\pi(x)}\Phi^{-1}(0)$ in $T_{\pi(x)}M$. This shows that $M_s$ contains an
 		open neighborhood, $U$, of $\Phi^{-1}(0)$. Since
 		\[M_s=\bigcup_{g\in G^{\mathbb{C}} } g U,\]
 		then $M_s$ is itself open. Thus, the stabilizer algebra of $x$ in $\mathfrak{g}^{\mathbb{C}}$ is zero; so the action of $G^{\mathbb{C}}$ on $M_s$ is locally free. To show that $G^{\mathbb{C}}$ acts freely on~$M_s$ one can proceed as in \cite[the second part of Theorem 4.5, p.~527]{gs}.
 	\end{proof}

 	\subsection[Proof of Theorem \ref{thm:qr=0}]{Proof of Theorem~\ref{thm:qr=0}}
 	\label{sec:proofof1.2}
 	
 	The action of $G$ on the line bundle, $L$, can be canonically extended to an action of $G^{\mathbb{C}}$ on $L$. The proof of this fact is identical with the proof of \cite[Theorem 5.1]{gs}, and we will omit it. Here, we just recall the infinitesimal action of an element $\eta=\sqrt{-1} \xi\in \sqrt{-1}\mathfrak{g}$ on a harmonic $L$-valued $(0,n_-)$-form $s$
 	\begin{equation} \label{eq:5.3}
 		-\nabla_{\eta_M} s-2\pi \langle\Phi(\cdot), {\xi}\rangle s .
 	\end{equation}
 	
 	Let $s$ be a harmonic $L$-valued $(0,n_-)$-form, and let $(s, s)(m)$ be the norm of $s$. By definition~$(s,s)$ is a non-negative real-valued function. By assumption, $( \,,\, )$ is invariant with respect to parallel transport; so for all $\eta=\sqrt{-1} \xi\in \sqrt{-1}\mathfrak{g}$,
$\eta_M (s,s)=(\nabla_{\eta_M}s, s)+(s, \nabla_{\eta_M}s)$.
 	Suppose now that $s$ is $G^{\mathbb{C}}$-invariant. Then, by \eqref{eq:5.3},
 	$\nabla_{\eta_M} s=-2\pi \langle\Phi(\cdot), {\xi}\rangle s$
 	so
 	$\eta_M\langle s, s \rangle=-4\pi \langle\Phi(\cdot), {\xi}\rangle \langle s, s\rangle $.
 	
 	Now let $\mathcal{Q}(M)$ be the space of harmonic $L$-valued $(0,n_-)$-forms and let $\mathcal{Q}(M_s)$ be the space of harmonic $L$-valued $(0,n_-)$-forms over $M_s$. Let $\mathcal{Q}(M)^G$ band $\mathcal{Q}(M_s)^G$. be the set of G-fixed vectors in these two spaces. The following theorem follows as in \cite[Theorem 5.2]{gs}.
 	
 	\begin{Theorem} \label{thm:5.2}
 		The canonical mapping $\mathcal{Q}(M_s)^G \rightarrow \mathcal{Q}(M_G)$ is bijective.
 	\end{Theorem}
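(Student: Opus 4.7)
My plan is to follow the geometric strategy of \cite[Theorem~5.2]{gs}, adapted to harmonic $(0,n_-)$-forms on the pseudo-K\"ahler manifold $M_s$. The canonical map is defined by descent along the principal $G^{\mathbb{C}}$-bundle $\pi_s\colon M_s \to M_G$: given $s \in \mathcal{Q}(M_s)^G$, I first upgrade $G$-invariance to $G^{\mathbb{C}}$-invariance, then descend $s$ to a section on $M_G$, and finally verify that the descended section lies in $\mathcal{Q}(M_G)$. Bijectivity is then established by pullback, with injectivity coming essentially for free.

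The central lemma is that every $s \in \mathcal{Q}(M_s)^G$ is automatically $G^{\mathbb{C}}$-invariant. To prove it I fix $\xi \in \mathfrak{g}$ and $\eta = \sqrt{-1}\xi$, and use equation \eqref{eq:5.3}: the infinitesimal action of $\eta$ on $s$ equals $-\nabla_{\eta_M} s - 2\pi\langle\Phi,\xi\rangle s$. For a $G$-invariant section, Kostant's formula gives $\nabla_{\xi_M} s = 2\pi{\rm i}\langle\Phi,\xi\rangle s$. Since $\eta_M = J\xi_M$ and $s$ is harmonic, a type-decomposition argument (analogous to the one for holomorphic sections in the K\"ahler case) yields $\nabla_{J\xi_M} s = {\rm i}\nabla_{\xi_M} s = -2\pi\langle\Phi,\xi\rangle s$. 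Substituting back into \eqref{eq:5.3} shows the $\eta$-action vanishes; since $G^{\mathbb{C}}$ is connected and simply-connected by Theorem~\ref{thm:4.4}, the infinitesimal vanishing integrates to $G^{\mathbb{C}}$-invariance.

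With the lemma, $s$ descends to a section $\tilde s$ on $M_G$. To check $\tilde s \in \mathcal{Q}(M_G)$ I invoke the intertwining relations $\pi^* g_G = \iota^* g$ and $\pi^* J_G = \iota^* J$ from the introduction, together with the descent of $L$ to $L_G$: these imply that on $G^{\mathbb{C}}$-invariant $(0,n_-)$-forms the Kodaira Laplacians of $M_s$ and $M_G$ are intertwined by pullback, so $\tilde s$ is harmonic. Surjectivity follows by pulling back $\tau \in \mathcal{Q}(M_G)$: the pullback $\pi_s^* \tau$ is tautologically $G^{\mathbb{C}}$-invariant and, by the same intertwining, lies in $\mathcal{Q}(M_s)^G$. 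Injectivity is immediate, since a section whose descent vanishes must vanish on each $G^{\mathbb{C}}$-orbit and hence on $M_s$.

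The main obstacle is the identity $\nabla_{J\xi_M} s = {\rm i}\nabla_{\xi_M} s$: in the K\"ahler case this follows at once from $\nabla s$ being of type $(1,0)$ on a holomorphic section, but in the pseudo-K\"ahler setting with indefinite $\omega$ one must simultaneously exploit both equations contained in $\square^{(n_-)} s = 0$, together with the compatibility of $\nabla^L$ with the pseudo-K\"ahler structure, in order to recover the necessary type information on $\nabla s$. A second delicate point is the Laplacian intertwining under the pseudo-Riemannian quotient $\pi_s$, which reduces to a local computation relying on the non-degeneracy condition \eqref{e-gue200120yydI}.
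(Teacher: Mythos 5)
The paper itself offers no proof of Theorem~\ref{thm:5.2} --- it simply asserts that the statement ``follows as in [GS, Theorem~5.2]'' --- so you are attempting strictly more than the paper does, and your outline is indeed the Guillemin--Sternberg strategy. Unfortunately the attempt breaks exactly at the point where the pseudo-K\"ahler case genuinely differs from the K\"ahler one, and that point is the step you yourself flag as ``the main obstacle'': the identity $\nabla_{J\xi_M}s={\rm i}\nabla_{\xi_M}s$. For $n_-=0$ this is immediate because $\overline{\partial}s=0$ literally says that the $(0,1)$-part of $\nabla s$ vanishes, so $\nabla s$ is of type $(1,0)$ and contracts with $J$ as multiplication by ${\rm i}$. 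For $n_->0$, an element of $\ker\square^{(n_-)}$ is an $L^{\otimes k}$-valued $(0,n_-)$-form satisfying $\overline{\partial}s=0$ and $\overline{\partial}^{*}s=0$; here $\overline{\partial}s$ is only the antisymmetrization of the $(0,1)$-part of the full covariant derivative on $T^{*0,n_-}M\otimes L^{\otimes k}$, so $\overline{\partial}s=0$ does \emph{not} give $\nabla^{0,1}s=0$, and the trace-type condition $\overline{\partial}^{*}s=0$ does not supply the missing symmetric part. Your proposal to ``exploit both equations in $\square^{(n_-)}s=0$ to recover the type information'' is not an argument; as stated, the identity is unproven, and for a general harmonic $(0,q)$-form it is false. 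Since everything downstream (vanishing of the $\eta$-action, hence $G^{\mathbb{C}}$-invariance, hence descent) rests on it, this is a genuine gap, not a technicality.

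Two further points in your outline also need real work and are not merely ``local computations.'' First, $G^{\mathbb{C}}$ acts on $M_s$ holomorphically but not isometrically, and for $q=n_->0$ the space $\ker\square^{(n_-)}$ depends on the metric (unlike $H^{0}$); so it is not even clear that $G^{\mathbb{C}}$ preserves $\mathcal{Q}(M_s)$, which your invariance lemma implicitly assumes. Second, the claimed intertwining of Kodaira Laplacians under $\pi_s\colon M_s\to M_G$ is delicate: pullback along a holomorphic submersion preserves $\overline{\partial}$-closedness but not co-closedness, and there is a possible degree mismatch, since $\mathcal{Q}(M_G)$ consists of $(0,n_-^G)$-forms while $\mathcal{Q}(M_s)$ consists of $(0,n_-)$-forms, and $n_-^G$ need not equal $n_-$ when the restriction of $g$ to the complexified orbits has negative directions. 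A correct write-up would have to either restrict to a setting where these coincide or build the degree shift into the ``canonical map.''
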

 	
 	It is clear that the restriction mapping $\mathfrak{r} \colon \mathcal{Q}(M)^G \rightarrow \mathcal{Q}(M_s)^G$ is injective; so, by Theorem~\ref{thm:5.2}, to prove ``quantization commutes with reduction'', it is enough to prove that $\mathfrak{r}$ is surjective. Now, we shall need the following theorem.
 	
 	\begin{Theorem} \label{thm:openanddense}
 		The set $M\setminus M_s$ is contained in a complex sub-manifold of $M$ of complex codimension greater than one.
 	\end{Theorem}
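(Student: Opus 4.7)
The plan is to adapt the Kirwan--Ness stratification of the unstable locus to the pseudo-K\"ahler setting, following the strategy of \cite[Theorem~5.4]{gs}. To begin with, I would reformulate the statement: since $M_s$ is an open $G^{\mathbb{C}}$-invariant subset of $M$ by the previous theorem, its complement $M \setminus M_s$ is closed and $G^{\mathbb{C}}$-invariant, and consists precisely of those $G^{\mathbb{C}}$-orbits that do not meet $\Phi^{-1}(0)$; in the language of geometric invariant theory these are the unstable orbits.

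Next, I would stratify $M\setminus M_s$ using the norm squared $\|\Phi\|^2$ computed with an $\mathrm{Ad}$-invariant inner product on $\mathfrak{g}^*$. Its critical set decomposes into connected components: the zero-level component is $\Phi^{-1}(0)$, while the non-zero critical components $C_\beta$ are indexed by non-zero $\beta\in \mathfrak{g}^*$. For each such $\beta$, one defines the stratum $S_\beta$ as the set of points flowing to $C_\beta$ under the downward flow of the vector field dual to $\mathrm{d}\|\Phi\|^2$ with respect to $g$; the union of these strata covers $M\setminus M_s$.

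Exhibiting each $S_\beta$ as a complex submanifold is the next task. Unwinding the definitions, the generating vector field along $S_\beta$ is of the form $J\xi_M$, which is precisely the real vector field by which $\sqrt{-1}\,\xi \in \mathfrak{g}^{\mathbb{C}}$ acts holomorphically on $M$ via the $G^{\mathbb{C}}$-extension supplied by Theorem~\ref{thm:4.4}. Since the flow is therefore holomorphic, its stable manifold $S_\beta$ is a complex submanifold; the hypothesis \eqref{e-gue200120yydI}, equivalent to the non-degeneracy of $g$ on orbits through $\Phi^{-1}(0)$, guarantees that the flow near the zero level is non-degenerate and that the preceding construction makes sense.

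The main obstacle is the codimension bound $\operatorname{codim}_{\mathbb{C}} S_\beta > 1$ for each non-zero $\beta$. In the classical K\"ahler case this follows because the Hessian of $\|\Phi\|^2$ at a critical manifold commutes with $J$, so its negative eigenspaces are $J$-invariant and of even real dimension; a closer analysis of the weight decomposition of the $\beta$-action on the normal bundle upgrades the bound from one to two. Transporting this argument to the pseudo-K\"ahler setting rests on the compatibility $\omega(JX,JY)=\omega(X,Y)$, which still forces the Hessian of $\|\Phi\|^2$ to commute with $J$, combined with the non-degeneracy of $g$ along orbits; the indefinite character of $g$ is the real source of difficulty and must be handled carefully when counting signed eigenspaces. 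By compactness of $M$ only finitely many strata appear, so the finite union $\bigcup_{\beta\neq 0}\overline{S_\beta}$ provides the desired complex submanifold of complex codimension greater than one containing $M\setminus M_s$.
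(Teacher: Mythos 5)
Your stratification route is genuinely different from the paper's, which avoids Morse theory for $\lVert\Phi\rVert^2$ entirely: Theorem~\ref{thm:openanddense} is obtained by combining Theorem~\ref{thm:1} (a point where some $G$-invariant $L^k$-valued $(0,n_-)$-form is nonzero lies in $M_s$, following \cite[Theorems~5.3 and~5.4]{gs}) with Theorem~\ref{thm:2} (existence of a global nonzero such form, extracted from the on-diagonal Bergman kernel expansion of Theorem~\ref{t-gue220914yyd} along $\Phi^{-1}(0)$), so that $M\setminus M_s$ sits inside the common zero locus of the invariant forms and the codimension statement is inherited as in \cite[Theorem~5.7]{gs}. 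The reason the paper takes that detour is precisely the point at which your proposal breaks: with $g$ indefinite, $\lVert\Phi\rVert^2$ is not a Lyapunov function for the flow of $-\operatorname{grad}_g\lVert\Phi\rVert^2$. Indeed $\mathcal{L}_{\operatorname{grad}_g\lVert\Phi\rVert^2}\lVert\Phi\rVert^2=g\bigl(\operatorname{grad}_g\lVert\Phi\rVert^2,\operatorname{grad}_g\lVert\Phi\rVert^2\bigr)$ has no sign off the critical set, so flow lines need not converge to critical components, the sets $S_\beta$ need not be stable manifolds, and there is no reason the $S_\beta$ exhaust $M\setminus M_s$. The hypothesis \eqref{e-gue200120yydI} controls $g$ only along the $G$-orbits through $\mu^{-1}(0)$ and says nothing near the higher critical sets $C_\beta$, so invoking it does not repair the flow.

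Even granting a stratification, several steps you treat as definitions are theorems in the K\"ahler case and are not supplied here: that the open stratum equals $M_s=G^{\mathbb{C}}\cdot\Phi^{-1}(0)$ (Kirwan--Ness), and that each $S_\beta$ is a locally closed complex submanifold (the vector field $J\Phi(x)^{\sharp}_M$ is not generated by a single one-parameter subgroup of $G^{\mathbb{C}}$, since the Lie algebra element varies with $x$, so ``the flow is holomorphic'' is not immediate from Theorem~\ref{thm:4.4}). Most importantly, the codimension bound --- the entire content of the theorem --- is exactly the step you leave as ``must be handled carefully.'' The mechanism you sketch ($J$-invariance of the negative eigenspaces of the Hessian plus a weight count ``upgrading one to two'') cannot be a general fact: for $\mathbb{P}^1$ with the standard circle action and symmetric linearization, the unstable locus is the pair of poles, a $J$-invariant set of real codimension two but complex codimension one. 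So whatever forces complex codimension greater than one must come from the specific hypotheses of the theorem, and your argument never identifies where they enter.
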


 	This theorem is an analogue of \cite[Theorem 5.7]{gs} for pseudo-Riemannian manifolds. Note that here the existence theorem \cite[Theorem 5.6]{gs} is not true in general for the pseudo-Riemannian case: we have to replace the spaces of smooth sections of $L^k$ with space of $L^k$-valued $(0, n_-)$-forms, see Theorem~\ref{thm:2}. Then, Theorem~\ref{thm:openanddense} follows by combining Theorems \ref{thm:1} and \ref{thm:2} below.
 	
 	\begin{Theorem} \label{thm:1}
 		Let $s$ be a $G$-invariant $L^k$-valued $(0, n_-)$-forms on $M$. Then every $m\in M$ fulfilling $s(m)\neq 0$ lies in $M_s$.
 	\end{Theorem}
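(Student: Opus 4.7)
The plan is to follow the strategy of \cite[Theorem~5.7]{gs}, adapted to the pseudo-K\"ahler setting. The first step is to upgrade $G$-invariance to $G^{\mathbb{C}}$-invariance. By Theorem~\ref{thm:4.4} and the complexification of the action on $L$ carried out as in \cite[Theorem~5.1]{gs}, a $G$-invariant harmonic $L^k$-valued $(0,n_-)$-form $s$ is automatically $G^{\mathbb{C}}$-invariant: the infinitesimal action~\eqref{eq:5.3} preserves $\operatorname{Ker}\square^{(n_-)}_k$, so $G$-invariance combined with $\square^{(n_-)}_k s=0$ rigidifies $s$ under the complexified flow.

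With this in hand, fix $m\in M$ such that $s(m)\neq 0$ and consider the $G^{\mathbb{C}}$-orbit through $m$. Define
\[
\varphi\colon\ \mathfrak{g}\To \mathbb{R},\qquad \varphi(\xi) := \log \langle s, s\rangle_{h^{L^{\otimes k}}}\bigl(\exp(\sqrt{-1}\,\xi)\cdot m\bigr),
\]
which is well defined since $s$ is $G^{\mathbb{C}}$-invariant and does not vanish on the orbit. Combining $G^{\mathbb{C}}$-invariance with \eqref{eq:5.3} gives
\[
\mathrm{d}\varphi_\xi(\eta) = -4\pi k \bigl\langle \Phi\bigl(\exp(\sqrt{-1}\,\xi)\cdot m\bigr),\eta\bigr\rangle,\qquad \eta\in \mathfrak{g}.
\]
Thus every critical point $\xi_0$ of $\varphi$ produces a point $\exp(\sqrt{-1}\,\xi_0)\cdot m\in \Phi^{-1}(0)$, which immediately yields $m\in M_s$.

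The hard part is showing that $\varphi$ attains a critical point on $\mathfrak{g}$. In the K\"ahler case this is the classical Kempf--Ness argument: $\varphi$ is strictly convex along affine lines and proper, so it has a unique minimum. In the pseudo-K\"ahler setting the ambient metric is indefinite and convexity of $\varphi$ on all of $\mathfrak{g}$ is not automatic. The key input is the hypothesis that $g$ restricted to the $G$-orbit through each point of $\Phi^{-1}(0)$ is non-degenerate; since $G$ is compact and acts by isometries, this restriction is in fact definite and, after fixing the appropriate sign convention, yields convexity of $\varphi$ along lines in $\mathfrak{g}$. Coercivity of $-\varphi$ then follows from the compactness of $M$ together with the observation that $\varphi\to -\infty$ along the orbit would force the orbit to approach a zero of $s$, contradicting $G^{\mathbb{C}}$-invariance. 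These two properties force the existence of a minimum $\xi_0\in \mathfrak{g}$ of $\varphi$, and the theorem follows. The technical heart of the argument lies in making the convexity and coercivity statements rigorous in the indefinite-metric setting.
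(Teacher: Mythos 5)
Your overall route is the same as the paper's: the paper proves this statement by simply citing the proofs of Guillemin--Sternberg's Theorems~5.3 and~5.4 and asserting they go through with holomorphic sections replaced by harmonic $L^k$-valued $(0,n_-)$-forms, and your Kempf--Ness function $\varphi$ is exactly the mechanism behind those proofs. However, your write-up has a genuine gap at precisely the point where the pseudo-K\"ahler setting differs from the K\"ahler one, and you partly paper over it with an incorrect assertion. The claim that ``since $G$ is compact and acts by isometries, this restriction is in fact definite'' is false in general: a $G$-invariant non-degenerate metric on an orbit of a compact group need not be definite (already a torus acting on itself admits invariant Lorentzian metrics), and the standing hypothesis of the paper is only non-degeneracy of $g$ on the orbits through $\Phi^{-1}(0)$, exactly as in condition \eqref{e-gue200120yydI}. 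Moreover, even granting definiteness there, convexity of $\varphi$ along lines requires controlling the sign of $g(\xi_M,\xi_M)=\tfrac{{\rm d}}{{\rm d}t}\Phi^{\xi}(\gamma(t))$ along the \emph{entire} curve $\gamma(t)=\exp\bigl(t\sqrt{-1}\,\xi\bigr)\cdot m$, which leaves $\Phi^{-1}(0)$; the hypothesis says nothing at those points. Since you explicitly defer ``the technical heart'' (convexity and coercivity in the indefinite setting) rather than prove it, the argument is incomplete exactly where it would need to be new.

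Two smaller points. First, your coercivity step is not a contradiction as stated: $\varphi\to-\infty$ means $\langle s,s\rangle\to 0$ along a sequence in the orbit whose limit points lie only in the orbit \emph{closure}, and $s$ may vanish there without contradicting $G^{\mathbb{C}}$-invariance of $s$ on the orbit itself; Guillemin--Sternberg instead exploit concavity of $\log\langle s,s\rangle$ along the curves $\gamma$ (which again needs the sign of $g(\xi_M,\xi_M)$) together with compactness of $M$. Second, the upgrade from $G$- to $G^{\mathbb{C}}$-invariance is immediate for holomorphic sections but not for harmonic $(0,n_-)$-forms, since harmonicity depends on the metric, which is only $G$-invariant; one should argue through the Dolbeault cohomology $H^{n_-}\bigl(M,L^{\otimes k}\bigr)$, on which $G^{\mathbb{C}}$ acts holomorphically. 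To be fair, the paper's own proof is a bare citation and does not supply these details either, but a self-contained proof must address them.
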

 	\begin{proof}
 		See the proofs of \cite[Theorems 5.3 and 5.4]{gs}, they hold true in our setting up to replacing the space of $G$-invariant holomorphic sections of $L^k$ with the space of $G$-invariant $L^k$-valued $(0, n_-)$-forms.
 	\end{proof}

 	Before stating Theorem~\ref{thm:2}, we recall the following result from \cite{hsiaohuang}. Although it can be deduced from the theory of Toeplitz operators discussed in the previous sections, here we present Theorem~\ref{t-gue220914yyd} in terms of the Bergman kernel, rather than the Szeg\H{o} kernel. As we explained in Section~\ref{sec:for}, we identify the $G$-invariant Bergman projector \smash{$P^{(q),G}_k$} of $L^k$ with the $k$-th Fourier component of the $G$-invariant Szeg\H{o} kernel \smash{$S^{(q)}_{G,k}$}.
 	
 	Let us premise a further piece of notation. We will identify the curvature form $R^L$ with the Hermitian matrix
 	\[\dot{R}^L\in\mathcal{C}^{\infty}\bigl(M,\mathrm{End}\bigl(T^{1,0}M\bigr)\bigr) , \qquad \big\langle R^L(z) , U\wedge\overline{V} \big\rangle =\big\langle \dot{R}^L(z)U | V \big\rangle,\]
 	for every $U$ and $V$ in $T^{1,0}_zM$, $z\in M$. We denote by $W$ the subbundle of rank $q=n_-$ of $T^{1,0}M$ generated by the eigenvectors corresponding to negative eigenvalues of $\dot{R}^L$. Then, $\det \ol{W}^*:=\Lambda^q\ol{W}^*$ is a rank one subbundle, where $\ol{W}^*$ is the dual bundle of the complex conjugate bundle of $W$ and $\Lambda^q\ol{W}^*$ is the vector space of all finite sums $v_1\wedge\cdots\wedge v_q$, $v_1,\dots,v_q\in\ol{W}^*$. We denote by
 	$P_{\det \ol{W}^*}$ the orthogonal projection from $T^{*0,q}M$ onto $\det \ol{W}^*$.
 	
 	Let $s$ and $s_1$ be local holomorphic trivializing sections of $L$ defined on open sets $D\subset M$ and~${D_1\subset M}$, respectively, $\abs{s}^2_{h^L}={\rm e}^{-2\phi}$, $\abs{s_1}^2_{h^L}={\rm e}^{-2\phi_1}$, $\phi\in\mathcal{C}^\infty(D,\mathbb R)$, $\phi_1\in\mathcal{C}^\infty(D_1,\mathbb R)$ which can be assumed to be $G$-invariant. The localization of \smash{$P^{(q),G}_k$} with respect to $s$, $s_1$ is given by
 	\[
 		P^{(q),G}_{k,s,s_1}\colon\ \Omega^{0,q}_c(D)\To\Omega^{0,q}(D_1),\qquad
 		u\To s^{-k}_1{\rm e}^{-k\phi_1}\bigl(P^{(q),G}_k\bigl(s^k{\rm e}^{k\phi}u\bigr)\bigr).
\]
 	Let \[P^{(q),G}_{k,s,s_1}(x,y)\in\mathcal{C}^\infty\bigl(D_1\times D,T^{*0,q}M\boxtimes\bigl(T^{*0,q}M\bigr)^*\bigr)\] be the distribution kernel of \smash{$P^{(q),G}_{k,s,s_1}$}. When $D=D_1$, $s=s_1$, we write \smash{$P^{(q),G}_{k,s}:=P^{(q),G}_{k,s,s}$},
 	\smash{$P^{(q),G}_{k,s}(x,y):=P^{(q),G}_{k,s,s}(x,y)$}.
 	
 	\begin{Theorem}\label{t-gue220914yyd}
 		With the notations and assumptions used above and recall that we let $q=n_-$. Let $s$ be a local holomorphic trivializing section of $L$ defined on an open set $D\subset M$, $\abs{s}^2_{h^L}={\rm e}^{-2\phi}$.
 		
 		Let $D$ be an open set of $X$ with $D\cap \Phi^{-1}(0) =\varnothing$. Then, as $k\rightarrow +\infty$, \smash{$P^{(q),G}_{k,s}=O(k^{-\infty})$} on~$D$.
 		
 		Let $m \in \Phi^{-1}(0)$ and let $U$ be an open set of $m$. Then, as $k\rightarrow +\infty$, in local coordinates defined in $U$ $($as introduced in {\rm\cite{hsiaohuang})},
 		\[
 			P^{(q),G}_{k,s}(x,y)={\rm e}^{{\rm i}k\Psi(x,y)}b(x,y,k)+O(k^{-\infty})\qquad\text{on} \ D\times D,
 		\]
 		where \[b\in S^{n-d/2}_{{\rm cl }}\bigl(1;D\times D,T^{*0,q}M\boxtimes\bigl(T^{*0,q}M\bigr)^*\bigr) ,\qquad b(x,y,k)\sim\sum_{j=0}b_j(x,y)k^{n-j}\] in $S^{n-d/2}_{{\rm cl }}\bigl(1;D\times D,T^{*0,q}M\boxtimes\bigl(T^{*0,q}M\bigr)^*\bigr) $,
 		and \[b_j\in\mathcal{C}^\infty\bigl(D\times D,T^{*0,q}M\boxtimes\bigl(T^{*0,q}M\bigr)^*\bigr) ,\qquad b_0=\mathbf{b_0}(m) P_{\det \overline{W}^*}\]
 with $\mathbf{b_0}(m)> 0$ and $j=0,1,\dots$,
 		and we refer to {\rm\cite[\emph{Theorem} 1.8]{hsiaohuang}} for the properties of the phase function $\Psi\in\mathcal{C}^\infty(D\times D)$.
 	\end{Theorem}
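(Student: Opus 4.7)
The plan is to reduce Theorem \ref{t-gue220914yyd} to the corresponding statement on the circle bundle $X$, which is already available from Theorem \ref{thm:kfourierszego} (and, in the form needed here, from \cite{hsiaohuang} and \cite{gh}), using the identification between the $k$-th Fourier component of the $G$-invariant Szeg\H{o} kernel and the $G$-invariant Bergman projector established in Section~\ref{sec:for}. Explicitly, I would use
\[
S^{(n_-)}_{G,k} \;=\; \mathfrak{S}_k^{-1}\circ P^{(n_-),G}_k \circ \mathfrak{S}_k \circ F_k,
\]
so that a pointwise statement for $S^{(n_-)}_{G,k}(x_m, x'_{m'})$ along the fibres $S_m, S_{m'}$ of $\pi\colon X\to M$ is equivalent, after dividing by the trivialising sections $s^k$, $s_1^k$, to a pointwise statement for $P^{(n_-),G}_{k,s,s_1}(m,m')$.

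First I would treat the off-diagonal decay. If $D\subset M$ satisfies $D\cap \Phi^{-1}(0)=\varnothing$, then $\pi^{-1}(D)\cap \mu^{-1}(0)=\varnothing$ because $\mu=\Phi\circ\pi$. By Theorem~\ref{thm:kfourierszego}, $S^{(n_-)}_G\equiv 0$ microlocally off $\mu^{-1}(0)$ and, taking the $k$-th Fourier component along the $S^1$-orbits (which commutes with the $S^1$-invariant microlocal decomposition), one gets $S^{(n_-)}_{G,k}=O(k^{-\infty})$ on $\pi^{-1}(D)\times \pi^{-1}(D)$. Using the identification above and the trivialisation $|s|^2_{h^L}=e^{-2\phi}$, this translates into $P^{(n_-),G}_{k,s}=O(k^{-\infty})$ on $D$.

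Second, for $m\in \Phi^{-1}(0)$, pick $p\in \mu^{-1}(0)$ with $\pi(p)=m$ and a local neighbourhood $U$ of $p$ in $X$ on which Theorem~\ref{thm:kfourierszego} supplies the microlocal description
\[
S^{(n_-)}_{G,k}(x,y) \;=\; e^{ik\Psi(x,y)}\, b(x,y,k) + O(k^{-\infty}),
\]
with $b\in S^{n-d/2}_{\rm cl}$. I would then restrict to a section of the circle bundle over a neighbourhood of $m$, absorbing the fibre coordinates and the weight $e^{-k(\phi(x)+\phi_1(y))}$ into the phase and amplitude to obtain
\[
P^{(n_-),G}_{k,s}(m,m') \;=\; e^{ik\Psi(m,m')}\, b(m,m',k) + O(k^{-\infty}),
\]
where (abusing notation) $\Psi$ denotes the induced phase on $M$; its properties are those recalled from \cite[Theorem~1.8]{hsiaohuang}. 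The asymptotic expansion $b\sim\sum_j b_j k^{n-j}$ is inherited directly from the symbol expansion of the Szeg\H{o} kernel.

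The main task left is to identify the leading term. From the explicit formula for $b_0$ in Theorem~\ref{thm:kfourierszego}, evaluated on the diagonal and then transported to $M$ via $\mathfrak{S}_k$, the factor $\tau_{x,n_-}$ on $X$ becomes, after removing the $R$-direction and the $G$-orbit direction, precisely the projector $P_{\det\overline{W}^*}$ onto the one-dimensional subspace of $T^{*0,n_-}M$ spanned by the conjugate of the negative eigendirections of $\dot R^L$ (this is where the identification of the Levi form on $X$ with the curvature form $R^L$ on $M$, using ${\rm d}\omega_0=2\pi^*\omega$, enters). The remaining scalar factor, combining $|\det R_x|^{-1/2}|\det L_x|$ and the constants coming from $F_k$ and the trivialisations, gives $\mathbf b_0(m)$, and positivity is automatic from the absolute values in the formula together with $V_{\rm eff}(m)>0$. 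The step I expect to require the most care is bookkeeping the constants and checking that the tensor $\tau_{x,n_-}$ descends to $P_{\det\overline{W}^*}$ on $M$; everything else is a clean translation via $\mathfrak{S}_k\circ F_k$ of results already proved in \cite{gh,hsiaohuang}.
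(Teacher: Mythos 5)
Your proposal is correct and follows exactly the route the paper itself indicates: the paper does not prove Theorem~\ref{t-gue220914yyd} but recalls it from \cite[Theorem~1.8]{hsiaohuang}, remarking that it ``can be deduced from the theory of Toeplitz operators discussed in the previous sections'' via the identification \smash{$S_{k}^{(n_-)}=\mathfrak{S}^{-1}_k\circ P_{k}^{(n_-)} \circ \mathfrak{S}_k\circ F_k$} between the $G$-invariant Bergman projector and the $k$-th Fourier component of the $G$-invariant Szeg\H{o} kernel. Your sketch simply fleshes out that remark (off-diagonal decay from $\mu=\Phi\circ\pi$, transport of the phase and symbol expansion through the trivialization, and identification of $\tau_{x,n_-}$ with $P_{\det\overline{W}^*}$ using ${\rm d}\omega_0=2\pi^*\omega$), so it is essentially the same approach.
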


 	\begin{Theorem}\label{thm:2}
 		If the set $\Phi^{-1}(0)$ is non-empty and zero is a regular value of $\Phi$, then for some~$k$, there exists a global nonzero holomorphic $G$-invariant $L^k$-valued $(0, n_-)$-forms.
 	\end{Theorem}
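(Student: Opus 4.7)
The key tool I would use is Theorem~\ref{t-gue220914yyd} applied at a chosen point $m \in \Phi^{-1}(0)$ (which is non-empty by hypothesis). My plan is to argue that the on-diagonal leading term of the $G$-invariant Bergman kernel at $m$ is strictly positive for $k$ large, and then read off from the reproducing-kernel identity that some element of $\mathcal{Q}_k(M)^G$ fails to vanish at $m$.

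First, I would fix $m \in \Phi^{-1}(0)$ and a local $G$-invariant holomorphic trivialization $s$ of $L$ on a neighborhood $D \ni m$. By Theorem~\ref{t-gue220914yyd},
\[
P^{(q),G}_{k,s}(m,m) = b(m,m,k) + O(k^{-\infty}),\qquad b(m,m,k) \sim \sum_{j=0}^{\infty} b_j(m,m)\, k^{n-j},
\]
with $b_0(m,m) = \mathbf{b_0}(m)\, P_{\det\overline W^*}$ and $\mathbf{b_0}(m)>0$. In particular, there exists $k_0$ such that for all $k \geq k_0$ the endomorphism $P^{(q),G}_{k,s}(m,m)$ of $T^{*0,n_-}_m M$ is nonzero (indeed, applied to a suitable element of $\det \overline W^*_m$ it is nonzero).

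Next I would translate this into the existence of a nonzero section. Since $P^{(q),G}_k$ is the orthogonal projection of $L^2_{0,n_-}(M,L^{\otimes k})$ onto the finite-dimensional Hilbert space $\mathcal{Q}_k(M)^G$, choosing an orthonormal basis $\{\sigma_1,\dots,\sigma_{d_k}\}$ of $\mathcal{Q}_k(M)^G$ gives the standard reproducing formula
\[
P^{(q),G}_k(x,y) = \sum_{j=1}^{d_k} \sigma_j(x) \otimes \sigma_j(y)^{*},
\]
and in the trivialization by $s$ this becomes
\[
P^{(q),G}_{k,s}(x,y) = \sum_{j=1}^{d_k} \widetilde\sigma_j(x) \otimes \widetilde\sigma_j(y)^{*},
\]
where $\widetilde\sigma_j$ is the local expression of $\sigma_j$ with respect to $s^{\otimes k}$. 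Evaluating at $y=x=m$, if every $\sigma_j(m)$ vanished then $P^{(q),G}_{k,s}(m,m)$ would be zero, contradicting the previous step for $k \geq k_0$. Hence at least one $\sigma_j$ is a global nonzero $G$-invariant $L^{\otimes k}$-valued harmonic $(0,n_-)$-form, which is precisely the conclusion.

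The step I expect to be subtlest is the first one: Theorem~\ref{t-gue220914yyd} is stated as an asymptotic expansion under the Fourier-component/Szeg\H{o}-kernel identification, so I would need to carefully check that the leading coefficient $b_0$ really is nonzero as an endomorphism when restricted to the diagonal at $m \in \Phi^{-1}(0)$; this is already built into the statement via $P_{\det\overline W^*}$ being a nonzero projection and $\mathbf{b_0}(m)>0$. Everything else is a clean application of the Hilbert-space structure of $\mathcal{Q}_k(M)^G$ and the fact that the Bergman projector's kernel admits the finite-dimensional reproducing-kernel expansion.
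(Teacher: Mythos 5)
Your proposal is correct and takes essentially the same route as the paper: both evaluate the on-diagonal asymptotic expansion of Theorem~\ref{t-gue220914yyd} at a point $m\in\Phi^{-1}(0)$, use $\mathbf{b_0}(m)>0$ together with the fact that $P_{\det\overline{W}^*}$ is a nonzero projection to conclude that $P^{(n_-),G}_k(m,m)\neq 0$ for $k$ large, and then read off from the orthonormal-basis (reproducing-kernel) expansion of the Bergman kernel that some element of $\mathcal{Q}_k(M)^G$ does not vanish at $m$. Your write-up in fact makes the reproducing-kernel identity more explicit than the paper does.
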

	\begin{proof}
		Let $S_0, \dots, S_{d_k}$ be an orthonormal basis of the vector space \smash{$H^{q}\bigl(M,L^{\otimes k}\bigr)^G$}. In the standard situation, when the line bundle is positive, we consider $q=0$. Let $m \in \Phi^{-1}(0)$; since all the components transform by the same scalar under a change of frame and since
			\[P^{(0),G}_k(m, m) = \sum_{j=0}^{d_k} |S_j(m)|^2_{h^{L^{\otimes k}}}=O\bigl(k^{n-d/2}\bigr) , \]
		it is easy to see that the sections $S_j$ do not share common zeros.
		
		In the pseudo-Riemannian setting, we assume $q=n_-$. Let $m \in \Phi^{-1}(0)$, in view of Theorem~\ref{t-gue220914yyd}, we have
		\begin{equation} \label{eq:expan}
			P_k^{(q)}(m,m)= \mathbf{b_0}(m) P_{\det \overline{W}^*} k^{n-d/2} + O\bigl(k^{n-d/2-1}\bigr) ,
		\end{equation}
		with $\mathbf{b_0}(m)> 0$. By \eqref{eq:expan}, it follows that the sections $S_j$ do not share common zeros. Since~$M$ is compact, there exists $k_0$ such that \smash{$P^{(n_-),G}_k(m, m)$} for all $k\geq k_0$ and $m\in M$. Thus, by equation~\eqref{eq:expan}, there do not exist $m\in M$ such that for each $s$ in $\mathcal{Q}_k(M)^G$, $s(m)=0$ for all~${k>0}$. In particular, since $\Phi^{-1}(0)$ is non-empty, then for some $k$, there exists a global non-zero holomorphic $G$-invariant $L^k$-valued $(0, n_-)$-forms.
	\end{proof}

	 ``Quantization commutes with reduction'' follows as in \cite{gs} by combining \cite[Theorem 5.7]{gs} (the analogous result here is Theorem~\ref{thm:openanddense}) and the following analogue of \cite[Theorem 5.8]{gs}.
	
	\begin{Theorem} \label{thm:final}
		Let $s$ be a vector in $\mathcal{Q}_k(M)^G$. Then $\langle s, s \rangle$ is bounded and takes its maximum value on $\Phi^{-1}(0)$.
	\end{Theorem}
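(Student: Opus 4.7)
Boundedness is automatic: $M$ is compact and $\langle s,s\rangle$ is a continuous non-negative function on $M$, so it attains its maximum at some point $m_{0}\in M$. If $s\equiv 0$ there is nothing to prove, so assume $s\not\equiv 0$. Then $\langle s,s\rangle(m_{0})>0$, and in particular $s(m_{0})\neq 0$. By Theorem~\ref{thm:1}, the nonvanishing of the $G$-invariant form $s$ at $m_{0}$ forces $m_{0}\in M_{s}$, the stable locus for the $G^{\mathbb C}$-action. Thus the whole argument may be carried out inside $M_{s}$, where the $G$-action has been extended to $G^{\mathbb C}$ by Theorem~\ref{thm:4.4} and lifts canonically to $L$ as recalled at the start of Section~\ref{sec:proofof1.2}.

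Next, I would upgrade the $G$-invariance of $s$ to genuine $G^{\mathbb C}$-invariance on $M_{s}$. For any $\xi\in\mathfrak{g}$ set $\eta=\sqrt{-1}\xi$, so that $\eta_{M}=J\xi_{M}$. Since $s$ is harmonic, in particular $\ddbar$-closed, the infinitesimal action of $\eta$ on $s$ can be computed through the $(1,0)/(0,1)$ decomposition of $\nabla$ exactly as for holomorphic sections in the proof of \cite[Theorem~5.2]{gs}; together with the $G$-invariance condition $\nabla_{\xi_{M}}s=2\pi{\rm i}\langle\Phi,\xi\rangle s$ provided by the Kostant formula, this forces the Lie derivative of $s$ along $\eta_{M}$ (as a section of $L^{\otimes k}$ with its natural $G^{\mathbb C}$-structure) to vanish, that is, $G^{\mathbb C}$-invariance. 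Plugging this into \eqref{eq:5.3} and using that the Hermitian pairing on $L^{\otimes k}$ is preserved by parallel transport yields, on a $G^{\mathbb C}$-invariant neighborhood of~$m_{0}$, the identity
\[
\eta_{M}\langle s,s\rangle=-4\pi\langle\Phi(\cdot),\xi\rangle\langle s,s\rangle,
\]
which is precisely the formula already recorded in the excerpt just before Theorem~\ref{thm:5.2}.

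Since $m_{0}$ is a global maximum of $\langle s,s\rangle$, every directional derivative vanishes there. Evaluating the previous identity at $m_{0}$ gives
\[
0=\eta_{M}\langle s,s\rangle(m_{0})=-4\pi\langle\Phi(m_{0}),\xi\rangle\langle s,s\rangle(m_{0}),
\]
and dividing by $\langle s,s\rangle(m_{0})>0$ forces $\langle\Phi(m_{0}),\xi\rangle=0$ for every $\xi\in\mathfrak{g}$. Hence $\Phi(m_{0})=0$, i.e., $m_{0}\in\Phi^{-1}(0)$, which is the claim. The step I expect to require the most care is the passage from $G$-invariance to $G^{\mathbb C}$-invariance on $M_{s}$: in the Kähler/holomorphic-section setting this is a one-line type-decomposition, and in the present indefinite setting one must verify that the same argument still applies to harmonic $(0,n_{-})$-forms, so that the Kostant relation propagates from the real directions $\xi_{M}$ to the imaginary directions $J\xi_{M}$ without invoking positivity of the metric. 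Once this is in place, the variational argument is entirely parallel to \cite[Theorem~5.8]{gs}.
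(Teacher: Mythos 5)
Your proof is correct, and it reaches the conclusion by a genuinely different (and somewhat shorter) route than the paper. Both arguments rest on the same two ingredients: the upgrade of $G$-invariance of $s$ to $G^{\mathbb{C}}$-invariance (which the paper also uses without comment when it applies the identity $\eta_M\langle s,s\rangle=-4\pi\langle\Phi(\cdot),\xi\rangle\langle s,s\rangle$, derived only for $G^{\mathbb{C}}$-invariant forms, to an arbitrary element of $\mathcal{Q}_k(M)^G$), and that differential identity itself. Where you differ is in how the identity is exploited. The paper writes an arbitrary stable point as $m={\rm e}^{\eta}m_0$ with $m_0\in\Phi^{-1}(0)$ via the Cartan decomposition $G^{\mathbb{C}}=PG$, restricts the identity to the curve $\gamma(t)={\rm e}^{t\eta}m_0$, and uses that $\eta_M$ is the gradient vector field of $\Phi^{\xi}$, so that $\Phi^{\xi}$ is strictly increasing along $\gamma$ and vanishes at $t=0$; hence $\langle s,s\rangle$ has its unique maximum on each such orbit at $t=0$, and density of $M_s$ finishes the argument. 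You instead take a global maximizer $m_0$ supplied by compactness, observe that ${\rm d}\langle s,s\rangle(m_0)=0$, and read off $\langle\Phi(m_0),\xi\rangle=0$ for all $\xi$ directly from the identity, since $\langle s,s\rangle(m_0)>0$. Your version dispenses with the Cartan decomposition, the gradient-flow monotonicity, and the density of $M_s$ (indeed your detour through Theorem~\ref{thm:1} is not strictly needed, since the $G^{\mathbb{C}}$-action on $L$ and hence formula~\eqref{eq:5.3} are defined on all of $M$); the paper's version buys the stronger pointwise statement $\langle s,s\rangle\bigl({\rm e}^{\eta}m_0\bigr)\le\langle s,s\rangle(m_0)$ with strict decrease away from $t=0$ along each $\sqrt{-1}\,\mathfrak{g}$-orbit. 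You correctly identify the one delicate point, common to both arguments, namely the passage from $G$- to $G^{\mathbb{C}}$-invariance for harmonic $(0,n_-)$-forms rather than holomorphic sections; the paper does not address this either, so it is not a gap specific to your proposal.
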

	\begin{proof}
		Let $m$ be a point of $M_s$. Then $m=g m_0$ with $m_0\in \Phi^{-1}(0)$ and $g\in G^{\mathbb{C}}$. By the Cartan decomposition of $G^{\mathbb{C}}$, we have $G^{\mathbb{C}}=P G$, where $G$ is a maximal compact subgroup of $G^{\mathbb{C}}$ and the Lie algebra of $P$ is $\sqrt{-1} \mathfrak{g}$. Thus we write $g={\rm e}^{\eta} k$ with $\eta =\sqrt{-1} \xi\in \sqrt{-1} \mathfrak{g}$ and $k\in G$. Replacing $m_0$ with $k m_0$, we can assume that $m={\rm e}^{\eta} m_0$.
		
		Consider the curve $\gamma \colon (-\infty, +\infty)\rightarrow M$, $ t\mapsto \gamma(t)={\rm e}^{t \eta} m_0$. Hence, by the formula
$\eta_M\langle s, s\rangle=-4\pi \langle\Phi(\cdot), \xi\rangle \langle s, s \rangle$
		we have, along $\gamma(t)$,
$\frac{{\rm d}}{{\rm d}t}\langle s, s\rangle=-4\pi \langle\Phi(\cdot), \xi\rangle \langle s, s \rangle$.
		As in \cite[Lemma 4.7]{gs}, we note that $\eta_M$ is the gradient vector field associated with the function~${\Phi^{\xi}:=\langle\Phi(\cdot), \xi\rangle}$. Thus $\Phi^{\xi}$ is strictly increasing along $\gamma(t)$, so it is positive
		for $t>0$ and negative for $t<0$. Therefore, $\langle s, s \rangle$ has a unique maximum mum at $t=0$.
	\end{proof}
 	
 	Finally, we note that Theorem~\ref{thm:final} implies the surjectivity of $\mathfrak{r}$. Indeed,
 	if $m$ is a point of~$M$, then we can find a neighborhood~$U$ of $m$ in $M$ and a non-zero holomorphic $G$-invariant $L^k$-valued $(0, n_-)$-forms, $s_0 \colon U\rightarrow L^k\otimes T^{*0,n_-}M $. Then $s =f s_0$ on $U\cap M_s$, $f$ being a bounded holomorphic function. Since $(M\setminus M_s)\cap U$ is of complex codimension greater than one in $U$, the singularity of $f$ at $m$ is removable. Thus $s$ extends to a holomorphic section of $L^k\otimes T^{*0,n_-}M $ over all of $M$.
 	 	
\section[Proof of Theorem \ref{thm:quant}]{Proof of Theorem~\ref{thm:quant}}
	\label{sec:proofoftheorem}
	
	Let $\widetilde f\in C^{\infty}(X)$ be a $G$ and $S^1$ smooth invariant function on $X$. First, we prove that
	\begin{equation} \label{eq:ineq}
		\big\lVert I_{k}^G[f] \big\rVert_{\infty} \leq \big\rVert T^{G}_k\big[\widetilde{f}\big]\big\lVert \leq \lVert f\rVert_{\infty} .
	\end{equation}

	Recall that given two $G$ invariant $(0, n_-)$ forms $s$ and $t$ in $H(X)^G_k$, we denote their inner product by $(s\vert t)$. Now, using the Cauchy--Schwartz inequality, we obtain that for each element~${x\in \mu^{-1}(0)}$
	\begin{align*}
	\big\lVert I^{G}_k[f](x) \big\rVert^2_{\infty}&= \frac{\big\vert\bigl(S^{G}_k(\cdot, x) v\big\vert T^{G}_{k}[f] S^{G}_k(\cdot, x) v\bigr)\big\vert^2}{\bigl(S^{G}_k(\cdot, x) v\big\vert S^{G}_k(\cdot, x) v\bigr)^2} \\
	&\leq \frac{\bigl(T^{G}_{k}[f] S^{G}_k(\cdot, x) v\big\vert T^{G}_{k}[f] S^{G}_k(\cdot, x) v\bigr)}{\bigl(S^{G}_k(\cdot, x) v\big\vert S^{G}_k(\cdot, x) v\bigr)}
	\leq \rVert T^{G}_k[f]\lVert^2,
\end{align*}
 	where the last inequality follows by the definition of the operator norm, see \eqref{eq:opn}. Taking the $\sup$ both sides over $\mu^{-1}(0)$, it shows the first inequality in \eqref{eq:ineq}.
 	
 	For the second inequality, we note that for $s \in H^{G}_k(X)$,
 	\begin{align*}
 \frac{\lVert T^{G}_k[f] s \rVert^2}{\lVert s \rVert^2}&=\frac{\bigl(T^{G}_{k}[f] s \big\vert T^{G}_{k}[f] s\bigr)}{\int_X \langle s, s\rangle \mathrm{dV}_X} = \frac{\lVert f\rVert^2_{\infty} \bigl(S^{G}_{k}[f] s \big\vert S^{G}_{k}[f] s\bigr)}{\int_{X} \langle s, s\rangle \mathrm{dV}_{X}}
 		\leq \lVert f\rVert^2_{\infty} .
 	\end{align*}
 	Hence, it proves the second inequality \eqref{eq:ineq}.
 	
 	Now, we are ready to prove the theorem. Since $X$ is compact, then $\mu^{-1}(0)$ is closed and hence compact. Choose $x_e\in \mu^{-1}(0)$ a point with $\big\lvert \widetilde{f}(x_e)\big\rvert=\lVert f \rVert_{\infty}$. Since the Berezin transform has as a leading term the identity, we have
\smash{$\big\lvert \bigl(I^{G}_k[f]\bigr)(x_e) -\widetilde{f}(x_e) \big\rvert \leq \frac{C}{k} $}
 	for a suitable $C>0$. Thus,
 	\[ \lVert f \rVert_{\infty}-\frac{C}{k}= \big\lvert \widetilde{f}(x_e)\big\rvert -\frac{C}{k}\leq \big\lvert \bigl(I^{G}_k[f]\bigr)(x_e)\big\rvert \leq \big\lVert I^{G}_k[f]\big\rVert_{\infty} .\]
 	Putting \eqref{eq:ineq} in this last inequality, we get part (a) of Theorem~\ref{thm:quant}.
 	
 	Parts (b) and~(c) of Theorem~\ref{thm:quant} are consequence of Theorem~\ref{thm:compositionFouriercomponents}.
 	
 	\section[Proof of Theorem \ref{cor}]{Proof of Theorem~\ref{cor}}
 	\label{sec:proofcor}
 	
 	It is an easy consequence of \cite{gh} that the star product defined by Toeplitz operators is of \textit{Wick type}.
 	
 	Let $f$ and $g$ be two $G$ and $S^1$ invariant smooth function on $X_G$. For ease of notation, we write $T^{G}_k[f]$ for $T^{G}_k\big[\widetilde{f}\big]$. It is clear by Theorem~\ref{thm:quant} and \cite{gh} that $\star$ is a well-defined associative star product. In fact, by part (b) of Theorem~\ref{thm:quant}, we get
 	\[\big\lVert T^{G}_k[{\{f, g\}-i(C_1(f,g)-C_1(g,f))}] \big\rVert=O\left(\frac{1}{k}\right), \]
 	and taking the limit for $k\rightarrow+\infty$ and using part (a) we get
$\lVert \{f, g\}-{\rm i}(C_1(f,g)-C_1(g,f))\rVert_{\infty}=0 $
 	and thus we get part (b). Similarly, by Theorem~\ref{thm:quant}\,(c) and~(a), we get that $C_0(g,h)=g\cdot h$. In a similar way the associativity property is proved. 	
 	
 	Now, we prove the uniqueness. Let $C_j$ and $\tilde{C}_j$ be two systems of bi-differential operators inducing star products and both fulfilling the asymptotic condition \eqref{eq:cond}. We show that $C_j = \tilde{C}_j$, for all $j\in \mathbb{N}_0$, hence the induced star products coincide. Note $C_0=\tilde{C}_0$ by hypothesis. By induction, assume that $C_j=\tilde{C}_j$ for each $j\leq N-2$. By the results in \cite{gh}, we have
 	\[T_k^G[f]\circ T_k^G[g]\sim\sum_{j=0}^{N-1}k^{-j}T_k^G[C_j(f, g)] +O\bigl(k^{-N}\bigr)\]
 	and
 	\[T_k^G[f]\circ T_k^G[g]\sim\sum_{j=0}^{N-1}k^{-j}T_k^G\big[\tilde{C}_j(f, g)\big]+O\bigl(k^{-N}\bigr) , \]
 	where $\sim$ stand for ``as the same asymptotic as''. For every $f, g\in C^{\infty}(X_G)^{S^1}$, by the inductive hypothesis and subtracting these two expression, we obtain
 	\[\left\lVert \frac{1}{k^{N-1}} T^G_k\big[{C}_{N-1}(f, g)-\tilde{C}_{N-1}(f, g)\big]\right\rVert\leq \frac{K}{k^{N}} . \]
 	Eventually, taking the limit on both sides as $k$ goes to infinity and applying Theorem~\ref{thm:quant}\,(a), we obtain
 	$C_{N-1}(f, g)=\tilde{C}_{N-1}(f, g) $.
 	
 	Now we recall that
 	\[A_N=\hat{R}^lT^G[f]T^G[g]-\sum_{j=0}^{N-1} \hat{R}^{N-j}T^G[C_j(f, g)] \]
 	which is an operator of Szeg\H{o} type of order zero by Theorem~\ref{t-gue210706ycdg} and it is invariant under the circle action. Thus, its symbol is a $G$ and $S^1$ invariant function on $X_G$ which, by restriction on~$\mu^{-1}(0)$, defines a $S^1$-invariant function on $X_G$. By definition, it is the next element $C_N(f, g)$ in the star-product.
 	
 	The unit of the algebra \smash{$C^{\infty}(X_G)^{S^1}$}, the constant function $1$, will also be the unit in the star-product. In fact, it is equivalent to
 	\begin{equation} \label{eq:cj}
 		C_j(1, f)=C_j(f, 1)=0
 		\end{equation}
 	for $j\geq 1$ and for every $f\in C^{\infty}(X_G)^{S^1}$. First, by Theorem~\ref{thm:kfourierszego} note that
 	$ C_0(1, g)=C_0(1, g)=g$
 	for every $k$. 	Now, if we put $g=1$,{\samepage
\[
A_1= \hat{R}T^G[f]T^G[g]-\hat{R}T^G[f\cdot g]= 0,
\]
 	and hence the symbol of~$A_1$ vanishes. By induction, the claim \eqref{eq:cj} follows.}
 	
 	Let us now prove that the Berezin--Toeplitz star product fulfills parity. Considering the formal parameter to be real, $\nu =\overline{\nu}$, this is equivalent to
 	\begin{equation} \label{eq:ck}
 		\overline{C_k(f, g)}=C_k(\overline{g}, \overline{f})
 	\end{equation}
 	for $k\geq 0$. Note that, as an easy consequence of the definition of Berezin--Toeplitz star product, we have
 	$T_k^G[f]^{\dagger}=T_k^G[\overline{f}]$.
 	The star product $\overline{g}\star\overline{f}$ is given by the asymptotic expansion of the composition
 	\begin{align*}
 		T_k^G[\overline{g}]\cdot T_k^G\big[\overline{f}\big]&=T_k^G[g]^{\dag}\cdot T_k^G[f]^{\dag} =\bigl(T_k^G[{f}]\cdot T_k^G[g]\bigr)^{\dag} \\
 		&\sim \sum_{j=0}^{+\infty} k^{-j} T_k^G[C_j(f, g)]^{\dag}
 		\sim \sum_{j=0}^{+\infty} k^{-j} T_k^G\big[\overline{C_j(f, g)}\big] .
 	\end{align*}
 	Thus, the claim \eqref{eq:ck} follows.
 	
 	By \cite{gh}, see also \cite{gh2}, the trace $\mathrm{Tr}_k$ on $H(X)^G_k$ admits a complete asymptotic expansion in decreasing integer power of $k$
 	\[\mathrm{Tr}_k\bigl(T^G_k[f] \bigr)\sim k^d\left(\sum_{k=0}^{+\infty} k^{-j} \tau_j(f)\right) ,\]
 	with $\tau_j(f)\in \mathbb{C}$. It induces a $\mathbb{C}[[\nu]]$-linear map
$\mathrm{Tr} \colon {C}^{\infty}(X_G)^{S^1}[[\nu]]\rightarrow \nu^{-n} \mathbb{C}[[\nu]]$
 	such that
 	\[\mathrm{Tr}(f):= \sum_{j=0}^{+\infty}\nu^{j-n} \tau_j(f) , \]
 	where for $f \in C^{\infty}(M)$ the $\tau_j(f)$ are given by the asymptotic expansion above and for arbitrary elements by $\mathbb{C}[[\nu]]$-linear extension, $\nu = k^{-1}$.
 	Now, we shall prove
 	\begin{equation} \label{eq:tr}
 		\mathrm{Tr}(f\star g)=\mathrm{Tr}(g\star f) .
 	\end{equation}
 	
 	By $\mathbb{C}[[\nu]]$-linearity, we prove \eqref{eq:tr} for $f$ and $g$ in \smash{${C}^{\infty}(X_G)^{S^1}$}. Note that $f\star g - g \star f$ is given by the asymptotic expansion of
 	$ T^G_k[f] T^G_k[g]- T^G_k[g] T^G_k[f]$.
 	Hence the trace of $f\star g - g \star f$ is given by the expansion of
 	$\mathrm{Tr}_k\bigl( T^G_k[f] T^G_k[g]- T^G_k[g] T^G_k[f] \bigr) $.
 	But for every $k$ this vanishes and thus we get \eqref{eq:tr}.

\subsection*{Acknowledgements}
We are indebted to the referees for various interesting comments and for suggesting several improvements. The author thanks Chin-Yu Hsiao and Herrmann Hendrik for their valuable conversations. The author is a member of GNSAGA, part of the Istituto Nazionale di Alta Matematica, and expresses gratitude to the group for its support.
 	
\pdfbookmark[1]{References}{ref}
\LastPageEnding


\begin{thebibliography}{99}
\footnotesize\itemsep=0pt

\bibitem{bms}
Bordemann M., Meinrenken E., Schlichenmaier M., Toeplitz quantization of
 {K}\"ahler manifolds and~{${\rm gl}(N)$}, {$N\to\infty$} limits,
 \href{https://doi.org/10.1007/BF02099772}{\textit{Comm. Math. Phys.}}
 \textbf{165} (1994), 281--296,
 \href{http://arxiv.org/abs/hep-th/9309134}{arXiv:hep-th/9309134}.

\bibitem{boutet-guillemin}
Boutet~de Monvel L., Guillemin V., The spectral theory of {T}oeplitz operators,
 \textit{Ann. of Math. Stud.}, Vol.~99,
 \href{https://doi.org/10.1515/9781400881444}{Princeton University Press},
 Princeton, NJ, 1981.

\bibitem{bm}
Burbea J., Masani P., Banach and {H}ilbert spaces of vector-valued functions.
 Their general theory and applications to holomorphy, \textit{Res. Notes
 Math.}, Vol.~90, Pitman, Boston, MA, 1984.

\bibitem{cvt}
Carmeli C., De~Vito E., Toigo A., Vector valued reproducing kernel {H}ilbert
 spaces of integrable functions and {M}ercer theorem,
 \href{https://doi.org/10.1142/S0219530506000838}{\textit{Anal. Appl.
 (Singap.)}} \textbf{4} (2006), 377--408.

\bibitem{f}
Folland G.B., Harmonic analysis in phase space, \textit{Ann. of Math. Stud.},
 Vol.~122, \href{https://doi.org/10.1515/9781400882427}{Princeton University
 Press}, Princeton, NJ, 1989.

\bibitem{fs}
Folland G.B., Stein E.M., Estimates for the~{$\bar \partial_{b}$} complex and
 analysis on the {H}eisenberg group,
 \href{https://doi.org/10.1002/cpa.3160270403}{\textit{Comm. Pure Appl.
 Math.}} \textbf{27} (1974), 429--522.

\bibitem{gh}
Galasso A., Hsiao C.-Y., Toeplitz operators on {CR} manifolds and group actions,
 \href{https://doi.org/10.1007/s12220-022-01078-9}{\textit{J.~Geom. Anal.}}
 \textbf{33} (2023), 21, 55~pages,
 \href{http://arxiv.org/abs/2108.11061}{arXiv:2108.11061}.

\bibitem{gh2}
Galasso A., Hsiao C.-Y., Functional calculus and quantization commutes with
 reduction for {T}oeplitz operators on {CR} manifolds,
 \href{https://doi.org/10.1007/s00209-024-03561-1}{\textit{Math.~Z.}}
 \textbf{308} (2024), 5, 40~pages,
 \href{http://arxiv.org/abs/2112.11257}{arXiv:2112.11257}.

\bibitem{gs}
Guillemin V., Sternberg S., Geometric quantization and multiplicities of group
 representations, \href{https://doi.org/10.1007/BF01398934}{\textit{Invent.
 Math.}} \textbf{67} (1982), 515--538.

\bibitem{hsiao}
Hsiao C.-Y., Projections in several complex variables,
 \href{https://doi.org/10.24033/msmf.435}{\textit{M\'em. Soc. Math. Fr.}} \textbf{123} (2010), viii+136~pages,
 \href{http://arxiv.org/abs/0810.4083}{arXiv:0810.4083}.

\bibitem{hsiaohuang}
Hsiao C.-Y., Huang R.-T., {$G$}-invariant {S}zeg\H{o} kernel asymptotics and {CR}
 reduction, \href{https://doi.org/10.1007/s00526-020-01912-4}{\textit{Calc.
 Var. Partial Differential Equations}} \textbf{60} (2021), 47, 48~pages,
 \href{http://arxiv.org/abs/1702.05012}{arXiv:1702.05012}.

\bibitem{hmm}
Hsiao C.-Y., Ma X., Marinescu G., Geometric quantization on {CR} manifolds,
 \href{https://doi.org/10.1142/S0219199722500742}{\textit{Commun. Contemp.
 Math.}} \textbf{25} (2023), 2250074, 73~pages,
 \href{http://arxiv.org/abs/1906.05627}{arXiv:1906.05627}.

\bibitem{hsma}
Hsiao C.-Y., Marinescu G., Asymptotics of spectral function of lower energy
 forms and {B}ergman kernel of semi-positive and big line bundles,
 \href{https://doi.org/10.4310/CAG.2014.v22.n1.a1}{\textit{Comm. Anal. Geom.}}
 \textbf{22} (2014), 1--108,
 \href{http://arxiv.org/abs/1112.5464}{arXiv:1112.5464}.

\bibitem{hsma1}
Hsiao C.-Y., Marinescu G., Berezin--{T}oeplitz quantization for lower energy
 forms, \href{https://doi.org/10.1080/03605302.2017.1330340}{\textit{Comm.
 Partial Differential Equations}} \textbf{42} (2017), 895--942,
 \href{http://arxiv.org/abs/1411.6654}{arXiv:1411.6654}.

\bibitem{ks}
Karabegov A.V., Schlichenmaier M., Identification of {B}erezin--{T}oeplitz
 deformation quantization,
 \href{https://doi.org/10.1515/crll.2001.086}{\textit{J.~Reine Angew. Math.}}
 \textbf{540} (2001), 49--76,
 \href{http://arxiv.org/abs/math.QA/0006063}{arXiv:math.QA/0006063}.

\bibitem{k}
Kobayashi S., Transformation groups in differential geometry, \textit{Class. Math.},
 \href{https://doi.org/10.1007/978-3-642-61981-6}{Springer}, Berlin, 1995.

\bibitem{ko}
Kostant B., Quantization and unitary representations.~{I}. {P}requantization,
 in Lectures in {M}odern {A}nalysis and {A}pplications,~{III}, \textit{Lecture
 Notes in Math.}, Vol.~170,
 \href{https://doi.org/10.1007/BFb0079068}{Springer}, Berlin, 1970, 87--208.

\bibitem{l}
Landsman N.P., Mathematical topics between classical and quantum mechanics, \textit{Springer Monogr. Math.}, \href{https://doi.org/10.1007/978-1-4612-1680-3}{Springer}, New York, 1998.

\bibitem{lef}
Le~Floch Y., A brief introduction to {B}erezin--{T}oeplitz operators on compact
 {K}\"ahler manifolds, \textit{CRM Short Courses},
 \href{https://doi.org/10.1007/978-3-319-94682-5}{Springer}, Cham, 2018.

\bibitem{lee}
Lee J.M., Introduction to smooth manifolds, 2nd~ed., \textit{Grad. Texts in Math.}, Vol.~218, \href{https://doi.org/10.1007/978-1-4419-9982-5}{Springer}, New
 York, 2013.

\bibitem{ma}
Ma X., Quantization commutes with reduction, a survey,
 \href{https://doi.org/10.1007/s10473-021-0604-4}{\textit{Acta Math. Sci.
 Ser.~B (Engl. Ed.)}} \textbf{41} (2021), 1859--1872.

\bibitem{mm06}
Ma X., Marinescu G., The first coefficients of the asymptotic expansion of the
 {B}ergman kernel of the~{${\rm Spin}^c$} {D}irac operator,
 \href{https://doi.org/10.1142/S0129167X06003667}{\textit{Internat.~J. Math.}}
 \textbf{17} (2006), 737--759,
 \href{http://arxiv.org/abs/math.CV/0511395}{arXiv:math.CV/0511395}.

\bibitem{mm}
Ma X., Marinescu G., Holomorphic {M}orse inequalities and {B}ergman kernels,
 \textit{Progr. Math.}, Vol.~254,
 \href{https://doi.org/10.1007/978-3-7643-8115-8}{Birkh\"auser}, Basel, 2007.

\bibitem{mmt}
Ma X., Marinescu G., Toeplitz operators on symplectic manifolds,
 \href{https://doi.org/10.1007/s12220-008-9022-2}{\textit{J.~Geom. Anal.}}
 \textbf{18} (2008), 565--611,
 \href{http://arxiv.org/abs/0806.2370}{arXiv:0806.2370}.

\bibitem{mds}
McDuff D., Salamon D., Introduction to symplectic topology, 3rd~ed., \textit{Oxf. Grad. Texts Math.}, \href{https://doi.org/10.1093/oso/9780198794899.001.0001}{Oxford
 University Press}, Oxford, 2017.

\bibitem{14}
Meinrenken E., On {R}iemann--{R}och formulas for multiplicities,
 \href{https://doi.org/10.1090/S0894-0347-96-00197-X}{\textit{J.~Amer. Math.
 Soc.}} \textbf{9} (1996), 373--389,
 \href{http://arxiv.org/abs/alg-geom/9405014}{arXiv:alg-geom/9405014}.

\bibitem{16}
Meinrenken E., Sjamaar R., Singular reduction and quantization,
 \href{https://doi.org/10.1016/S0040-9383(98)00012-3}{\textit{Topology}}
 \textbf{38} (1999), 699--762,
 \href{http://arxiv.org/abs/dg-ga/9707023}{arXiv:dg-ga/9707023}.

\bibitem{pe}
Pedrick G., Theory of reproducing kernels of {H}ilbert spaces of vector valued
 functions, Ph.D.~Thesis, {U}niversity of Kansas,
 Lawrence, Kansas, 1957.

\bibitem{ph}
Pham H., The {L}ie group of isometries of a pseudo-{R}iemannian manifold,
 \href{https://doi.org/10.4310/amsa.2023.v8.n2.a2}{\textit{Ann. Math. Sci.
 Appl.}} \textbf{8} (2023), 223--238.

\bibitem{r}
Rawnsley J.H., Coherent states and {K}\"ahler manifolds,
 \href{https://doi.org/10.1093/qmath/28.4.403}{\textit{Quart.~J. Math. Oxford
 Ser.~(2)}} \textbf{28} (1977), 403--415.

\bibitem{rungi}
Rungi N., Pseudo-{K}\"ahler geometry of {H}itchin representations and convex
 projective structures, Ph.D.~Thesis, {SISSA}, 2023,
 available at \url{https://iris.sissa.it/handle/20.500.11767/134510}.

\bibitem{schl1}
Schlichenmaier M., Deformation quantization of compact {K}\"ahler manifolds by
 {B}erezin--{T}oeplitz quantization, in Conf\'erence {M}osh\'e {F}lato 1999,
 {V}ol.~{II} ({D}ijon), \textit{Math. Phys. Stud.}, Vol.~22,
 \href{https://doi.org/10.1007/978-94-015-1276-3_22}{Kluwer}, Dordrecht, 2000, 289--306,
 \href{http://arxiv.org/abs/math.QA/9910137}{arXiv:math.QA/9910137}.

\bibitem{schl2}
Schlichenmaier M., Berezin--{T}oeplitz quantization for compact {K}\"ahler
 manifolds. {A} review of results,
 \href{https://doi.org/10.1155/2010/927280}{\textit{Adv. Math. Phys.}}
 \textbf{2010} (2010), 927280, 38~pages,
 \href{http://arxiv.org/abs/1003.2523}{arXiv:1003.2523}.

\bibitem{sh}
Shen W.C., Semi-classical spectral asymptotics of {T}oeplitz operators for
 lower energy forms on non-degenerate compact {CR} manifolds, Ph.D.~Thesis,
 {U}niversity of Cologne, 2024.

\bibitem{tz}
Tian Y., Zhang W., An analytic proof of the geometric quantization conjecture
 of {G}uillemin--{S}ternberg,
 \href{https://doi.org/10.1007/s002220050223}{\textit{Invent. Math.}}
 \textbf{132} (1998), 229--259.

\bibitem{v}
Vergne M., Multiplicities formula for geometric quantization,~{P}art~{I},
 \href{https://doi.org/10.1215/S0012-7094-96-08206-X}{\textit{Duke Math.~J.}}
 \textbf{82} (1996), 143--179.

\bibitem{v-II}
Vergne M., Multiplicities formula for geometric quantization,~{P}art~{II},
 \href{https://doi.org/10.1215/S0012-7094-96-08207-1}{\textit{Duke Math.~J.}}
 \textbf{82} (1996), 181--194.

\bibitem{w}
Waldmann S., Convergence of star products: from examples to a general
 framework, \href{https://doi.org/10.4171/emss/31}{\textit{EMS Surv. Math.
 Sci.}} \textbf{6} (2019), 1--31,
 \href{http://arxiv.org/abs/1901.11327}{arXiv:1901.11327}.

\bibitem{z}
Zelditch S., Pluri-potential theory on {G}rauert tubes of real analytic
 {R}iemannian manifolds,~{I}, in Spectral {G}eometry, \textit{Proc. Sympos.
 Pure Math.}, Vol.~84, \href{https://doi.org/10.1090/pspum/084/1363}{American
 Mathematical Society}, Providence, RI, 2012, 299--339,
 \href{http://arxiv.org/abs/1107.0463}{arXiv:1107.0463}.

\end{thebibliography}
\end{document}